\pgfplotsset{compat=1.18}
\newcommand{\uu}{\bm{u}}
\newcommand{\vvv}{\bm{v}}
\newcommand{\ww}{\bm{w}}
\newcommand{\zz}{\bm{z}}
\newcommand{\FF}{\bm{f}}
\newcommand{\ee}{\bm{e}}
\renewcommand{\aa}{\bm{a}}
\newcommand{\ff}{\bm{f}}
\newcommand{\UU}{\bm{U}}
\newcommand{\GG}{\bm{G}}
\newcommand{\LL}{\bm{L}}
\newcommand{\VV}{\bm{V}}
\newcommand{\HH}{\bm{H}}
\newcommand{\calA}{{\mathcal A}}
\newcommand{\calB}{{\mathcal B}}
\newcommand{\vertiii}[1]{{\left\vert\kern-0.25ex\left\vert\kern-0.25ex\left\vert #1 
    \right\vert\kern-0.25ex\right\vert\kern-0.25ex\right\vert}}
\newtheorem{myth}{Theorem}
\newtheorem{myprop}{Proposition}
\newtheorem{mylemma}{Lemma}
\newtheorem{assumption}{Assumption}
\newtheorem{remark}{Remark}
\newcommand{\trinorm}[1]{{\vert\kern-0.25ex\vert\kern-0.25ex\vert #1 \vert\kern-0.25ex\vert\kern-0.25ex\vert}}
\begin{document}

\title{A review of discontinuous Galerkin time-stepping methods for wave propagation problems}

\author[$\star$]{Paola F. Antonietti}
\author[$\star$,$\diamond$]{Alberto Artoni}
\author[$\star$]{Gabriele Ciaramella}
\author[$\star$]{Ilario Mazzieri}

\affil[$\star$]{MOX-Laboratory for Modeling and Scientific Computing, Department of Mathematics, Politecnico di Milano, P.za L. da Vinci 32, 20133, Milan, Italy.}

\affil[ ]{\texttt{\{paola.antonietti, gabriele.ciaramella, ilario.mazzieri\}@polimi.it}}

\affil[$\diamond$]{Leonardo Innovation Labs \& IP, 
Via Pieragostini, 80, 16151, Genova, Italy.}

\affil[ ]{\texttt{alberto.artoni1995@gmail.com}}

\maketitle

\noindent{\bf Keywords }: discontinuous Galerkin methods, time integration, stability and convergence analysis, wave equation.

\abstract{This chapter reviews and compares discontinuous Galerkin time-stepping methods for the numerical approximation of second-order ordinary differential equations, particularly those stemming from space finite element discretization of wave propagation problems. Two formulations, tailored for second- and first-order systems of ordinary differential equations, are discussed within a generalized framework, assessing their stability, accuracy, and computational efficiency. Theoretical results are supported by various illustrative examples that validate the findings, enhancing the understanding and applicability of these methods in practical scenarios.}

\section{Introduction}

This chapter presents a detailed review and comparison of high-order discontinuous Galerkin (dG) finite element methods introduced in \cite{DalSanto2018} and \cite{AntoniettiMiglioriniMazzieri2021}, designed for the temporal integration of second-order differential problems, such as those encountered in wave propagation phenomena in acoustic, elastic, and porous media. Time integration of such problems often involves implicit and explicit finite differences, Leap-frog, Runge-Kutta, or Newmark schemes, as extensively reviewed in the literature \cite{Ve07,Bu08,QuSaSa07}. Generally, explicit time integration schemes are favored over implicit due to their computational efficiency and straightforward implementation. However, while implicit methods are unconditionally stable, they tend to be computationally expensive. Conversely, explicit methods, though computationally cheaper, are conditionally stable. The time step choice dictated by the Courant-Friedrichs-Lewy (CFL) condition can occasionally present limitations. Various strategies have been proposed to address this limitation, including local time-stepping algorithms \cite{DiGr09,GrMi13,Hochbruck2016} and the tent-pitching scheme \cite{GoScWi17}. These algorithms impose the CFL condition element-wise, allowing for an optimal choice of the time step, but they necessitate additional synchronization processes to ensure proper wavefield propagation between elements.\\

This work is concerned with implicit time integration methods based on a dG paradigm. Originally developed for the space approximation of hyperbolic problems \cite{ReedHill73,Lesaint74}, dG methods have been extended to space discretization of elliptic and parabolic equations \cite{wheeler1978elliptic,arnold1982interior,HoScSu00,CockKarnShu00, riviere2008discontinuous,HestWar,DiPiEr}. 
Concerning initial-value problems, dG methods have shown advantages over other implicit schemes, such as Johnson's method \cite{JOHNSON1993,ADJERID2011} thanks to the fact that each time slab of the solution depends only on the previous time step. Moreover, these methods readily integrate with dG space discretization techniques, enabling the discretization of both space and time dimensions through the dG paradigm. These methods offer high-order convergence of errors through refinement and can achieve stability with local CFL conditions \cite{MoRi05}. They can be categorized into structured and unstructured techniques. Structured techniques \cite{Tezduyar06} utilize a Cartesian product grid of spatial mesh and time partition, while unstructured techniques \cite{Hughes88,Idesman07} treat time as an additional dimension. Among unstructured methods, Trefftz techniques \cite{2016_KretzschmarMoiolaPerugiaSchnepp,BaGeLi17,BaCaDiSh18} and tent-pitching paradigms \cite{GoScWi17} have been proposed to improve stability. 
Space-time discontinuous Galerkin approximation of wave propagation problems have been proposed in  \cite{2009_PetersenFarhatTezaur,2021_BansalMoiolaPerugiaSchwab,2022_ShuklaVanderVegt}.
Space–time Trefftz dG methods for the wave equation have been studied in \cite{2016_KretzschmarMoiolaPerugiaSchnepp, 2018_MoiolaPerugia,2023_ImbertMoiolaStocker}. In \cite{2020_PerugiaSchoberlStockerWintersteiger}
tent pitching and Trefftz-DG method for the wave equation have been addressed. Space-time dG methods for parabolic problems on prismatic meshes have been proposed in \cite{2017_CangianiDongGeorgoulis}, whereas 
dG time stepping methods for semilinear parabolic problems have been addressed in \cite{2020_CangianiGeorgoulisSabawi}. Recently, space-time methods for Virtual Element discretization of parabolic equations have been developed in \cite{2024_GomezMascottoMoiolaPerugia,2024_GomezMasscottoPerugia}.\\

In this work we provide a comprehensive review and comparison of the dG time integration approach presented in \cite{DalSanto2018} for second-order systems and in \cite{AntoniettiMiglioriniMazzieri2021} for first-order systems of ordinary differential equations stemming from space discretization of wave propagation problems. Both methods are presented in a general framework and reviewed from the point of view of stability, accuracy, and computational efficiency. 
We also refer the reader to \cite{CiaramellaGanderMazzieri2025} for equivalence proofs between the proposed dG schemes and classical time-stepping methods
(such as Newmark schemes, general linear methods, and Runge-Kutta methods) and a thorough analysis in terms of accuracy, consistency, and computational cost.

The chapter is organized as follows. In Section~\ref{sec:model_problem} we formulate the problem and introduce the finite element setting employed for the dG formulations considered in Section~\ref{sec::dg_form_I} and \ref{sec::dg_form_II}. The stability and convergence properties are discussed in Section~\ref{sec:Convergence}, where we review in detail the \textit{a priori} error estimates. In Section~\ref{sec::AlgebraicFormulation}, we rewrite the equations into the corresponding algebraic systems, and we discuss suitable solution strategies. Finally, in Section~\ref{sec::numerical_results}, we compare the considered methods through several numerical experiments, including wave propagation in acoustic, poroelastic, and coupled acoustic-poroelastic media.\\

Throughout the chapter, we denote by $||\aa||$ the Euclidean norm of a vector $\aa \in \mathbb{R}^d$, $d\ge 1$ and by $||A||_{\infty} = \max_{i=1,\dots,m}\sum_{j=1}^n |a_{ij}|$, the $\ell^{\infty}$-norm of a matrix $A\in\mathbb{R}^{m\times n}$, $m,n\ge1$. For a given $I\subset\mathbb{R}$ and $v: I\rightarrow\mathbb{R}$ we denote by $L^p(I)$ and $H^p(I)$, $p\in\mathbb{N}_0$, the classical Lebesgue and Hilbert spaces, respectively, and endow them with the usual norms, see \cite{AdamsFournier2003}. Finally, we indicate the Lebesgue and Hilbert spaces for vector-valued functions as $\LL^p(I) = [L^p(I)]^d$ and $\HH^p(I) = [H^p(I)]^d$, $d\ge1$, respectively.

\section{Model problem and finite element settings}\label{sec:model_problem}

For $T>0$, we consider the following model problem \cite{kroopnick}: find $\uu(t) \in\HH^2(0,T]$ such that 
\begin{equation}
	\label{Eq:SecondOrderEquation}
	\begin{cases}
		M\ddot{\uu}(t) + D\dot{\uu}(t) + A\uu(t) = \ff(t) \qquad \forall\, t \in (0,T], \\
		(\uu,\dot{\uu})(0) = (\uu_0,\vvv_0), 
	\end{cases}
\end{equation}
where $M,D, A \in \mathbb{R}^{{N_h\times N_h}}$ are symmetric matrices,  $\uu_0, \vvv_0 \in \mathbb{R}^{{N_h}}$ and $\ff \in \LL^2(0,T]$.
Problem~\eqref{Eq:SecondOrderEquation} can be recast in the  first order form by introducing a variable $\vvv \in\HH^1(0,T]$ as follows:
\begin{equation}
\label{Eq:FirstOrderSystem1}
	\begin{cases}
		\dot{\uu}(t) - \vvv(t) = \boldsymbol{0} &\forall\, t\in(0,T], \\
		M\dot{\vvv}(t) + D\vvv(t) + A\uu(t) = \ff(t) &\forall\, t\in(0,T], \\
		(\uu,\vvv)(0) = (\uu_0,\vvv_0).
	\end{cases}
\end{equation}
We are mainly interested in the following cases, which we present in the form of assumption on the matrices $M,A,D$.
\begin{assumption}\label{ass:matrices}

We assume that the symmetric matrices $M,D, A \in \mathbb{R}^{{N_h\times N_h}}$ satisfy one of the following
\begin{itemize}
    \item[a)] $M,A$ are positive definite and $D=0$ (cf. acoustic wave propagation in Section~\ref{sec::acoustic_example}),
    \item[b)] $M,D$ are positive definite and $A$ is positive semidefinite (poroelastic and poroelastic-acoustic wave propagation, cf. Section~\ref{sec::poro_example}). 
\end{itemize}
\end{assumption}

To integrate in time systems \eqref{Eq:SecondOrderEquation} or \eqref{Eq:FirstOrderSystem1}, we introduce a partition of the interval $I=(0,T]$ into $N$ time-intervals $I_n = (t_{n-1},t_n]$ of length $\Delta t_n = t_n-t_{n-1}$, for $n=1,\dots,N$ with $t_0 = 0$ and $t_N = T$, cf. Figure~\ref{Fig:TimeDomain}.
%%%%%%%%%%%
%%%%%%%%%%%%%
% figure da rifare :-)
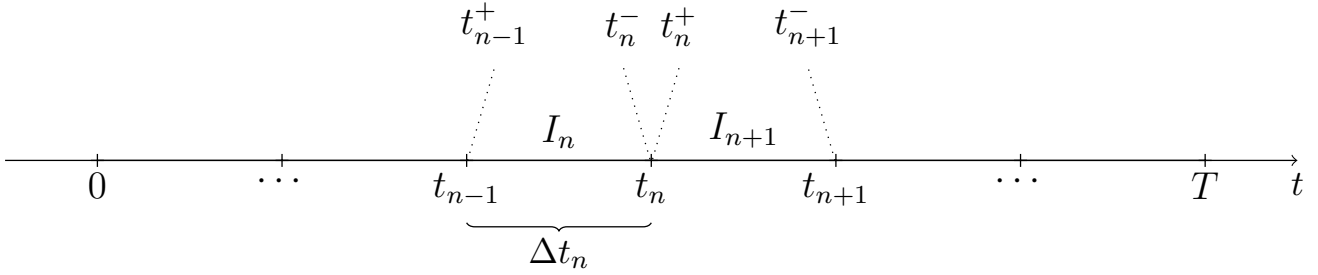
\begin{figure}[!htbp]
	\centering
	{\resizebox{1\textwidth}{!}{\definecolor{mycolor1}{rgb}{0.00000,0.49804,0.00000}%
\begin{tikzpicture}

\draw [->]  (-1,0) -- (13,0) node [below] {$t$};
\draw plot [mark=+, smooth, line width=7pt] coordinates
{(0,0) (2,0) (4,0) (6,0) (8,0) (10,0) (12,0)  };

\node [font=\large] at (0,0) [below] {0};
\node [font=\large] at (2,0) [below] {$\cdots$};
\node [font=\large] at (4,0) [below] {$t_{n-1}$};
\node [font=\large] at (5,0) [above] {$I_{n}$};
\node [font=\large] at (6,0) [below] {$t_{n}$};
\node [font=\large] at (7,0) [above] {$I_{n+1}$};
\node [font=\large] at (8,0) [below] {$t_{n+1}$};
\node [font=\large] at (10,0) [below] {$\cdots$};
\node [font=\large] at (12,0) [below] {$T$};

%\draw plot [mark=+, smooth, line width=9pt] coordinates
%{(2.5,1) (6,1) (9.5,1)  };

%\draw [dashed] (2,1) -- (10,1);
\draw [dotted] (5.7,1) -- (5.99,0);
\draw [dotted] (6.3,1) -- (6.01,0);

\draw [dotted] (4.01,0) -- (4.3,1);
\draw [dotted] (7.99,0) -- (7.7,1);

\node [font=\large] at (6.3,1.1) [above] {$t_{n}^{+}$};
\node [font=\large] at (5.7,1.1) [above] {$t_n^-$};

\node [font=\large] at (4.3,1.1) [above] {$t_{n-1}^+$};
\node [font=\large] at (7.7,1.1) [above] {$t_{n+1}^-$};

\draw[decoration={brace,mirror,raise=5pt},decorate]
  (4,-0.5) -- node[below=6pt] {$\Delta t_n$} (6,-0.5);

\end{tikzpicture}}}
	\caption{Time domain partition: values $t_n^+$ and $t_n^-$ are also reported. }\label{Fig:TimeDomain}
\end{figure}
%%%%%%%%%%%
%%%%%%%%%%%%%
Next, we incrementally build (on $n$) an approximation of the exact solution $\uu$
(and, for system \eqref{Eq:FirstOrderSystem1} also for $\vvv$) in each time slab $I_n$.  To denote the scalar product between two vectors $\ww, \zz \in \mathbb{R}^{{N_h}}$ we use the following notation
\begin{equation*}
	(\ww,\zz)_I = \int_I \ww(s)\cdot\zz(s)\text{d}s, %\quad \langle \ww,\zz \rangle_t = \uu(t)\cdot \vvv(t),
\end{equation*}
and denote by $[\ww]_n$, the jump operator of a (sufficiently regular) function $\ww$ at $t_n$ as
\begin{equation*}
	[\ww]_n = \ww(t_n^+) - \ww(t_n^-) = \ww^+ -\ww^-, \quad \text{for } n\ge 0,
\end{equation*}
where
\begin{equation*}
	\ww(t_n^\pm) = \lim_{\epsilon\rightarrow 0^\pm}\ww(t_n+\epsilon), \quad \text{for } n\ge 0.
\end{equation*}
Next, we introduce the functional spaces
\begin{equation*}
	\label{Eq:PolynomialSpace}
	V_n^{r_n} = \{ \ww:I_n\rightarrow\mathbb{R}^{N_h} \text{ s.t. }  \ww\in[\mathcal{P}^{r_n}(I_n)]^{N_h} \},
\end{equation*}
where $\mathcal{P}^{r_n}(I_n)$ is the space of polynomial defined on $I_n$ of maximum degree $r_n \geq 1$, and 
\begin{equation*}
	\label{Eq:DGSpace}
	\mathcal{V}_{dG} = \{ \ww\in\LL^2(0,T] \text{ s.t. } \ww|_{I_n} \in V_n^{r_n} \}.
\end{equation*}

\textcolor{black}{In the following sections, we will first introduce \textit{dG2}, namely the second order formulation of problem \eqref{Eq:SecondOrderEquation}. Then, we will present \textit{dG1}, namely the first-order formulation of \eqref{Eq:FirstOrderSystem1}}.

\subsection{Formulation \textit{dG2}}\label{sec::dg_form_I}
To obtain the discontinuous formulation of problem \eqref{Eq:SecondOrderEquation} we focus on the generic interval $I_n$,  we multiply the first equation in \eqref{Eq:SecondOrderEquation} by a (regular enough) test function $\dot{\ww}(t)$, integrate in time over $I_n$, and add the null terms $M[\dot{\uu}]_{n-1}\cdot\dot{\ww}(t_{n-1}^+)$ and $A[\uu]_{n-1}\cdot\ww(t_{n-1}^+)$, since $\bm u \in \bm H^2(0,T)$,  to get 
\begin{equation}
	\label{Eq:Weak1_dgI_slab}
	(M\ddot{\uu},\dot{\ww})_{I_n} + (D\dot{\uu},\dot{\ww})_{I_n} + (A\uu,\dot{\ww})_{I_n}  +  M[\dot{\uu}]_{n-1}\cdot\dot{\ww}(t_{n-1}^+) + A[\uu]_{n-1}\cdot\ww(t_{n-1}^+) = (\FF,\dot{\ww})_{I_n}.
\end{equation}
By summing over all intervals $I_n$, for $n=1,\ldots,N$ we obtain the problem: \\ \textcolor{black}{(\textit{dG2})} find $\uu_{dG} \in \mathcal{V}_{dG}$  such that
\begin{equation}\label{eq::wf_dgI}
    \mathcal{A}(\uu_{dG},\ww) = \mathcal{F}(\ww) \quad \forall \; \ww \in \mathcal{V}_{dG},
\end{equation}
where for any $\uu,\ww \in \mathcal{V}_{dG}$ it holds
\begin{align}\label{def::A_dgI}
    \mathcal{A}(\uu,\ww) & =  \sum_{n=1}^{N}(M\ddot{\uu},\dot{\ww})_{I_n} + (D\dot{\uu},\dot{\ww})_{I_n} + (A\uu,\dot{\ww})_{I_n} \nonumber\\ & + \sum_{n=1}^{N-1} M[\dot{\uu}]_{n}\cdot\dot{\ww}(t_{n}^+) + A[\uu]_{n}\cdot\ww(t_{n}^+) + M\dot{\uu}(0^+)\cdot\dot{\ww}(0^+) + A \uu(0^+)\cdot\ww(0^+), 
\end{align}
and for any $\ww \in \mathcal{V}_{dG}$ we have
\begin{align}\label{def::F_dgI}
    \mathcal{F}(\ww) & =  \sum_{n=1}^{N}(\FF,\dot{\ww})_{I_n} + M\vvv_0\cdot\dot{\ww}(0^+) + A \uu_0\cdot\ww(0^+).
\end{align}
Under the previous general assumptions on the matrices $A,M$ and $D$, the bilinear form $\mathcal{A}(\cdot,\cdot)$ induces a seminorm on the space $\mathcal{V}_{dG}$, which we denote by $| \cdot |_{\calA}$ and is defined as
\begin{align}\label{def::normA}
    | \uu |_{\calA}^2 = \calA(\uu,\uu)  =& \sum_{n=1}^N ||D\dot{\uu}||_{0,I_n}^2 + \frac{1}{2}(M^{\frac{1}{2}}\dot{\uu}(0^+))^2 + \frac{1}{2}\sum_{n=1}^{N-1}(M^{\frac{1}{2}}[\dot{\uu}]_n)^2 + \frac{1}{2}(M^{\frac{1}{2}}\dot{\uu}(T^-))^2 \nonumber \\ & + \frac{1}{2}(A^{\frac{1}{2}}{\uu}(0^+))^2 + \frac{1}{2}\sum_{n=1}^{N-1}(A^{\frac{1}{2}}[{\uu}]_n)^2 + \frac{1}{2}(A^{\frac{1}{2}}{\uu}(T^-))^2.
\end{align}
{We remark that \eqref{def::normA} is} a norm when $A,M$ and $D$ are symmetric and positive definite.

We next state the following well-posedness result for Problem~\eqref{eq::wf_dgI}. 
\begin{myth}\label{teo::wellpos_dgI}
Let Assumption~\ref{ass:matrices} holds. Then, Problem~\eqref{eq::wf_dgI} admits a unique solution.
\end{myth}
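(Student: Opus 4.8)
The plan is to use that $\mathcal{V}_{dG}$ is finite dimensional, so that \eqref{eq::wf_dgI} is a square linear system and is uniquely solvable as soon as the associated homogeneous problem has only the trivial solution. Accordingly I would take $\FF\equiv\bm 0$ and $\uu_0=\vvv_0=\bm 0$ (so that $\mathcal{F}\equiv 0$), let $\uu_{dG}\in\mathcal{V}_{dG}$ satisfy $\mathcal{A}(\uu_{dG},\ww)=0$ for all $\ww\in\mathcal{V}_{dG}$, and test with $\ww=\uu_{dG}$. By \eqref{def::normA} this gives $|\uu_{dG}|_{\mathcal{A}}^2=\mathcal{A}(\uu_{dG},\uu_{dG})=0$, hence every nonnegative summand in \eqref{def::normA} vanishes. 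Since $M$ is symmetric positive definite, $M^{\frac{1}{2}}$ is invertible, and we deduce $\dot\uu_{dG}(0^+)=\bm 0$, $[\dot\uu_{dG}]_n=\bm 0$ for $n=1,\dots,N-1$, $\dot\uu_{dG}(T^-)=\bm 0$; moreover $D\dot\uu_{dG}\equiv\bm 0$ on $(0,T)$ and $A^{\frac{1}{2}}\uu_{dG}(0^+)=\bm 0$, $A^{\frac{1}{2}}[\uu_{dG}]_n=\bm 0$, $A^{\frac{1}{2}}\uu_{dG}(T^-)=\bm 0$.

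Under Assumption~\ref{ass:matrices}(b), $D$ is positive definite, so $D\dot\uu_{dG}\equiv\bm 0$ forces $\dot\uu_{dG}\equiv\bm 0$ on each slab, i.e. $\uu_{dG}$ is piecewise constant; combined with the vanishing of the jumps $[\uu_{dG}]_n$ and of $\uu_{dG}(0^+)$ (in the $A$-seminorm) and with the weak enforcement of $\uu_{dG}(0)=\uu_0$ through the $0^+$ terms of $\mathcal{A}$ and $\mathcal{F}$, this forces $\uu_{dG}$ to be globally constant and equal to $\uu_0=\bm 0$, so $\uu_{dG}\equiv\bm 0$ in this case.

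Under Assumption~\ref{ass:matrices}(a) one has $D=0$ and $A$ positive definite, so the relations above additionally give $\uu_{dG}(0^+)=\bm 0$, $[\uu_{dG}]_n=\bm 0$, $\uu_{dG}(T^-)=\bm 0$; thus $\uu_{dG}$ and $\dot\uu_{dG}$ are continuous at every interior node and have vanishing Cauchy data at $t=0$. I would then use that $\mathcal{A}(\cdot,\cdot)$ is block lower triangular with respect to the slab decomposition: testing \eqref{eq::wf_dgI} against $\ww$ supported on a single slab $I_n$ — legitimate since $\mathcal{V}_{dG}$ contains such functions and the slabs couple only through the jumps at $t_{n-1}$, already shown to vanish — the identity collapses to $(M\ddot\uu_{dG}+A\uu_{dG},\dot\ww)_{I_n}=0$ for all $\ww\in V_n^{r_n}$. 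Since $(M\ddot\uu_{dG}+A\uu_{dG})|_{I_n}$ is a vector polynomial of degree $\le r_n$ that is $\LL^2(I_n)$-orthogonal to all polynomials of degree $\le r_n-1$, it is a scalar multiple of the degree-$r_n$ Legendre polynomial $\ell_{r_n}$ on $I_n$, so $\uu_{dG}|_{I_n}$ solves a linear second-order ODE $M\ddot\uu+A\uu=\bm c\,\ell_{r_n}$ on $I_n$ with Cauchy data prescribed at $t_{n-1}$. Arguing inductively on $n$ — the base case using the vanishing Cauchy data at $t_0$, the inductive step using $C^1$-continuity to carry it from $t_{n-1}^-$ to $t_{n-1}^+$ — one obtains $\uu_{dG}|_{I_n}=\bm 0$ on every slab, hence $\uu_{dG}\equiv\bm 0$.

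The routine part is the energy identity; the step I expect to be the real obstacle is the last one, namely deducing $\uu_{dG}|_{I_n}=\bm 0$ from vanishing Cauchy data together with Galerkin orthogonality. The cleanest argument I see is the following: because $M,A$ are symmetric positive definite, $M^{-1}A$ is similar to a symmetric positive definite matrix and hence has strictly positive eigenvalues; therefore $M\ddot\uu+A\uu=\bm 0$ admits, apart from the trivial one, only oscillatory (non-polynomial) solutions, so the polynomial $\uu_{dG}|_{I_n}$ must coincide with the unique polynomial particular solution of $M\ddot\uu+A\uu=\bm c\,\ell_{r_n}$, and requiring it to vanish together with its first derivative at $t_{n-1}$ pins down $\bm c=\bm 0$ and hence $\uu_{dG}|_{I_n}=\bm 0$. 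Equivalently, one may simply invoke the invertibility of the local ("reference-slab") \textit{dG2} stiffness matrix, which is exactly this statement in algebraic form and is the route I would take if a short self-contained argument is preferred.
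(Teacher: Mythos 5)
Your overall strategy coincides with the paper's: reduce existence to uniqueness by finite dimensionality, use the energy identity $\mathcal{A}(\uu_{dG},\uu_{dG})=|\uu_{dG}|_{\mathcal{A}}^2=0$ to annihilate all jump and trace terms, and then argue slab by slab. At the decisive point you are in fact more careful than the paper: localizing the test function to $I_n$ only yields $(M\ddot{\uu}_{dG}+A\uu_{dG},\dot{\ww})_{I_n}=0$ with $\dot{\ww}$ of degree at most $r_n-1$, so the residual is merely $L^2(I_n)$-orthogonal to $\mathcal{P}^{r_n-1}(I_n)$ and equals $\bm c\,\ell_{r_n}$; the paper instead tests with $\dot{\ww}$ equal to the full residual, which has degree $r_n$ (because of the $A\uu_{dG}$ term) and is therefore not the derivative of an admissible test function.

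However, your argument does not close. The crux in case (a) is the claim that the unique polynomial $P_{\bm c}$ of degree $\le r_n$ solving $M\ddot P+AP=\bm c\,\ell_{r_n}$ with $P(t_{n-1})=\dot P(t_{n-1})=\bm 0$ must have $\bm c=\bm 0$. The observation that the homogeneous solutions are oscillatory and non-polynomial only shows that $\bm c\mapsto P_{\bm c}$ is a well-defined injective linear map on polynomials; it says nothing about whether the $2N_h$ homogeneous conditions $P_{\bm c}(t_{n-1})=\dot P_{\bm c}(t_{n-1})=\bm 0$ admit a nonzero $\bm c$. That question is precisely the invertibility of the local \textit{dG2} matrix, so your fallback of ``invoking'' that invertibility is circular. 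The claim is true but needs an argument, for instance: multiplying the equation by $\dot P$ and integrating over $I_n$ gives zero on the right (since $\deg\dot P\le r_n-1\perp\ell_{r_n}$), hence $P(t_n)=\dot P(t_n)=\bm 0$ as well; then multiplying by $(t-\bar t_n)\dot P$, with $\bar t_n$ the midpoint of $I_n$, and integrating by parts yields
\begin{equation*}
-\tfrac12\int_{I_n}\Bigl(\|M^{\frac12}\dot P\|^2+\|A^{\frac12}P\|^2\Bigr)\,dt \;=\; r_n\,\|\ell_{r_n}\|_{L^2(I_n)}^2\,\|A^{-\frac12}\bm c\|^2\;\ge\;0,
\end{equation*}
forcing $P\equiv\bm 0$ and $\bm c=\bm 0$. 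Separately, your case (b) does not follow from the mechanism you describe: there the energy identity and the $0^+$ terms only control $A^{\frac12}\uu_{dG}(0^+)$ and $A^{\frac12}[\uu_{dG}]_n$, and since $A$ is merely positive semidefinite this does not pin down the piecewise-constant $\uu_{dG}$ (indeed any piecewise constant taking values in $\ker A$ lies in the kernel of $\mathcal{A}$ — a degeneracy that the paper's own proof also passes over in silence).
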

\begin{proof}
 Due to the linearity of the problem \eqref{eq::wf_dgI}, it is enough to consider the data $\FF = \vvv_0 = \uu =  \bm 0$ so that $\mathcal{F}(\ww) = 0$ for all $\ww \in \mathcal{V}_{dG}$, and prove that $\uu_{dG} = \bm 0$.
By choosing $\ww = \uu_{dG}$, from \eqref{def::normA} we obtain $| \uu_{dG}|_{\calA} = 0$, i.e. all jumps and traces 
present in \eqref{def::normA} are null, namely $\dot{\uu}(0^+)=[\dot{\uu}]_n = \dot{\uu}(T^-) = A^{\frac{1}{2}}{\uu}(0^+) = A^{\frac{1}{2}}[{\uu}]_n = A^{\frac{1}{2}}{\uu}(T^-) = \bm 0 $. Then, \eqref{eq::wf_dgI} becomes
\begin{equation*}
    \sum_{n=1}^{N}(M\ddot{\uu}_{dG} + D\dot{\uu}_{dG} + A\uu_{dG},\dot{\ww})_{I_n}  = \bm  0.
\end{equation*}
Choosing $\dot{\ww} = M\ddot{\uu}_{dG} + D\dot{\uu}_{dG} + A\uu_{dG}$ we can deduce that $\uu_{dG}$ is solution  of the differential problem  
\begin{equation}\label{eq:diff_prob_I}
M\ddot{\uu}_{dG} + D\dot{\uu}_{dG} + A\uu_{dG}  = 0    
\end{equation}
for any $I_n$. For the first time-slab, $\uu_{dG}$ is a polynomial solution (or an analytical solution) of \eqref{eq:diff_prob_I} with homogeneous initial conditions, so it vanishes in $I_1$. Iterating over the time slabs and using that the jumps at time instants $t_n$ vanish, we deduce that $\uu_{dG}$ is zero in the whole time interval $[0,T)$. Existence of the solution follows from linearity and finite dimensionality.
\end{proof}
%\todo{Attenzione, il conto prima di (8) mi pare abbia qualcosa di strano.}

\subsection{Formulation \textit{dG1}}\label{sec::dg_form_II}
Proceeding similarly as in Section~\ref{sec::dg_form_I}, we can obtain the weak formulation for problem \eqref{Eq:FirstOrderSystem1}. Indeed, focusing on $I_n$,  we multiply the first (resp. second) equation in \eqref{Eq:FirstOrderSystem1} by a (regular enough) test function $\ww(t)$ (resp. $\zz(t)$), integrate in time over $I_n$ and add the null terms $A[\uu]_{n-1}\cdot\ww(t_{n-1}^+)$ and $M[\vvv]_{n-1}\cdot\zz(t_{n-1}^+)$ as follows  
\begin{equation}\label{Eq:Weak1_dgII_slab}
\begin{cases}
(\dot{\uu},\ww)_{I_n} - (\vvv,\ww)_{I_n} + [\uu]_{n-1}\cdot\ww(t_{n-1}^+) = 0, & \\
(M\dot{\vvv},\zz)_{I_n} + (D\vvv,\zz)_{I_n} + (A\uu,\zz)_{I_n}  +  M[\vvv]_{n-1}\cdot\zz(t_{n-1}^+) = (\FF,\zz)_{I_n}, & \\ 
\end{cases}
\end{equation}
since $\bm u, \bm v \in H^1(0,T)$.
Summing up over all time intervals we obtain the problem: \\ \textcolor{black}{(\textit{dG1})} find $(\uu_{dG}, \vvv_{dG}) \in \mathcal{V}_{dG} \times \mathcal{V}_{dG}$  such that
\begin{equation}\label{eq::wf_dgII}
    \mathcal{B}((\uu_{dG},\vvv_{dG}),(\ww,\zz)) = \mathcal{G}((\ww,\zz)) \quad \forall \; (\ww,\zz) \in \mathcal{V}_{dG}\times \mathcal{V}_{dG},
\end{equation}
where for any $(\uu,\vvv),(\ww,\zz) \in \mathcal{V}_{dG}\times \mathcal{V}_{dG}$ it holds
\begin{align}\label{def::A_dgII}
    \mathcal{B}((\uu,\vvv),(\ww,\zz)) & =  \sum_{n=1}^{N}(\dot{\uu},\ww)_{I_n} - (\vvv,\ww)_{I_n} + \sum_{n=1}^{N-1} [\uu]_{n}\cdot\ww(t_{n}^+) +  \uu(0^+)\cdot\ww(0^+)\nonumber\\ & + \sum_{n=1}^{N}(M\dot{\vvv},\zz)_{I_n} + (D\vvv,\zz)_{I_n} + (A\uu,\zz)_{I_n}   + \sum_{n=1}^{N-1}   M[\vvv]_{n}\cdot\zz(t_{n}^+) \nonumber \\& + M\vvv(0^+)\cdot\zz(0^+) , 
\end{align}
and for any $(\ww,\zz) \in \mathcal{V}_{dG}\times \mathcal{V}_{dG}$ we have
\begin{align}\label{def::F_dgII}
    \mathcal{G}((\ww,\zz)) & =  \sum_{n=1}^{N}(\FF,\zz)_{I_n} + M\vvv_0\cdot\zz(0^+) +  \uu_0\cdot\ww(0^+). 
\end{align}
{Under Assumption~\ref{ass:matrices} on the matrices $A,M$ and $D$,}  the bilinear form $\mathcal{B}(\cdot,\cdot)$ induces a seminorm on the space $\mathcal{V}_{dG}\times\mathcal{V}_{dG}$ which we denote by $| \cdot |_{\calB}$ and it is defined as
\begin{align}\label{def::normB}
    | (\uu,\vvv) |_{\calB}^2 & = \calB((\uu,\vvv),(A\uu,\vvv)) \nonumber \\ & = \sum_{n=1}^N ||D\vvv||_{0,I_n}^2 + \frac{1}{2}(M^{\frac{1}{2}}\vvv(0^+))^2 + \frac{1}{2}\sum_{n=1}^{N-1}(M^{\frac{1}{2}}[\vvv]_n)^2 + \frac{1}{2}(M^{\frac{1}{2}}\vvv(T^-))^2 \nonumber \\ & + \frac{1}{2}(A^{\frac{1}{2}}{\uu}(0^+))^2 + \frac{1}{2}\sum_{n=1}^{N-1}(A^{\frac{1}{2}}[{\uu}]_n)^2 + \frac{1}{2}(A^{\frac{1}{2}}{\uu}(T^-))^2.
\end{align}
Notice that, in contrast to the case $| \cdot |^2_{\mathcal{A}}$, the positivity of the matrices $A,M$ and $D$ does not imply $| \cdot |^2_{\mathcal{B}}$ to be a norm.
\begin{remark}
    In the literature, Formulation~\eqref{eq::wf_dgII} is typically presented in the following form
    \begin{equation*}
    \begin{cases}
(A\dot{\uu},\ww)_{I_n} - (A\vvv,\ww)_{I_n} + A[\uu]_{n-1}\cdot\ww(t_{n-1}^+) = 0, & \\
(M\dot{\vvv},\zz)_{I_n} + (D\vvv,\zz)_{I_n} + (A\uu,\zz)_{I_n}  +  M[\vvv]_{n-1}\cdot\zz(t_{n-1}^+) = (\FF,\zz)_{I_n}, & \\ 
\end{cases}
\end{equation*}
see, e.g., \cite{HULBERT1990,AntoniettiMiglioriniMazzieri2021,rezaei2023discontinuous}, in which the first equation is multiplied by the matrix $A$. This is equivalent to \eqref{eq::wf_dgII} only if $A$ is positive definite. This is not verified for instance in the Biot model of poroelasticity, see Section~\ref{sec::poro_example}.
\end{remark}

Next, we recall the following result, for which the proof closely follows the arguments of the proof of Theorem~\ref{teo::wellpos_dgI}, which we omit for the sake of presentation. We refer to  \cite{AntoniettiMiglioriniMazzieri2021} for more details.
\begin{myth}[Well posedness of \eqref{eq::wf_dgII}]
Let Assumption~\ref{ass:matrices} holds.  Then,
Problem~\eqref{eq::wf_dgII} admits a unique solution.
\end{myth}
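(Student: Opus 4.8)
The plan is to follow the scheme of the proof of Theorem~\ref{teo::wellpos_dgI}. Since the trial and test space $\mathcal{V}_{dG}\times\mathcal{V}_{dG}$ is finite-dimensional and Problem~\eqref{eq::wf_dgII} is a square linear system, existence is equivalent to uniqueness, so it suffices to show that the homogeneous data $\FF=\uu_0=\vvv_0=\bm{0}$ (whence $\mathcal{G}\equiv 0$) force $(\uu_{dG},\vvv_{dG})=(\bm{0},\bm{0})$. The first step is to insert $(\ww,\zz)=(A\uu_{dG},\vvv_{dG})$ into \eqref{eq::wf_dgII} — this is admissible because $A\uu_{dG}\in\mathcal{V}_{dG}$, $A$ being a constant matrix — so that, by the very definition \eqref{def::normB}, $|(\uu_{dG},\vvv_{dG})|_{\mathcal{B}}^2=\mathcal{B}((\uu_{dG},\vvv_{dG}),(A\uu_{dG},\vvv_{dG}))=\mathcal{G}((A\uu_{dG},\vvv_{dG}))=0$. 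Reading off the (nonnegative) terms in \eqref{def::normB} and using that $M$ is positive definite, one obtains $D\vvv_{dG}=\bm{0}$ on every $I_n$, $\vvv_{dG}(0^+)=[\vvv_{dG}]_n=\vvv_{dG}(T^-)=\bm{0}$, and $A^{1/2}\uu_{dG}(0^+)=A^{1/2}[\uu_{dG}]_n=A^{1/2}\uu_{dG}(T^-)=\bm{0}$.

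The main obstacle — already present for $|\cdot|_{\mathcal{A}}$ in Theorem~\ref{teo::wellpos_dgI}, and flagged explicitly in the text for $|\cdot|_{\mathcal{B}}$ — is that $|\cdot|_{\mathcal{B}}$ is only a seminorm: its vanishing controls inter-slab jumps and endpoint traces but does not by itself control $\uu_{dG}$ on the slab interiors, and in case (b) of Assumption~\ref{ass:matrices} it only yields $[\uu_{dG}]_n,\uu_{dG}(T^-)\in\ker A$ rather than their vanishing. The way to close the argument is to reinject this information into the equations, slab by slab, and invoke uniqueness for linear ODEs, treating the two cases of Assumption~\ref{ass:matrices} separately.

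In case (a) ($D=\bm{0}$, $M,A$ positive definite) one has $\ker A=\{\bm{0}\}$, so $\uu_{dG}$ and $\vvv_{dG}$ have no jump at $t_1,\dots,t_{N-1}$ and vanish at $t=0$. Testing \eqref{Eq:Weak1_dgII_slab} on a single slab $I_n$, first with $\zz=\bm{0}$ and then with $\ww=\bm{0}$: the jump terms drop (by the continuity and the homogeneous initial data just obtained), and since each resulting integrand is a polynomial of degree $\le r_n$ that is $L^2(I_n)$-orthogonal to all of $V_n^{r_n}$, it must vanish; this gives $\dot\uu_{dG}=\vvv_{dG}$ and $M\dot\vvv_{dG}+D\vvv_{dG}+A\uu_{dG}=\bm{0}$ pointwise on $I_n$. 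Hence $\uu_{dG}$ is a polynomial solution of $M\ddot\uu_{dG}+D\dot\uu_{dG}+A\uu_{dG}=\bm{0}$ on $I_1$ with $\uu_{dG}(0)=\dot\uu_{dG}(0)=\bm{0}$, and since the zero function is the unique solution of this linear ODE with such initial data, $\uu_{dG}\equiv\bm{0}$ on $I_1$; because $\uu_{dG}$ and $\vvv_{dG}$ have no jump at $t_1$, the homogeneous initial conditions carry over to $I_2$, and iterating over the slabs yields $\uu_{dG}\equiv\bm{0}$ on $[0,T]$ and $\vvv_{dG}=\dot\uu_{dG}\equiv\bm{0}$.

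In case (b) ($M,D$ positive definite, $A$ positive semidefinite) the relation $D\vvv_{dG}=\bm{0}$ on each $I_n$, together with the invertibility of $D$, already gives $\vvv_{dG}\equiv\bm{0}$ on $(0,T]$. Substituting $\vvv_{dG}=\bm{0}$, the first equation of \eqref{eq::wf_dgII} reduces to the classical dG time-stepping scheme for $\dot\uu_{dG}=\bm{0}$ with incoming value $\uu_{dG}(t_{n-1}^-)$ (and $\uu_{dG}(t_0^-)=\uu_0=\bm{0}$), whose unique solution is $\uu_{dG}\equiv\bm{0}$: slab by slab this follows by taking the test function equal first to $\uu_{dG}$ (which forces both endpoint traces of $\uu_{dG}$ on $I_n$ to vanish) and then to $\dot\uu_{dG}$ (which then forces $\dot\uu_{dG}=\bm{0}$, hence $\uu_{dG}$ constant and therefore zero). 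In both cases $(\uu_{dG},\vvv_{dG})=(\bm{0},\bm{0})$, which proves uniqueness and, by the square-linear-system argument, existence as well; we refer to \cite{AntoniettiMiglioriniMazzieri2021} for the full details.
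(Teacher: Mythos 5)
Your proof is correct and follows essentially the route the paper itself indicates (it omits the argument, deferring to the proof of Theorem~\ref{teo::wellpos_dgI} and to \cite{AntoniettiMiglioriniMazzieri2021}): test with $(A\uu_{dG},\vvv_{dG})$ to kill the seminorm $|\cdot|_{\mathcal{B}}$, then recover the pointwise ODEs slab by slab and invoke uniqueness for linear initial-value problems. Your explicit case split for Assumption~\ref{ass:matrices}~b) --- where $A$ is only semidefinite and one must instead use the positive definiteness of $D$ to get $\vvv_{dG}\equiv\bm{0}$ before treating $\uu_{dG}$ --- is exactly the detail the paper glosses over, and you handle it correctly.
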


\section{Stability and convergence analysis}\label{sec:Convergence}

In this section, we report for both formulations in Section~\ref{sec::dg_form_I} and Section~\ref{sec::dg_form_II} the main stability results and use the latter to prove  error estimate for the numerical error, in the energy seminorms \eqref{def::normA} and  \eqref{def::normB}, respectively.
 
\subsection{Formulation \textit{dG2}}

In the following, we report the main stability and convergence results for the \textit{dG2} formulation. 
We refer the reader to \cite{DalSanto2018,AnMaMi20} for the details.

\begin{myprop}[\textit{Stability of dG2}]
{Let Assumption~\ref{ass:matrices} holds.}	Let $\ff \in \LL^2(0,T]$, $\uu_0, \vvv_0 \in \mathbb{R}^{{N_h}}$, and let $\uu_{dG} \in \mathcal{V}_{dG}$ be the solution of \eqref{eq::wf_dgI}, then it holds
	\begin{equation}
		\label{eq::stability_dgI}
		|\uu_{dG}|_{\calA}^2 \lesssim \sum_{n=1}^N ||D^{-\frac{1}{2}}\ff||_{0,I_n}^2+(A^{\frac{1}{2}}\uu_0)^2+(M^{\frac{1}{2}}\vvv_0)^2.
	\end{equation}
\end{myprop}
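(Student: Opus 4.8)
The plan is to test the variational equation \eqref{eq::wf_dgI} with $\ww = \uu_{dG}$ so that the left-hand side becomes exactly $|\uu_{dG}|_{\calA}^2$, as recorded in \eqref{def::normA}, and then bound the right-hand side $\calF(\uu_{dG})$ from above. Concretely, I would write
\[
|\uu_{dG}|_{\calA}^2 = \calF(\uu_{dG}) = \sum_{n=1}^N (\FF,\dot{\uu}_{dG})_{I_n} + M\vvv_0\cdot\dot{\uu}_{dG}(0^+) + A\uu_0\cdot\uu_{dG}(0^+),
\]
and control each of the three groups of terms separately, using only quantities that appear (squared) inside $|\uu_{dG}|_{\calA}^2$ itself, so that the resulting estimate can be absorbed.

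For the forcing term, on each slab $I_n$ I would insert $D^{1/2}D^{-1/2}$ and use Cauchy--Schwarz in time followed by Young's inequality:
\[
(\FF,\dot{\uu}_{dG})_{I_n} = (D^{-1/2}\FF, D^{1/2}\dot{\uu}_{dG})_{I_n} \le \tfrac{1}{2\varepsilon}\|D^{-1/2}\FF\|_{0,I_n}^2 + \tfrac{\varepsilon}{2}\|D^{1/2}\dot{\uu}_{dG}\|_{0,I_n}^2,
\]
and note $\|D^{1/2}\dot\uu_{dG}\|_{0,I_n}^2 \le \|D\dot\uu_{dG}\|_{0,I_n}^2$ up to a factor depending on $\|D^{1/2}\|$ (or, more cleanly, keep $\|D^{1/2}\dot\uu_{dG}\|_{0,I_n}$ and observe that $\sum_n \|D\dot\uu_{dG}\|_{0,I_n}^2$ already sits in $|\uu_{dG}|_{\calA}^2$, so one bounds $\|D^{1/2}\dot\uu_{dG}\|^2 \le \|D^{-1/2}\|^2\|D\dot\uu_{dG}\|^2$ when $D$ is invertible). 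For the two initial-data terms I would use Cauchy--Schwarz in $\mathbb{R}^{N_h}$ together with Young's inequality, writing $M\vvv_0\cdot\dot\uu_{dG}(0^+) = (M^{1/2}\vvv_0)\cdot(M^{1/2}\dot\uu_{dG}(0^+)) \le \tfrac{1}{2\varepsilon}(M^{1/2}\vvv_0)^2 + \tfrac{\varepsilon}{2}(M^{1/2}\dot\uu_{dG}(0^+))^2$ and analogously $A\uu_0\cdot\uu_{dG}(0^+) \le \tfrac{1}{2\varepsilon}(A^{1/2}\uu_0)^2 + \tfrac{\varepsilon}{2}(A^{1/2}\uu_{dG}(0^+))^2$. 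Choosing $\varepsilon$ small enough, the terms $\tfrac{\varepsilon}{2}(M^{1/2}\dot\uu_{dG}(0^+))^2$, $\tfrac{\varepsilon}{2}(A^{1/2}\uu_{dG}(0^+))^2$ and (a multiple of) $\sum_n\|D\dot\uu_{dG}\|_{0,I_n}^2$ on the right are absorbed into the left-hand side, since each appears there with a fixed positive coefficient; what survives on the right is exactly $\sum_n\|D^{-1/2}\FF\|_{0,I_n}^2 + (A^{1/2}\uu_0)^2 + (M^{1/2}\vvv_0)^2$ up to a constant, giving \eqref{eq::stability_dgI}.

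The main obstacle is the forcing term under case (a) of Assumption~\ref{ass:matrices}, where $D = 0$ and $\|D^{-1/2}\FF\|$ is meaningless; there the inequality \eqref{eq::stability_dgI} must be interpreted with the convention that the $D$-term is dropped and $\FF$ is assumed to vanish (or one needs a different argument pairing $\FF$ against $A^{1/2}\uu_{dG}$ via a Poincaré-type estimate in time). I would therefore split the discussion: in case (b), $D$ positive definite makes $\|D^{-1/2}\FF\|$ well-defined and the absorption argument above goes through verbatim; in case (a), I would handle the forcing separately — e.g. integrate $(\FF,\dot\uu_{dG})_{I_n}$ by parts in time to move the derivative onto $\FF$, or invoke the stability bound already established in the cited references \cite{DalSanto2018,AnMaMi20} for that case — and note that the hidden constant in $\lesssim$ depends on $T$, on $\|D^{1/2}\|$ or $\|D^{-1/2}\|$, and on the fixed absorption parameter, but not on $\Delta t_n$, $N$, or the polynomial degrees $r_n$. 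Finally I would remark that, because $|\cdot|_{\calA}$ is only a seminorm in case (b), the estimate controls the dissipation and jump/trace quantities but not $\uu_{dG}$ in a full norm — consistent with the statement as written.
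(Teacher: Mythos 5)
Your argument is correct and is essentially the standard energy argument that the cited references \cite{DalSanto2018,AnMaMi20} carry out; the paper itself gives no proof here beyond that citation. Testing \eqref{eq::wf_dgI} with $\ww=\uu_{dG}$ turns the left-hand side into $|\uu_{dG}|_{\calA}^2$ by \eqref{def::normA}, and your Cauchy--Schwarz/Young treatment of the three terms in $\calF(\uu_{dG})$, followed by absorption of the $\varepsilon$-weighted traces and of the damping term into the left-hand side, is exactly the intended mechanism. Two small points in your favour: first, you correctly noticed that the first term in \eqref{def::normA} is written as $\|D\dot{\uu}\|_{0,I_n}^2$ while $\calA(\uu,\uu)$ actually produces $(D\dot{\uu},\dot{\uu})_{I_n}=\|D^{1/2}\dot{\uu}\|_{0,I_n}^2$; either reading only changes the hidden constant through the spectrum of $D$, as you observe. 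Second, and more substantively, you correctly flagged that the estimate as stated is vacuous or ill-posed when $D=0$ (case a) of Assumption~\ref{ass:matrices}), since $\|D^{-1/2}\ff\|$ is then undefined and the damping term on the left vanishes, so nothing absorbs the forcing. The paper only acknowledges this indirectly, in the remark following Theorem~\ref{thm::conv_dgI}, where the undamped case is restricted to $r_n=1,2$ and handled by a separate argument; your proposal to treat that case by a different pairing of $\ff$ (or by deferring to the references) is the honest resolution. No genuine gap.
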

\begin{proof}
See \cite{DalSanto2018,AnMaMi20}.
\end{proof}

\begin{myth}[\textit{Convergence of dG2}]\label{thm::conv_dgI}
Let Assumption~\ref{ass:matrices} holds. Let $\uu$ be the solution of \eqref{Eq:SecondOrderEquation} and let $\uu_{dG} \in \mathcal{V}_{dG}$ be its dG finite element approximation based on employing formulation \eqref{eq::wf_dgI}. If $\uu_{|_{I_n}} \in \bm H^{s_n} (I_n)$, for any $n = 1, \ldots, N$, with $s_n \geq 2$,
then
\begin{equation}\label{eq::error_dgI}
    | \uu - \uu_{dG} |_{\calA} \lesssim \sum_{n=1}^N \frac{\Delta t_n^{\mu_n-\frac12}}{r_n^{s_n-3}} \| \uu \|_{\bm H^{\mu_n+1}(I_n)},   
\end{equation}
where $\mu_n = min(r_n, s_n )$ for any $n = 1, \ldots, N$, and where the hidden constant depends on the norm of
the matrices $D$ and $A$.
\end{myth}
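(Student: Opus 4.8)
The plan is to follow the classical Galerkin-orthogonality-plus-interpolation strategy, adapted to the $|\cdot|_{\calA}$ seminorm and to the specific $hp$ structure of the estimate \eqref{eq::error_dgI}. First I would establish that the continuous solution $\uu$ of \eqref{Eq:SecondOrderEquation} satisfies the same variational identity \eqref{eq::wf_dgI} as $\uu_{dG}$: this is essentially the construction of $\calA(\cdot,\cdot)$ in Section~\ref{sec::dg_form_I}, where the added jump terms vanish because $\uu \in \bm H^2(0,T)$. This gives Galerkin orthogonality $\calA(\uu - \uu_{dG}, \ww) = 0$ for all $\ww \in \mathcal{V}_{dG}$. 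Then I would split the error as $\uu - \uu_{dG} = (\uu - \Pi \uu) + (\Pi \uu - \uu_{dG}) =: \boldsymbol{\eta} + \boldsymbol{\xi}$, where $\Pi$ is a suitable projector onto $\mathcal{V}_{dG}$ acting slab-wise — the natural choice here is the $L^2$-projection onto $[\mathcal{P}^{r_n}(I_n)]^{N_h}$, or a projection tuned to reproduce endpoint values so as to kill some jump contributions. Since $\boldsymbol{\xi} \in \mathcal{V}_{dG}$, orthogonality yields $|\boldsymbol{\xi}|_{\calA}^2 = \calA(\boldsymbol{\xi},\boldsymbol{\xi}) = \calA(\uu - \uu_{dG}, \boldsymbol{\xi}) - \calA(\boldsymbol{\eta},\boldsymbol{\xi}) = -\calA(\boldsymbol{\eta},\boldsymbol{\xi})$, so it remains to bound $|\calA(\boldsymbol{\eta},\boldsymbol{\xi})|$ by $C\,|\boldsymbol{\xi}|_{\calA}$ times the claimed right-hand side and then use a triangle inequality $|\uu - \uu_{dG}|_{\calA} \le |\boldsymbol{\eta}|_{\calA} + |\boldsymbol{\xi}|_{\calA}$.

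The core of the argument is therefore the continuity-type bound on $\calA(\boldsymbol{\eta},\boldsymbol{\xi})$. I would expand $\calA(\boldsymbol{\eta},\boldsymbol{\xi})$ term by term following \eqref{def::A_dgI}: the volume terms $\sum_n (M\ddot{\boldsymbol{\eta}},\dot{\boldsymbol{\xi}})_{I_n}$, $(D\dot{\boldsymbol{\eta}},\dot{\boldsymbol{\xi}})_{I_n}$, $(A\boldsymbol{\eta},\dot{\boldsymbol{\xi}})_{I_n}$, the jump terms $M[\dot{\boldsymbol{\eta}}]_n\cdot\dot{\boldsymbol{\xi}}(t_n^+)$ and $A[\boldsymbol{\eta}]_n\cdot\boldsymbol{\xi}(t_n^+)$, and the initial terms. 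On those terms that are already controlled by $|\boldsymbol{\xi}|_{\calA}$ directly — the $D$-volume term, the $M$- and $A$-jump terms, and the endpoint terms — I would apply Cauchy–Schwarz to get a factor $|\boldsymbol{\xi}|_{\calA}$ times an $\boldsymbol{\eta}$-dependent quantity. The $M\ddot{\boldsymbol{\eta}}$ term is the delicate one since $|\boldsymbol{\xi}|_{\calA}$ does not control $\|\dot{\boldsymbol{\xi}}\|_{0,I_n}$; here the standard trick (as in \cite{DalSanto2018,AnMaMi20}) is to integrate by parts in time on each slab, $ (M\ddot{\boldsymbol{\eta}},\dot{\boldsymbol{\xi}})_{I_n} = -(M\dot{\boldsymbol{\eta}},\ddot{\boldsymbol{\xi}})_{I_n} + [M\dot{\boldsymbol{\eta}}\cdot\dot{\boldsymbol{\xi}}]_{t_{n-1}^+}^{t_n^-}$, and then use an inverse inequality $\|\ddot{\boldsymbol{\xi}}\|_{0,I_n} \lesssim r_n^2 \Delta t_n^{-1}\|\dot{\boldsymbol{\xi}}\|_{0,I_n}$ combined with the boundary terms; alternatively one chooses $\Pi$ so that $\dot{\boldsymbol{\eta}}$ vanishes at slab endpoints, or projects $\dot{\uu}$ rather than $\uu$. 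Similarly the $(A\boldsymbol{\eta},\dot{\boldsymbol{\xi}})_{I_n}$ term needs integration by parts to move the derivative onto $\boldsymbol{\eta}$. Each resulting $\boldsymbol{\eta}$-factor is then estimated by the $hp$ polynomial-approximation results on an interval — the Babuška–Suri / Schwab-type bounds giving $\|\boldsymbol{\eta}\|_{0,I_n} \lesssim (\Delta t_n^{\mu_n+1}/r_n^{s_n})\|\uu\|_{\bm H^{s_n}(I_n)}$ for the $L^2$ projection error, with analogous estimates for $\dot{\boldsymbol{\eta}}$, $\ddot{\boldsymbol{\eta}}$, and for the pointwise endpoint values via a trace/inverse inequality on $I_n$ — and bookkeeping the powers of $\Delta t_n$ and $r_n$ then produces exactly the exponents $\mu_n - \tfrac12$ and $s_n - 3$ in \eqref{eq::error_dgI}.

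Finally I would check that $|\boldsymbol{\eta}|_{\calA}$ itself is dominated by the same right-hand side: its $D$-volume contribution is $\|D\dot{\boldsymbol{\eta}}\|_{0,I_n}$, its jump/endpoint contributions are controlled by pointwise interpolation bounds, and all of these are of the same or higher order in $\Delta t_n$ than the leading term, so they are absorbed. Collecting everything, dividing through by $|\boldsymbol{\xi}|_{\calA}$, and summing over $n$ gives \eqref{eq::error_dgI}, with the hidden constant depending on $\|D\|$ and $\|A\|$ (and on the shape-regularity-type assumptions implicit in the inverse inequalities) as stated. I expect the main obstacle to be the careful treatment of the $(M\ddot{\boldsymbol{\eta}},\dot{\boldsymbol{\xi}})_{I_n}$ term — in particular, tracking the interplay of the inverse inequality in $r_n$ against the approximation rate, which is precisely why the exponent of $r_n$ in the denominator is $s_n - 3$ rather than the $s_n - 1$ one might naively expect, and why the hypothesis $s_n \ge 2$ (so that $\uu \in \bm H^{s_n} \subset \bm H^2$ and $\ddot{\uu}$ is well defined slab-wise) is needed; the remaining estimates are routine $hp$ approximation theory.
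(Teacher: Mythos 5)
The paper does not actually prove Theorem~\ref{thm::conv_dgI}; it defers entirely to \cite{DalSanto2018}. Your skeleton (Galerkin orthogonality, splitting $\uu-\uu_{dG}=\boldsymbol{\eta}+\boldsymbol{\xi}$ with a slab-wise projector, $|\boldsymbol{\xi}|_{\calA}^2=-\calA(\boldsymbol{\eta},\boldsymbol{\xi})$, then $hp$ approximation bounds) is indeed the strategy used in that reference, and your identification of the projector of Lemma~\ref{Le:Projector} type (right-endpoint matching plus orthogonality to $\mathcal{P}^{r_n-1}$, so that the volume terms tested against $\dot{\boldsymbol{\xi}}\in[\mathcal{P}^{r_n-1}(I_n)]^{N_h}$ drop out and the jump terms telescope) as the ``tuned'' alternative is the right one; a plain $L^2$ projection would not make the argument close cleanly.

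There is, however, one genuine gap in your plan as written: the step ``bound $|\calA(\boldsymbol{\eta},\boldsymbol{\xi})|$ by $C(\boldsymbol{\eta})\,|\boldsymbol{\xi}|_{\calA}$'' cannot be carried out the way you describe for the terms $(M\ddot{\boldsymbol{\eta}},\dot{\boldsymbol{\xi}})_{I_n}$ and $(A\boldsymbol{\eta},\dot{\boldsymbol{\xi}})_{I_n}$, because the seminorm \eqref{def::normA} contains \emph{no} $L^2(I_n)$ control of $\dot{\boldsymbol{\xi}}$ except through the $D$-volume term. Your proposed fix --- integrate by parts and invoke an inverse inequality $\|\ddot{\boldsymbol{\xi}}\|_{0,I_n}\lesssim r_n^2\Delta t_n^{-1}\|\dot{\boldsymbol{\xi}}\|_{0,I_n}$ --- still leaves you with $\|\dot{\boldsymbol{\xi}}\|_{0,I_n}$ on the right, which $|\boldsymbol{\xi}|_{\calA}$ does not dominate when $D=0$ (case a) of Assumption~\ref{ass:matrices}). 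In case b), $D$ positive definite, the volume term $\|D\dot{\boldsymbol{\xi}}\|_{0,I_n}^2$ in the seminorm rescues the argument; in case a) one must instead express $\|\dot{\boldsymbol{\xi}}\|_{0,I_n}$ through the endpoint traces and jumps, which is only possible for low polynomial degrees (for $r_n=1$, $\dot{\boldsymbol{\xi}}$ is constant on $I_n$ and $\|\dot{\boldsymbol{\xi}}\|_{0,I_n}^2=\Delta t_n\,|\dot{\boldsymbol{\xi}}(t_n^-)|^2$). This dichotomy is exactly the origin of the restriction $r_n=1,2$ for the undamped case stated in the Remark following the theorem, and your sketch, which treats both cases uniformly, silently assumes a control that is not available in general. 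Making the case distinction explicit (and choosing the projector so that the problematic volume terms vanish identically by the orthogonality condition \eqref{Eq:Projector}, rather than estimating them) is what closes the proof.
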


\begin{proof}
    For the proof see \cite{DalSanto2018}.
\end{proof}

\begin{remark}
 For the undamped case, i.e., $D=0$ in  Assumption~\ref{ass:matrices},  inequalities \eqref{eq::stability_dgI} and \eqref{eq::error_dgI} hold only for polynomial degree $r_n = 1,2$ for any $n=1,\ldots,N$. In particular, for linear polynomials, the proof follows easily the arguments in \cite{DalSanto2018,AnMaMi20} by observing  that \eqref{def::normA} reduces to 
\begin{align*}
    |\uu|_{\calA}^2 & =  \frac{1}{2}(M^{\frac{1}{2}}\dot{\uu}(0^+))^2 + \frac{1}{2}\sum_{n=1}^{N-1}(M^{\frac{1}{2}}(\dot{\uu}(t_{n-1}^+) - \dot{\uu}(t_{n}^+)))^2 + \frac{1}{2}(M^{\frac{1}{2}}\dot{\uu}(T^-))^2 \nonumber \\ & + \frac{1}{2}(A^{\frac{1}{2}}{\uu}(0^+))^2 + \frac{1}{2}\sum_{n=1}^{N-1}(A^{\frac{1}{2}}[{\uu}]_n)^2 + \frac{1}{2}(A^{\frac{1}{2}}{\uu}(T^-))^2,
\end{align*}
since $\dot{\uu}(t_n^+) = \dot{\uu}(t_{n+1}^-)$ for any $n=0,\ldots,N-1$.
\end{remark}
%%%%%%%%%%%
%%%%%%%%%%%%%
\subsection{Formulation \textit{dG1}}

In this section, we review the results presented in \cite{AntoniettiMiglioriniMazzieri2021} and extend them to cover case b) in  Assumption~\ref{ass:matrices}, which is relevant for the problems studied in \cite{ABM_Vietnam,AntoniettiMazzieriNatipoltri2021} and shown in Section~\ref{sec::numerical_results}. 
%%%%%%%
\begin{myprop}[\textit{Stability of dG1}] Let Assumption~\ref{ass:matrices} a) with $r_n=0,1$ for any $n=1,\ldots,N$, or Assumption~\ref{ass:matrices} b) with $r_n \geq 1$ for any $n=1,\ldots,N$ hold. 
	Let $\ff \in \LL^2(0,T]$, $\uu_0, \vvv_0 \in \mathbb{R}^{{N_h}}$, and let $(\uu_{dG},\vvv_{dG}) \in \mathcal{V}_{dG} \times \mathcal{V}_{dG}$ be the solution of \eqref{eq::wf_dgII}, then it holds
	\begin{equation}
		\label{eq::stability_dgII}
		|(\uu_{dG},\vvv_{dG})|_{\calB}^2 \lesssim \sum_{n=1}^N ||D^{-\frac{1}{2}}\ff||_{0,I_n}^2+(A^{\frac{1}{2}}\uu_0)^2+(M^{\frac{1}{2}}\vvv_0)^2. 
	\end{equation}
\end{myprop}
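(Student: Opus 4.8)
The plan is to use the defining identity $|(\uu_{dG},\vvv_{dG})|_{\calB}^2 = \calB((\uu_{dG},\vvv_{dG}),(A\uu_{dG},\vvv_{dG}))$ from \eqref{def::normB} together with the variational equation \eqref{eq::wf_dgII}. Since $A$ is a constant matrix, $A\uu_{dG}$ is again piecewise polynomial of degree $\le r_n$, so $(A\uu_{dG},\vvv_{dG})\in\mathcal{V}_{dG}\times\mathcal{V}_{dG}$ is an admissible test pair; inserting it into \eqref{eq::wf_dgII} and recalling \eqref{def::F_dgII} gives
\[
|(\uu_{dG},\vvv_{dG})|_{\calB}^2 \;=\; \mathcal{G}((A\uu_{dG},\vvv_{dG})) \;=\; \sum_{n=1}^N (\ff,\vvv_{dG})_{I_n} + M\vvv_0\cdot\vvv_{dG}(0^+) + \uu_0\cdot A\uu_{dG}(0^+).
\]
Hence the whole problem reduces to estimating these three terms by the right-hand side of \eqref{eq::stability_dgII}, while reabsorbing any genuine ``energy'' quantities back into $|(\uu_{dG},\vvv_{dG})|_{\calB}^2$ using the explicit expression \eqref{def::normB}.

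For the two initial-data terms I would symmetrize: $M\vvv_0\cdot\vvv_{dG}(0^+) = (M^{\frac12}\vvv_0)\cdot(M^{\frac12}\vvv_{dG}(0^+))$ and $\uu_0\cdot A\uu_{dG}(0^+) = (A^{\frac12}\uu_0)\cdot(A^{\frac12}\uu_{dG}(0^+))$ (both well defined since $M$ is symmetric positive definite and $A$ symmetric positive semidefinite), then apply Cauchy--Schwarz followed by Young's inequality $ab\le \frac{\delta}{2}a^2+\frac{1}{2\delta}b^2$. This produces the two boundary contributions $\frac{\delta}{2}(M^{\frac12}\vvv_{dG}(0^+))^2$ and $\frac{\delta}{2}(A^{\frac12}\uu_{dG}(0^+))^2$, which appear with a fixed positive coefficient in \eqref{def::normB}; choosing $\delta$ small enough absorbs them into the left-hand side and leaves only $(M^{\frac12}\vvv_0)^2$ and $(A^{\frac12}\uu_0)^2$ on the right.

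The forcing term is where the two cases of Assumption~\ref{ass:matrices} diverge, and it is the main obstacle. Under Assumption~\ref{ass:matrices} b) the matrix $D$ is symmetric positive definite, so one simply writes $(\ff,\vvv_{dG})_{I_n}=(D^{-\frac12}\ff,D^{\frac12}\vvv_{dG})_{I_n}\le \|D^{-\frac12}\ff\|_{0,I_n}\|D^{\frac12}\vvv_{dG}\|_{0,I_n}$ and applies Young's inequality; the resulting term $\frac{\delta}{2}\sum_n\|D^{\frac12}\vvv_{dG}\|_{0,I_n}^2=\frac{\delta}{2}\sum_n(D\vvv_{dG},\vvv_{dG})_{I_n}$ is exactly a multiple of the damping contribution $\sum_n\|D\vvv_{dG}\|_{0,I_n}^2$ in \eqref{def::normB}, so it is absorbed for $\delta$ small, and this works for every $r_n\ge1$. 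Under Assumption~\ref{ass:matrices} a) one has $D=0$, so this absorption is not available and the forcing contribution must be controlled by hand: one bounds $(\ff,\vvv_{dG})_{I_n}\le\|\ff\|_{0,I_n}\|\vvv_{dG}\|_{0,I_n}$ and must then estimate $\|\vvv_{dG}\|_{0,I_n}$ by the jump/trace quantities already present in $|(\uu_{dG},\vvv_{dG})|_{\calB}$. For $r_n\le1$ this is possible through an inverse-type inequality in time applied to the polynomial $\vvv_{dG}$ on $I_n$, combined with a discrete Grönwall argument over the time slabs (this is precisely where the degree restriction $r_n=0,1$ is used), following \cite{AntoniettiMiglioriniMazzieri2021}.

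Collecting the three estimates, and hiding in the $\lesssim$ constant the dependence on $T$ and on the norms of $M$, $A$, $D$ (and of $D^{-1}$ in case b), yields \eqref{eq::stability_dgII}; I expect the only nontrivial point to be the case $D=0$, where the lack of a damping term in the energy seminorm forces the Grönwall-type treatment of the source and motivates the polynomial-degree restriction.
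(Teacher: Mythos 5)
Your argument is correct and is essentially the same energy argument used in the references the paper cites for this result (the paper itself gives no proof): test with $(A\uu_{dG},\vvv_{dG})$, invoke the identity \eqref{def::normB}, and handle the three data terms by Cauchy--Schwarz and Young with absorption, the damping term $(D\vvv_{dG},\vvv_{dG})_{I_n}$ absorbing the source in case b) and an inverse-inequality/Gr\"onwall argument (the origin of the restriction $r_n=0,1$) replacing it in case a) where $D=0$ and the bound must be read with $\|\ff\|_{0,I_n}$ in place of $\|D^{-1/2}\ff\|_{0,I_n}$. You correctly identify the $D=0$ case as the only nontrivial step and defer it to the literature, exactly as the paper does.
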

\begin{proof}
For the proof we refer the reader to  \cite{rezaei2023discontinuous}, for Assumption~\ref{ass:matrices} a), and \cite{AntoniettiMiglioriniMazzieri2021} for Assumption~\ref{ass:matrices} b). 
\end{proof}

%\todo{questa distinzione va forse detta nel teorema}
%\begin{remark}
%    In the undamped case, i.e., $D=0$ for case a) \Paola{in  Assumption~\ref{ass:matrices}}, inequality \eqref{eq::stability_dgII} holds only for constant and linear polynomials, i.e., $r_n=0,1$ for any $n=1,\ldots,N$. For the proof see \cite{rezaei2023discontinuous}.
%\end{remark}

Before presenting the the convergence results of this section, namely Theorem \ref{thm::conv_dgII}, we introduce some preliminary results that are instrumental for its proof.  We refer the interested reader to \cite{AntoniettiMiglioriniMazzieri2021} for further details, see also \cite{ScSc2000}.

\begin{mylemma}
	\label{Le:Projector}
	Let $I=(-1,1)$ and $u\in L^2(I)$ continuous at $t=1$, the projector $\Pi^r u \in \mathcal{P}^r(I)$, $r\in\mathbb{N}_0$, defined by the $r+1$ conditions
	\begin{equation}
		\label{Eq:Projector}
		\Pi^r u (1) = u(1), \qquad (\Pi^r u - u,q)_{I} = 0 \quad\forall\, q\in\mathcal{P}^{r-1}(I),
	\end{equation} 
	is well-posed. Moreover, let $I=(a,b)$, $\Delta t = b-a$, $r\in\mathbb{N}_0$ and $u\in H^{s_0+1}(I)$ for some $s_0\in\mathbb{N}_0$. Then
	\begin{equation}
		\label{Eq:ProjectionError}
		||u-\Pi^r u||_{L^2(I)}^2 \le C\bigg(\frac{\Delta t}{2}\bigg)^{2(s+1)}\frac{1}{r^2}\frac{(r-s)!}{(r+s)!}||u^{(s+1)}||_{L^2(I)}^2,
	\end{equation}
 and
 	\begin{equation}\label{Eq:DerivativeProjectionError}
	||\dot{u}-\dot{\big(\Pi^r u\big)}||_{L^2(I)}^2 \lesssim \bigg(\frac{\Delta t}{2}\bigg)^{2(s+1)}(r+2)\frac{(r-s)!}{(r+s)!}||u^{(s+1)}||_{L^2(I)}^2,
	\end{equation}
	for any integer $0\le s \le \min(r,s_0)$. C depends on $s_0$ but is independent from $r$ and $\Delta t$. 
\end{mylemma}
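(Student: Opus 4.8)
The plan is to treat the two parts separately: first the well-posedness of $\Pi^r$, then the two approximation estimates. For well-posedness, I would argue that the $r+1$ linear conditions in \eqref{Eq:Projector} determine a unique element of the $(r+1)$-dimensional space $\mathcal{P}^r(I)$; since the number of conditions equals the dimension, it suffices to show that the only $p\in\mathcal{P}^r(I)$ satisfying $p(1)=0$ and $(p,q)_I=0$ for all $q\in\mathcal{P}^{r-1}(I)$ is $p\equiv 0$. Taking $q$ to be the polynomial of degree $r-1$ obtained by, e.g., writing $p(t)=(t-1)\tilde p(t)$ with $\tilde p\in\mathcal{P}^{r-1}$ and testing against $\tilde p$ — or more cleanly expanding $p$ in the Legendre basis and using $p(1)=0$ together with orthogonality to kill all but the top mode, then showing the top mode must also vanish — gives $p=0$. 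This is the routine part.

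For the two error bounds I would use the Legendre expansion of $u$ on $I=(-1,1)$ (the general interval $(a,b)$ follows by affine scaling, which accounts for the $(\Delta t/2)$ factors). Writing $u=\sum_{k\ge 0} \hat u_k L_k$, the key observation is that the operator $\Pi^r$ defined by \eqref{Eq:Projector} has an explicit action on this expansion: because $L_k(1)=1$ for all $k$ and the $L_k$ are $L^2$-orthogonal, $\Pi^r u$ keeps the modes $k\le r-1$ unchanged and replaces the tail $\sum_{k\ge r}\hat u_k L_k$ by the single polynomial $\big(\sum_{k\ge r}\hat u_k\big)L_r$ so as to match the value at $t=1$. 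Hence $u-\Pi^r u = \sum_{k\ge r}\hat u_k L_k - \big(\sum_{k\ge r}\hat u_k\big) L_r$, and its $L^2$ norm is controlled by $\sum_{k\ge r}|\hat u_k|^2 \|L_k\|^2$ up to a constant. One then bounds the high-order Legendre coefficients $\hat u_k$ in terms of $\|u^{(s+1)}\|_{L^2(I)}$ using the standard decay estimate for Legendre coefficients of $H^{s+1}$ functions — this is exactly the type of estimate in \cite{ScSc2000}, giving the factorial ratio $(r-s)!/(r+s)!$ and the appropriate power of $r$. Summation over $k\ge r$ and a scaling back to $(a,b)$ yields \eqref{Eq:ProjectionError}. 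For \eqref{Eq:DerivativeProjectionError}, I would differentiate the expansion of $u-\Pi^r u$, use the identities for $\dot L_k$ in terms of lower-degree Legendre polynomials (or the bound $\|\dot L_k\|_{L^2(I)}^2 \lesssim k^2 \|L_k\|_{L^2(I)}^2 / \ldots$, more precisely $\|\dot L_k\|^2 = k(k+1)$ with the normalization in use), and again sum the tail; the extra factor $(r+2)$ relative to $\eqref{Eq:ProjectionError}$ (as opposed to naively $r^2$ more) comes from the telescoping/cancellation structure of the Legendre derivative recurrence, so the growth is milder than one first expects.

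The main obstacle is the precise bookkeeping of the constants in the Legendre-coefficient decay estimate and in the derivative identities — in particular getting exactly the factorial ratio $(r-s)!/(r+s)!$ and the powers $r^{-2}$ resp. $(r+2)$ rather than slightly weaker $r$-dependence. This is where one must invoke the sharp results of \cite{ScSc2000} (or reprove them via repeated integration by parts against Legendre polynomials together with Rodrigues' formula), rather than crude Cauchy–Schwarz bounds. The affine scaling $I=(a,b)\to(-1,1)$ is straightforward but must be done carefully so that each differentiation produces one factor of $2/\Delta t$, matching the $(\Delta t/2)^{2(s+1)}$ in both displays; everything else is an exercise in orthogonal-polynomial estimates.
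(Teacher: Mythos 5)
The paper does not prove this lemma itself: it states it as a preliminary result and defers to \cite{AntoniettiMiglioriniMazzieri2021} and \cite{ScSc2000}, whose argument is precisely the one you outline (uniqueness from counting conditions plus $L_r(1)=1$; Legendre expansion with the projector acting modally as $\Pi^r u=\sum_{k\le r-1}\hat u_k L_k+\bigl(\sum_{k\ge r}\hat u_k\bigr)L_r$; coefficient decay via Rodrigues' formula and integration by parts giving the factorial ratio; affine scaling for the $(\Delta t/2)$ powers), so your plan is essentially the same approach and is sound. One caveat worth fixing before executing it: the intermediate claim that $\|u-\Pi^r u\|_{L^2}^2$ is controlled ``up to a constant'' by $\sum_{k\ge r}|\hat u_k|^2\|L_k\|^2$ is false for general $L^2$ tails, since the cross term $\bigl(\sum_{k\ge r}\hat u_k\bigr)^2\|L_r\|^2$ cannot be absorbed with an $r$- and $u$-independent constant; it must be estimated by a weighted Cauchy--Schwarz inequality combined with the $H^{s+1}$ decay of the $\hat u_k$, which is exactly the step you (correctly) defer to \cite{ScSc2000}.
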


In the following, we make use of the notation 
$$\ee^u = \uu - \uu_{dG} \quad {\rm and} \quad \ee^v = \vvv - \vvv_{dG},$$
and we split the errors $\ee^u$ and $\ee^v$ as
\begin{align*}
  \ee^u & = \ee_\pi^u + \ee_h^u = (\uu - \bm \Pi^r \uu) + (\bm \Pi^r \uu - \uu_{dG}), \\
  \ee^v & = \ee_\pi^v + \ee_h^v = (\vvv - \bm \Pi^r \vvv) + (\bm \Pi^r \vvv - \vvv_{dG}),
\end{align*} 
being $\bm \Pi^r\uu$ the trivial vectorial extension of $\Pi^ru$ in Lemma~\ref{Le:Projector}. 
Finally, we remark that \eqref{eq::wf_dgII} is a consistent formulation, i.e.,    
\begin{equation}\label{eq::consistency_B}
       \calB((\ee^u,\ee^v),(\ww,\zz)) = 0 \quad \forall \,(\ww,\zz) \in \mathcal{V}_{dG} \times \mathcal{V}_{dG}.
\end{equation}
%%%%\Alberto{ Ho fatto una prima passata ed i conti mi tornavano.}
%%%%%%
\begin{mylemma}
Let Assumption~\ref{ass:matrices} holds. Let $(\uu,\vvv)$ be the solution of \eqref{Eq:FirstOrderSystem1} and let $(\uu_{dG},\vvv_{dG}) \in \mathcal{V}_{dG}\times \mathcal{V}_{dG}$ be its dG finite element approximation based on employing formulation \eqref{eq::wf_dgII}. Then, it holds
\begin{equation}\label{eq::estimateAu}
 \sum_{n=1}^{N} \| A\ee_h^u \|^2_{0,I_n}
 \lesssim  | (\ee_h^u, \ee_h^v) |^2_{\calB} + 
 \sum_{n=1}^{N} \|\ee_\pi^v \|_{0,I_n}^2 +  \| \ee_\pi^u\|_{0,I_n}^2,
\end{equation}
   where the hidden constant depends on the norm of
the matrices $M,D$ and $A$. 
\end{mylemma}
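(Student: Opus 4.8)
The plan is to extract control of $\sum_n \|A\ee_h^u\|_{0,I_n}^2$ by making a clever choice of test functions in the consistency identity \eqref{eq::consistency_B}, much in the spirit of how the seminorm $|\cdot|_{\calB}$ itself is generated by testing $\calB((\uu,\vvv),(A\uu,\vvv))$. First I would write the Galerkin orthogonality as $\calB((\ee_h^u,\ee_h^v),(\ww,\zz)) = -\calB((\ee_\pi^u,\ee_\pi^v),(\ww,\zz))$ for all $(\ww,\zz)\in\calV_{dG}\times\calV_{dG}$, so that the discrete error pair satisfies an equation whose right-hand side involves only the (controllable) projection errors $\ee_\pi^u,\ee_\pi^v$. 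The aim is to pick $(\ww,\zz)$ so that the left-hand side reproduces $\sum_n\|A\ee_h^u\|_{0,I_n}^2$ up to terms already bounded by $|(\ee_h^u,\ee_h^v)|_{\calB}^2$.

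The natural candidate is to take $\zz = A\ee_h^u$ (which lies in $\calV_{dG}$ since $A$ is a constant matrix and $\ee_h^u\in\calV_{dG}$) and $\ww = \bm 0$, or more precisely a combination that isolates the desired term. With $\ww=\bm 0$, $\zz = A\ee_h^u$, the second block of $\calB$ gives $\sum_n (M\dot{\ee}_h^v, A\ee_h^u)_{I_n} + (D\ee_h^v, A\ee_h^u)_{I_n} + (A\ee_h^u, A\ee_h^u)_{I_n} + \sum_n M[\ee_h^v]_n\cdot(A\ee_h^u)(t_n^+) + M\ee_h^v(0^+)\cdot(A\ee_h^u)(0^+)$, which contains exactly $\sum_n\|A\ee_h^u\|_{0,I_n}^2$ as the one "good" term. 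I would then move every other term to the right-hand side and bound it: the terms involving $\ee_h^v$ and its jumps/traces are estimated using Cauchy–Schwarz together with the fact that $\|M^{1/2}\ee_h^v(0^+)\|$, $\|M^{1/2}[\ee_h^v]_n\|$ and $\|D\ee_h^v\|_{0,I_n}$ are all controlled by $|(\ee_h^u,\ee_h^v)|_{\calB}$ (since $M,D$ are SPD under either case of Assumption~\ref{ass:matrices}, or by the appropriate interpretation in case a); the $\|A\ee_h^u\|$ factors that appear alongside them are absorbed into the left-hand side via Young's inequality. The projection-error contributions $-\calB((\ee_\pi^u,\ee_\pi^v),(\bm 0, A\ee_h^u))$ are bounded by $\sum_n(\|\ee_\pi^v\|_{0,I_n} + \|\ee_\pi^u\|_{0,I_n})\|A\ee_h^u\|_{0,I_n}$ plus jump/trace terms in $\ee_\pi$, again absorbed by Young; here one may need an inverse-type trace inequality on $I_n$ relating $\ee_\pi^u(t_n^\pm)$ to $\|\ee_\pi^u\|_{0,I_n}/\Delta t_n^{1/2}$, but since these ultimately feed into the projection estimates of Lemma~\ref{Le:Projector} the statement absorbs them into the hidden constant.

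The one subtlety is the term $\sum_n(M\dot{\ee}_h^v, A\ee_h^u)_{I_n}$: $\dot{\ee}_h^v$ is one polynomial degree lower but is not directly controlled by $|(\ee_h^u,\ee_h^v)|_{\calB}$. To handle it I would integrate by parts in time on each $I_n$, $(M\dot{\ee}_h^v,A\ee_h^u)_{I_n} = [M\ee_h^v\cdot A\ee_h^u]_{t_{n-1}^+}^{t_n^-} - (M\ee_h^v, A\dot{\ee}_h^u)_{I_n}$, and then use the first block of the weak form — that is, also activate the first equation of $\calB$ with a suitable test function $\ww$ built from $A\dot{\ee}_h^u$ or $\ee_h^v$ — to trade $\dot{\ee}_h^u$ against $\ee_h^v$ (recall the continuous relation $\dot{\uu}=\vvv$, whose discrete残差 is exactly the first block). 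This is the step I expect to be the main obstacle: orchestrating the two blocks of $\calB$ simultaneously so that the time-derivative terms cancel and only seminorm-controlled quantities plus projection errors remain. Once that bookkeeping closes, collecting terms and applying Young's inequality with a small parameter to absorb all $\|A\ee_h^u\|_{0,I_n}$ factors on the right into the left yields \eqref{eq::estimateAu}, with the hidden constant depending on $\|M\|$, $\|D\|$, $\|A\|$ as claimed.
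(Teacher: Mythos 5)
Your overall strategy (test the Galerkin orthogonality \eqref{eq::consistency_B} with functions built from $A\ee_h^u$ so that $\sum_n\|A\ee_h^u\|_{0,I_n}^2$ appears as the coercive term) is the right one, and it is the paper's strategy too. But the proposal has a genuine gap at exactly the point you flag as ``the main obstacle'': you never specify the test pair that makes the bookkeeping close, and your fallback choice $(\ww,\zz)=(\bm 0,A\ee_h^u)$ does not work. With that choice you are left not only with $\sum_n(M\dot{\ee}_h^v,A\ee_h^u)_{I_n}$ but also with the interface terms $M[\ee_h^v]_n\cdot A\ee_h^u(t_n^+)$; the seminorm \eqref{def::normB} controls jumps $[\ee_h^u]_n$ and the two endpoint traces, but \emph{not} the one-sided interior traces $A\ee_h^u(t_n^+)$, so Cauchy--Schwarz against $|(\ee_h^u,\ee_h^v)|_{\calB}$ is not available and an inverse trace inequality would introduce $\Delta t_n^{-1/2}$ factors absent from \eqref{eq::estimateAu}. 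The paper's resolution is the specific symmetrized pair $\ww=A\ee_h^v$, $\zz=M^{-1}A\ee_h^u$: then the good term becomes $(M^{-1}A\ee_h^u,A\ee_h^u)_{I_n}\gtrsim\|A\ee_h^u\|_{0,I_n}^2$, and after one integration by parts of $(\dot{\ee}_h^u,A\ee_h^v)_{I_n}$ the two derivative cross-terms $-\sum_n(\ee_h^u,A\dot{\ee}_h^v)_{I_n}+\sum_n(\dot{\ee}_h^v,A\ee_h^u)_{I_n}$ cancel exactly by symmetry of $A$, while the interior traces recombine into $A\ee_h^u(0^+)\cdot\ee_h^v(0^+)+\sum_nA[\ee_h^v]_n\cdot[\ee_h^u]_n+A\ee_h^u(T^-)\cdot\ee_h^v(T^-)$, i.e.\ jumps and endpoint traces that \emph{are} controlled by $|(\ee_h^u,\ee_h^v)|_{\calB}^2$ (up to constants depending on $\|A\|$, $\|M^{-1}\|$). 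This is the orchestration you anticipated but did not carry out.

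A second, smaller gap concerns the projection-error terms. You propose to estimate $(\dot{\ee}_\pi^u,A\ee_h^v)_{I_n}$ and the jumps of $\ee_\pi$ by inverse/trace inequalities; that would leave derivative and trace norms of $\ee_\pi$ on the right-hand side, which do not appear in \eqref{eq::estimateAu}. The paper instead uses the defining properties \eqref{Eq:Projector} of the projector (exactness at the right endpoint of each $I_n$ together with orthogonality to $\mathcal{P}^{r_n-1}$): after integration by parts, $\sum_n-(\dot{\ee}_\pi^u,A\ee_h^v)_{I_n}-\sum_n[\ee_\pi^u]_n\cdot A\ee_h^v(t_n^+)$ vanishes identically (and likewise with $u$ and $v$ exchanged), so only the zeroth-order terms $(\ee_\pi^v,A\ee_h^v)_{I_n}$, $(M^{-1}D\ee_\pi^v,A\ee_h^u)_{I_n}$, $(M^{-1}A\ee_\pi^u,A\ee_h^u)_{I_n}$ survive, which Young's inequality turns into exactly the right-hand side of \eqref{eq::estimateAu}. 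Without invoking these two structural cancellations, the argument does not close in the stated form.
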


\begin{proof}
We consider \eqref{eq::consistency_B} and choose $\ww = A \ee_h^v$ and $\zz = M^{-1}A\ee_h^u$  to obtain 
   \begin{equation*}
       \calB_a((\ee_h^u,\ee_h^v),(A \ee_h^v,M^{-1}A \ee_h^u)) = - \calB_a((\ee_\pi^u,\ee_\pi^v),(A \ee_h^v,M^{-1} A \ee_h^u)), 
   \end{equation*}
or, equivalently, %\Alberto{Qui non mi tornava un segno}
\begin{multline}\label{eq::starting}
\sum_{n=1}^{N}(\dot{\ee_h^u},A \ee_h^v)_{I_n} - \sum_{n=1}^{N}(\ee_h^v,A \ee_h^v)_{I_n} + \sum_{n=0}^{N-1} [\ee_h^u]_{n}\cdot A \ee_h^v(t_{n}^+)  + \sum_{n=1}^{N}(\dot{\ee_h^v},A \ee_h^u)_{I_n} \\ + \sum_{n=1}^{N}(M^{-1}D\ee_h^v,A \ee_h^u)_{I_n}  + \sum_{n=1}^{N}(M^{-1}A\ee_h^u,A \ee_h^u)_{I_n}   + \sum_{n=0}^{N-1}   [\ee_h^v]_{n}\cdot A \ee_h^u(t_{n}^+) \\ = \sum_{n=1}^{N}- (\dot{\ee_\pi^u},A \ee_h^v)_{I_n} + \sum_{n=1}^{N}(\ee_\pi^v,A \ee_h^v)_{I_n} - \sum_{n=0}^{N-1} [\ee_\pi^u]_{n}\cdot A \ee_h^v(t_{n}^+)  - \sum_{n=1}^{N}(\dot{\ee_\pi^v},A \ee_h^u)_{I_n} \\ - \sum_{n=1}^{N}(M^{-1}D\ee_\pi^v,A \ee_h^u)_{I_n}  - \sum_{n=1}^{N}(M^{-1}A\ee_\pi^u,A \ee_h^u)_{I_n}   - \sum_{n=0}^{N-1}   [\ee_\pi^v]_{n}\cdot A \ee_h^u(t_{n}^+),
\end{multline}
where we have implicitly used that $\ee_h^u(0^-) = \ee_h^v(0^-) = \ee_\pi^u(0^-) = \ee_\pi^v(0^-) =   \bm 0$.
Next, we integrateby parts the first term on the left-hand side of \eqref{eq::starting} to get
\begin{equation}\label{eq::simplify_lhs}
    \sum_{n=1}^N (\dot{\ee}_h^u,A \ee_h^v)_{I_n} = -\sum_{n=1}^N (\ee_h^u, A \dot{\ee}_h^v)_{I_n} + \sum_{n=0}^{N-1} \left( \ee_h^u(t_{n+1}^-) \cdot A\ee_h^v(t_{n+1}^-) - \ee_h^u(t_{n}^+) \cdot A\ee_h^v(t_{n}^+) \right).
\end{equation}
Moreover, we note that
\begin{equation}\label{eq::simplify_u}
    \sum_{n=1}^N -(\dot{\ee}_\pi^u,A \ee_h^v)_{I_n}  - \sum_{n=0}^{N-1} [\ee_\pi^u]_{n}\cdot A \ee_h^v(t_{n}^+) = 0,
\end{equation}
thanks to \eqref{Eq:Projector}. 
%\Alberto{Questo passaggio non mi è evidente. Cioé la definizione di proiettore mi uccide questi termini? Questo perché è in $A \dot{\ee}_h^v \in \mathcal{P}^{r-1}$? Secondo me potremmo dettagliare $:)$
%\begin{equation}
%    \sum_{n=1}^N (\ee_\pi^u, A \dot{\ee}_h^v)_{I_n} - \ee_h^v(t_{N}^-) \cdot A \ee_h^v(t_{N}^-) + \ee_\pi^u(t_{0}^-) \cdot A \ee_h^v(t_{0}^+)
%\end{equation}
%}
%
The same holds for 
\begin{equation}\label{eq::simplyfy_v}
    \sum_{n=1}^N -(\dot{\ee}_\pi^v,A \ee_h^u)_{I_n}  - \sum_{n=0}^{N-1} [\ee_\pi^v]_{n}\cdot A \ee_h^u(t_{n}^+) = 0.
\end{equation}
Using \eqref{eq::simplify_lhs}, \eqref{eq::simplify_u}, and \eqref{eq::simplyfy_v} into \eqref{eq::starting} and rearranging the terms we get 
\begin{multline}\label{eq::startig_1}
\sum_{n=1}^{N} (M^{-1}A\ee_h^u,A \ee_h^u)_{I_n}
+ \sum_{n=1}^{N}(M^{-1}D\ee_h^v,A \ee_h^u)_{I_n} 
- \sum_{n=1}^{N} (\ee_h^v,A \ee_h^v)_{I_n} + \sum_{n=0}^{N-1} A[\ee_h^u]_{n}\cdot \ee_h^v(t_{n}^+)  \\  + \sum_{n=0}^{N-1} \left(A \ee_h^u(t_{n+1}^-) \cdot \ee_h^v(t_{n+1}^-) - A\ee_h^u(t_{n}^+) \cdot \ee_h^v(t_{n}^+) \right) + \sum_{n=0}^{N-1} A  [\ee_h^v]_{n}\cdot \ee_h^u(t_{n}^+) \\ = \sum_{n=1}^{N} (\ee_\pi^v,A \ee_h^v)_{I_n}  - (M^{-1}D\ee_\pi^v,A \ee_h^u)_{I_n}  + (M^{-1}A\ee_\pi^u,A \ee_h^u)_{I_n},
\end{multline}
where we also used that $A$ is symmetric  to cancel out the terms $-\sum_{n=1}^N (\ee_h^u, A \dot{\ee}_h^v)_{I_n} + \sum_{n=1}^N (\dot{\ee}_h^v , A \ee_h^u )_{I_n}$. 
Next, we consider the last three term on the left-hand side and manipulate them to obtain 
%\Alberto{qui ho aggiunto un pedice al salto}
\begin{multline*}
\sum_{n=0}^{N-1} A[\ee_h^u]_{n}\cdot \ee_h^v(t_{n}^+)  + \sum_{n=0}^{N-1} \left(A \ee_h^u(t_{n+1}^-) \cdot \ee_h^v(t_{n+1}^-) - A\ee_h^u(t_{n}^+) \cdot \ee_h^v(t_{n}^+) \right) + \sum_{n=0}^{N-1} A  [\ee_h^v]_{n}\cdot \ee_h^u(t_{n}^+) \\% = 
    %\sum_{n=0}^{N-1} A\ee_h^u(t_n^+)\cdot \ee_h^v(t_{n}^+) - A\ee_h^u(t_n^-)\cdot \ee_h^v(t_{n}^+)  + A \ee_h^u(t_{n+1}^-) \cdot \ee_h^v(t_{n+1}^-) - A\ee_h^u(t_{n}^+) \cdot \ee_h^v(t_{n}^+) \\ +  A  \ee_h^v(t_n^+)\cdot \ee_h^u(t_{n}^+) - A  \ee_h^v(t_n^-)\cdot \ee_h^u(t_{n}^+) \\
    %=    \sum_{n=0}^{N-1} \Big( A  \ee_h^v(t_n^+)\cdot \ee_h^u(t_{n}^+) - A  \ee_h^v(t_n^-)\cdot \ee_h^u(t_{n}^+)  + A \ee_h^u(t_{n}^-) \cdot \ee_h^v(t_{n}^-)    - A\ee_h^u(t_n^-)\cdot \ee_h^v(t_{n}^+)\Big) \\ + A \ee_h^u(T^-) \cdot \ee_h^v(T^-)  \\
 %=    \sum_{n=0}^{N-1}  A  [\ee_h^v]_n\cdot \ee_h^u(t_{n}^+) - A  [\ee_h^v]_n\cdot \ee_h^u(t_{n}^-) + A \ee_h^u(T^-) \cdot \ee_h^v(T^-) \\
 = A \ee_h^u(0^+) \cdot \ee_h^v(0^+) + \sum_{n=0}^{N-1}  A  [\ee_h^v]_n\cdot [\ee_h^u]_n + A \ee_h^u(T^-) \cdot \ee_h^v(T^-).
\end{multline*}
Thus, equation \eqref{eq::startig_1} becomes %\Alberto{anche qui c'è un segno che non mi torna}
\begin{multline*}
 \sum_{n=1}^{N} (M^{-1}A\ee_h^u,A \ee_h^u)_{I_n}
 =  - \sum_{n=1}^{N}(M^{-1}D\ee_h^v,A \ee_h^u)_{I_n} 
+ \sum_{n=1}^{N} (\ee_h^v,A \ee_h^v)_{I_n} 
\\ - A \ee_h^u(0^+) \cdot \ee_h^v(0^+) - \sum_{n=0}^{N-1}  A  [\ee_h^v]_n\cdot [\ee_h^u]_n - A \ee_h^u(T^-) \cdot \ee_h^v(T^-) \\ + \sum_{n=1}^{N} (\ee_\pi^v,A \ee_h^v)_{I_n}  - (M^{-1}D\ee_\pi^v,A \ee_h^u)_{I_n}  + (M^{-1}A\ee_\pi^u,A \ee_h^u)_{I_n}.
\end{multline*}
Finally, we use Cauchy-Schwarz and Young inequalities to have
\begin{multline*}
 \sum_{n=1}^{N} \| A\ee_h^u \|^2_{0,I_n}
 \lesssim  \sum_{n=1}^{N} \frac{1}{2\epsilon_1} \| \ee_h^v \|^2_{0,I_n} + \frac{\epsilon_1}{2} \| A \ee_h^u\|^2_{0,I_n} 
+ \sum_{n=1}^{N} \| \ee_h^v \|^2_{0,I_n} 
\\ +\frac12 (A^{\frac12}\ee_h^u(0^+))^2 + \frac12 \sum_{n=0}^{N-1}  (A^\frac12 [\ee_h^v]_n)^{\frac12}+\frac12 (A^{\frac12} \ee_h^u(T^-))^{\frac12} 
\\
+\frac12 (A^{\frac12}\ee_h^u(0^+))^2 + \frac12 \sum_{n=0}^{N-1}  (A^\frac12 [\ee_h^v]_n)^{\frac12}+\frac12 (A^{\frac12} \ee_h^u(T^-))^{\frac12} 
\\ + \sum_{n=1}^{N} \frac12 \| \ee_\pi^v \|^2_{0,I_n} + \frac12 \| \ee_h^v \|^2_{0,I_n}  + \sum_{n=1}^{N}  \frac{1}{2\epsilon_2}\|\ee_\pi^v \|_{0,I_n}^2 + \frac{\epsilon_2}{2} \| A \ee_h^u \|^2_{0,I_n} \\
+ \sum_{n=1}^{N} \frac{1}{2\epsilon_3} \| \ee_\pi^u\|_{0,I_n}^2 + \frac{\epsilon_3}{2} \| A \ee_h^u\|^2_{0,I_n},
\end{multline*}
for any positive $\epsilon_1,\epsilon_2$, and  $\epsilon_3$.
Now, choosing $\epsilon_1,\epsilon_2$, and  $\epsilon_3$ small enough we obtain 
\begin{equation*}
 \sum_{n=1}^{N} \| A\ee_h^u \|^2_{0,I_n}
 \lesssim  | (\ee_h^u, \ee_h^v) |^2_{\calB} + 
 \sum_{n=1}^{N} \|\ee_\pi^v \|_{0,I_n}^2 +  \| \ee_\pi^u\|_{0,I_n}^2,
\end{equation*}
which concludes the proof.
\end{proof}

\begin{myth}[\textit{Convergence of dG1}]\label{thm::conv_dgII}
Let Assumption~\ref{ass:matrices} holds.  Let $(\uu,\vvv)$ be the solution of \eqref{Eq:FirstOrderSystem1} and let $(\uu_{dG},\vvv_{dG}) \in \mathcal{V}_{dG}\times \mathcal{V}_{dG}$ be its dG finite element approximation based on employing formulation \eqref{eq::wf_dgII}. If $(\uu, \vvv)_{|_{I_n}} \in \bm H^{s_n} (I_n) \times \bm H^{s_n} (I_n)$, for any $n = 1, \ldots, N$, with $s_n \geq 2$,
then
\begin{equation}\label{eq::error_dgII}
    | (\uu,\vvv) - (\uu_{dG},\vvv_{dG}) |_{\calB} \lesssim \sum_{n=1}^N {\Delta t}_n^{\mu_n+\frac12} \left( (r_n+2) \frac{(r_n-\mu_n)!}{(r_n+\mu_n)!} \right)^{\frac12} \| (\uu,\vvv) \|_{\bm H^{\mu_n+1}(I_n)\times\bm H^{\mu_n+1}(I_n)},   
\end{equation}
where $s_n = min(\mu_n,r_n)$ for any $n = 1, \ldots, N$, and where the hidden constant depends on the norm of
the matrices $M,D$ and $A$.
\end{myth}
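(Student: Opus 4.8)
The plan is to follow the classical split-error strategy for dG time-stepping, exactly mirroring the structure used for \textit{dG2} in \cite{DalSanto2018} and for \textit{dG1} in \cite{AntoniettiMiglioriniMazzieri2021}. Since $|\cdot|_{\calB}$ is only a seminorm, the energy estimate alone does not control $\ee_h^u$ in the $L^2$-in-time sense, which is precisely why the preceding Lemma (estimate \eqref{eq::estimateAu}) is needed; that auxiliary control on $\sum_n \|A\ee_h^u\|_{0,I_n}^2$ will be the workhorse that compensates for the degeneracy of the seminorm. First I would write $\ee^u = \ee_\pi^u + \ee_h^u$ and $\ee^v = \ee_\pi^v + \ee_h^v$ as already introduced, and reduce the claim to bounding the discrete components $|(\ee_h^u,\ee_h^v)|_{\calB}$, since the projection components $\ee_\pi^u,\ee_\pi^v$ are estimated directly by the approximation results \eqref{Eq:ProjectionError}--\eqref{Eq:DerivativeProjectionError} of Lemma~\ref{Le:Projector} together with a triangle inequality in $|\cdot|_{\calB}$ (here one must also bound the trace and jump contributions of $\ee_\pi$, using standard polynomial trace/inverse inequalities on each $I_n$).

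Second, I would exploit Galerkin orthogonality \eqref{eq::consistency_B}: choosing the test pair $(\ww,\zz) = (A\ee_h^u, \ee_h^v)$ — the same pair that defines $|(\ee_h^u,\ee_h^v)|_{\calB}^2$ via \eqref{def::normB} — gives
\begin{equation*}
|(\ee_h^u,\ee_h^v)|_{\calB}^2 = \calB((\ee_h^u,\ee_h^v),(A\ee_h^u,\ee_h^v)) = -\,\calB((\ee_\pi^u,\ee_\pi^v),(A\ee_h^u,\ee_h^v)).
\end{equation*}
Then I would expand the right-hand side term by term according to \eqref{def::A_dgII}, integrate by parts in time where needed to move derivatives off $\ee_h$, and use the defining property \eqref{Eq:Projector} of $\Pi^r$ to kill the terms pairing $\dot{\ee}_\pi$ (or equivalently the interior $L^2$ pairings against polynomials) and the jump terms $[\ee_\pi]_n$ against polynomial traces. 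What survives is a finite collection of terms, each of the form $(\text{matrix}) \times \ee_\pi \times \ee_h$ integrated over $I_n$, or trace products at $t_n^\pm$, together with a $D$-weighted interior term $(D\ee_\pi^v,\ee_h^v)_{I_n}$.

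Third, I would bound each surviving term by Cauchy–Schwarz and Young's inequality, sending the $\ee_h$-factors either into $|(\ee_h^u,\ee_h^v)|_{\calB}^2$ directly (the $D^{1/2}$-weighted interior term, the $M^{1/2}$- and $A^{1/2}$-weighted trace/jump terms) or into $\sum_n \|A\ee_h^u\|_{0,I_n}^2$ (the interior terms involving $A\ee_h^u$ with no weight available from the seminorm), absorbing the latter via the Lemma estimate \eqref{eq::estimateAu}, and absorbing the small-$\epsilon$ multiples of $|(\ee_h^u,\ee_h^v)|_{\calB}^2$ and $\sum_n\|A\ee_h^u\|_{0,I_n}^2$ back to the left-hand side. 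This yields $|(\ee_h^u,\ee_h^v)|_{\calB}^2 \lesssim \sum_n \|\ee_\pi^u\|_{0,I_n}^2 + \|\ee_\pi^v\|_{0,I_n}^2 + \|D^{1/2}\text{-part of }\ee_\pi^v\|^2 + (\text{trace terms of }\ee_\pi)$. Finally I would insert the projection-error bounds \eqref{Eq:ProjectionError} and \eqref{Eq:DerivativeProjectionError} from Lemma~\ref{Le:Projector}, noting $\mu_n = \min(r_n,s_n)$, summing over $n$, and collecting the factor $\Delta t_n^{\mu_n+1/2}\big((r_n+2)(r_n-\mu_n)!/(r_n+\mu_n)!\big)^{1/2}$ (the extra half power of $\Delta t_n$ relative to \eqref{Eq:ProjectionError} comes from the trace terms, where a scaled trace inequality $\|w\|_{L^\infty(I_n)}^2 \lesssim \Delta t_n^{-1}\|w\|_{0,I_n}^2 + \ldots$ trades one power of $\Delta t_n$).

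\textbf{Main obstacle.} The delicate point is the bookkeeping of the interior terms carrying $A\ee_h^u$ with no naturally available weight: the seminorm $|\cdot|_{\calB}$ gives no $L^2$-in-time control of $\ee_h^u$ itself (only of its traces/jumps through $A^{1/2}$, and of $\ee_h^v$ through $D^{1/2}$), so one genuinely needs the auxiliary Lemma \eqref{eq::estimateAu} and a careful choice of the Young parameters so that the absorbed multiples of both $|(\ee_h^u,\ee_h^v)|_{\calB}^2$ \emph{and} $\sum_n\|A\ee_h^u\|_{0,I_n}^2$ stay strictly below $1$ simultaneously. In the undamped sub-case $D=0$ of Assumption~\ref{ass:matrices}a) this coupling is what forces the polynomial-degree restriction; under Assumption~\ref{ass:matrices}b), $D$ positive definite supplies the missing $\ee_h^v$ control and the argument closes for all $r_n\ge1$. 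The remaining steps — integration by parts, invoking \eqref{Eq:Projector}, and the Lemma~\ref{Le:Projector} estimates — are routine and follow \cite{AntoniettiMiglioriniMazzieri2021} essentially verbatim.
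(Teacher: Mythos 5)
Your proposal is correct and follows essentially the same route as the paper: the same splitting $\ee=\ee_\pi+\ee_h$, the same Galerkin-orthogonality step with the test pair $(A\ee_h^u,\ee_h^v)$, elimination of the $\dot{\ee}_\pi$ and jump terms via the defining conditions \eqref{Eq:Projector}, absorption of the surviving $A\ee_h^u$ interior terms through the auxiliary estimate \eqref{eq::estimateAu}, and the final insertion of the projection bounds of Lemma~\ref{Le:Projector}. Your identification of the main obstacle (the lack of $L^2$-in-time control of $\ee_h^u$ from the seminorm, and hence the indispensability of \eqref{eq::estimateAu}) and of the origin of the half-power loss in $\Delta t_n$ from the trace terms matches the paper's argument.
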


\begin{proof}
The proof closely follows the arguments in \cite{AntoniettiMiglioriniMazzieri2021} and we only report here the sketch for completeness. 
We start observing that $|(\ee^u,\ee^v)|_{\calB} \le |(\ee_\pi^u,\ee_\pi^v)|_{\calB}  + |(\ee_h^u,\ee_h^v|_{\calB}$. 
%\Alberto{Non capisco: non è sufficiente 19? Tra l'altro, chi è $u'$ nella 19?}
Employing the properties of the projector \eqref{Eq:Projector} and estimates \eqref{Eq:ProjectionError} and \eqref{Eq:DerivativeProjectionError}, we can bound $|(\ee_\pi^u,\ee_\pi^v)|_{\calB}$ as
	\begin{align*}
|(\ee_\pi^u,\ee_\pi^v)|_{\calB}^2 & = \calB((\ee_\pi^u,\ee_\pi^v),(A\ee_\pi^u,\ee_\pi^v)) \\ & \lesssim \sum_{n=1}^N \bigg(\frac{\Delta t_n}{2}\bigg)^{2\mu_n+1} (r_n+2) \frac{(r_n-\mu_n)!}{(r_n+\mu_n)!}\| (\uu,\vvv) \|^2_{\bm H^{s_n}(I_n)\times\bm H^{s_n}(I_n)},
	\end{align*}
	where $\mu_n = \min(r_n,s_n)$, for any $n=1,\dots,N$.
	For the term $|(\ee_h^u,\ee_h^v)|_{\calB}$ we use  \eqref{eq::consistency_B} and integrate by parts to get
	\begin{equation*}
	\begin{split}
		|(\ee_h^u,\ee_h^v)|^2_{\calB} &= \calB((\ee_h^u,\ee_h^v),(A\ee_h^u,\ee_h^v))) = -\calB((\ee_\pi^u,\ee_\pi^v),(A\ee_h^u,\ee_h^v)) \\ 
		& = -\sum_{n=1}^N(\ee_\pi^v,A\ee_h^u)_{I_n}  -\sum_{n=1}^N(A\ee_\pi^v,\ee_h^v)_{I_n} +\sum_{n=1}^N(D\ee_\pi^v,\ee_h^v)_{I_n}.
	\end{split}
	\end{equation*}
	Thus, we employ the Cauchy-Schwarz and inequalities, together with \eqref{eq::estimateAu} to obtain for any $\epsilon>0$
	\begin{equation*}
		|(\ee_h^u,\ee_h^v)|^2_{\calB}  \lesssim  \frac{1}{2\epsilon}\sum_{n=1}^N \Big(||\ee_\pi^u||_{L^2(I_n)}^2 + ||\ee_\pi^v||_{L^2(I_n)}^2\Big) + \frac{\epsilon}{2} |(\ee_h^u,\ee_h^v)|^2_{\calB},
 	\end{equation*}
from which the thesis follows.
\end{proof}
\begin{remark}
   For the undamped case, i.e., $D=0$ for case a) in Assumption~\ref{ass:matrices}, the proof 
   holds only for constant and linear polynomials $r_n=0,1$ for $n=1,\ldots,N$. See \cite{rezaei2023discontinuous}.
\end{remark}

\section{Algebraic formulation}\label{sec::AlgebraicFormulation}
In this section, we briefly review the algebraic formulation stemming from discretization  
 \textit{dG2} in \eqref{eq::wf_dgI}  or \textit{dG1} in \eqref{eq::wf_dgII} for the single time interval $I_n$. 
 In practice, we compute the numerical solution on the time interval at a time slab, assuming the initial conditions from the previous time interval.
In the following,  we introduce a basis $\{\psi^{\ell}(t)\}_{{\ell}=1,\dots,r_n+1}$ for the polynomial space $\mathcal{P}^{r_n}(I_n)$ and define a vectorial basis $\{ \boldsymbol{\Psi}_i^{\ell}(t) \}_{i=1,\dots,{N_h}}^{{\ell}=1,\dots,r_n+1}$ of $V_n^{r_n}$.
Then, we set $D_n={N_h}(r_n+1)$ and write the trial functions $\uu_{dG}, \vvv_{dG} \in V_n^{r_n}$ as
\begin{equation*}
	\uu_{dG}(t) = \sum_{j=1}^{{N_h}} \sum_{m=1}^{r_n+1} \alpha_{j}^m \boldsymbol{\Psi}_j^m(t), \quad 	\vvv_{dG}(t) = \sum_{j=1}^{{N_h}} \sum_{m=1}^{r_n+1} \beta_{j}^m \boldsymbol{\Psi}_j^m(t),
\end{equation*}
where $\alpha_{j}^m,\beta_{j}^m\in\mathbb{R}$ for $j=1,\dots,{N_h}$, $m=1,\dots,r_n+1$.

\subsection{Formulation \textit{dG2}}
We consider Problem~\eqref{eq::wf_dgI} on $I_n$ to  get: 
 find $\uu_{dG}\in V^{r_n}_n$ such that
\begin{multline}
	\label{eq::weak_reformulated_I}
	(M\ddot{\uu}_{dG},\dot{\ww})_{I_n} + (D\dot{\uu}_{dG},\dot{\ww})_{I_n} + (A\uu_{dG},\dot{\ww})_{I_n}  \\ +  M\dot{\uu}_{dG}(t_{n-1}^+)\cdot\dot{\ww}(t_{n-1}^+) + A\uu_{dG}(t_{n-1}^+)\cdot\ww(t_{n-1}^+) \\ =  (\FF,\dot{\ww})_{I_n} + M\dot{\uu}_{dG}(t_{n-1}^-)\cdot\dot{\ww}(t_{n-1}^+) + A\uu_{dG}(t_{n-1}^-)\cdot\ww(t_{n-1}^+),
\end{multline}
 for any $\ww \in V^{r_n}_n$.
Writing \eqref{eq::weak_reformulated_I}  for any test function $\boldsymbol{\Psi}_i^{\ell}(t)$, $i=1,\dots,d$, $\ell=1\,\dots,r_n+1$ we obtain the linear system of the form
\begin{equation}
	\label{Eq:LinearSystem_I}
	M_n\UU_n = \bm F_n,
\end{equation}
where $\UU_n,\bm F_n \in \mathbb{R}^{D_n}$ are the vectors of the expansion coefficient corresponding to the numerical solution and the right-hand side on the interval $I_n$ in the chosen basis, respectively.
We next investigate the structure of the matrix $M_n$ by deﬁning the following local matrices for $\ell,m = 1, . . . , r_n + 1$, namely
\begin{equation*}
    (N_1)_{\ell m} = (\ddot{\psi}^m,\dot{\psi}^\ell)_{I_n}, \quad  (N_2)_{\ell m} = (\dot{\psi}^m,\dot{\psi}^\ell)_{I_n}, \quad (N_3)_{\ell m} = ({\psi}^m,\dot{\psi}^\ell)_{I_n},
\end{equation*}
and 
\begin{equation*}
    (N_4)_{\ell m} = \dot{\psi}^m (t_{n-1}^+) \dot{\psi}^{\ell}(t_{n-1}^+), \quad  (N_5)_{\ell m} = \psi^m (t_{n-1}^+) \psi^{\ell}(t_{n-1}^+).
\end{equation*}
Then, we have 
$$ M_n = M \otimes ( N_1 + N_4 ) + D \otimes N_2 + A \otimes ( N_3 + N_5 ),$$
where $A \otimes B$ denotes the Kronecher tensor product between the matrix A and the matrix B.

\subsection{Formulation \textit{dG1}}
%\Alberto{ Nelle $L$ ogni tanto usi pedici, ogni tanto apici. Io userei sempre pedici.}

 Proceeding similarly for \eqref{eq::wf_dgII} we obtain: 
 find $(\uu_{dG}, \vvv_{dG}) \in V^{r_n}_n \times V^{r_n}_n$ such that
\begin{equation}
	\label{eq::weak_reformulated_II}
\begin{cases}
(\dot{\uu}_{dG},\ww)_{I_n} - (\vvv_{dG},\ww)_{I_n} + \uu_{dG}(t_{n-1}^+)\cdot\ww(t_{n-1}^+) = \uu_{dG}(t_{n-1}^-)\cdot\ww(t_{n-1}^+) & \\
(M\dot{\vvv}_{dG},\zz)_{I_n} + (D\vvv_{dG},\zz)_{I_n} + (A\uu_{dG},\zz)_{I_n}  +  M\vvv_{dG}(t_{n-1}^+)\cdot\zz(t_{n-1}^+)  & \\ \qquad\qquad\qquad\qquad\qquad\qquad\qquad\qquad\qquad  = (\FF,\zz)_{I_n} + M\vvv_{dG}(t_{n-1}^-)\cdot\zz(t_{n-1}^+), & \\    
\end{cases}
\end{equation}
for any $(\ww, \zz) \in V^{r_n}_n \times V^{r_n}_n$.

Writing \eqref{eq::weak_reformulated_II}  for any test function $\boldsymbol{\Psi}_i^{\ell}(t)$, $i=1,\dots,d$, $\ell=1\,\dots,r_n+1$ we obtain the following linear system 
\begin{equation*}
	\label{Eq:StiffnessMatrix}
 \begin{bmatrix}
		I_d \otimes (L_1 + L_3) & - I_d \otimes L_2 \\
		A \otimes L_2 & M \otimes (L_1+L_3) + D \otimes L_2 
	\end{bmatrix}\begin{bmatrix}
	    \UU_n,\\ \vvv_n
	\end{bmatrix} = \begin{bmatrix}
	    \GG_n^u,\\ \GG_n^v
	\end{bmatrix},
\end{equation*}
where $\UU_n,\VV_n \in \mathbb{R}^{D_n}$ and $\GG_n^u,\GG_n^v \in \mathbb{R}^{D_n} $ are the {vectors of the expansion coefficients} of the solution and of the right-hand side in \eqref{eq::weak_reformulated_II}, respectively.
 The local time matrices $L_1,L_2$, and $L_3 \in \mathbb{R}^{r_n+1 \times r_n+1}$ are defined as
\begin{equation*}
	\label{Eq:TimeMatrices}
	(L_1)_{{\ell}m} = (\dot{\psi}^m,\psi^{\ell})_{I_n}, \qquad (L_2)_{{\ell}m} = (\psi^m,\psi^{\ell})_{I_n}, \qquad (L_3)_{{\ell}m} = \psi^m(t_{n-1}^+) \psi^{\ell}(t_{n-1}^+),
\end{equation*}
for $\ell,m=1,\ldots,r_n+1$. Similarly to \cite{ThHe2005}, and following \cite{AntoniettiMiglioriniMazzieri2021} we define
\begin{equation*}
L_4 = (L_1+L_3)^{-1}, \qquad L_5 = L_4L_2, \qquad L_6 = L_2L_4, \qquad L_7 = L_2L_4L_2.
\end{equation*}
Next, we apply a block Gaussian elimination to get
\begin{equation}\label{eq:linear_system_to_solve}
	\begin{bmatrix}
	I_{D_n} & -I_{d} \otimes L_5 \\
	0 & \widehat{M}_n
	\end{bmatrix} 
	\begin{bmatrix}
	\UU_n \\
	\VV_n
	\end{bmatrix} = 
	\begin{bmatrix}
	(I_d \otimes L_4)\GG_n^u \\
	\GG_n^v - ({I}_{d}\otimes L_6) \GG_n^u
	\end{bmatrix},
\end{equation}
where
\begin{equation}\label{eq::linear_system_dGII}
	\widehat{M}_n = M \otimes (L_1+L_3) + D \otimes L_2 + A \otimes L_7.
\end{equation}
As a solution strategy, we first compute  $\VV_n$, by solving the second equation of \eqref{eq:linear_system_to_solve} and then update  
$	\UU_n = ({I}_{d} \otimes L_5) \VV_n + ({I}_{d}\otimes L_4)\GG_n^u.
$
\\
We notice that, despite doubling the dimension of the problem at hand, formulation \eqref{eq::weak_reformulated_II} requires at each time interval the solution of a linear system of the same size of \eqref{eq::weak_reformulated_I}, cf \eqref{eq:linear_system_to_solve}. 
Finally, we recall that the solution strategy described above remains valid also for the formulations presented in \cite{rezaei2023discontinuous,AntoniettiMiglioriniMazzieri2021} where the first equation in \eqref{eq::weak_reformulated_II} is multiplied by the positive definite matrix $A$. 
\section{Numerical results}\label{sec::numerical_results}

In the following experiments, we employ the proposed dG methods to solve \eqref{Eq:SecondOrderEquation} which is obtained from the spatial discretization of different wave propagation problems with the Polytopal Discontinuous Galerkin (PolyDG) method implemented in the library \texttt{lymph} (\url{https://lymph.bitbucket.io/}), see \cite{antonietti2024lymph}.
%\Alberto{Sarebbe bello mettere il github con i test già settati, visto che mettiamo il codice.}. 
In the following, we will address the cases of wave propagation in acoustic, poroelastic, and coupled poroelastic-acoustic media. 
We suppose $\Omega \subset \mathbb{R}^2$ to be an open bounded domain, having sufficiently regular boundary $\Gamma$. 

\subsection{Wave propagation in a fluid medium}\label{sec::acoustic_example} 
%%%%%%%%%%%
\begin{figure}[!htbp]
    \centering
    \includegraphics[width=0.45\textwidth]{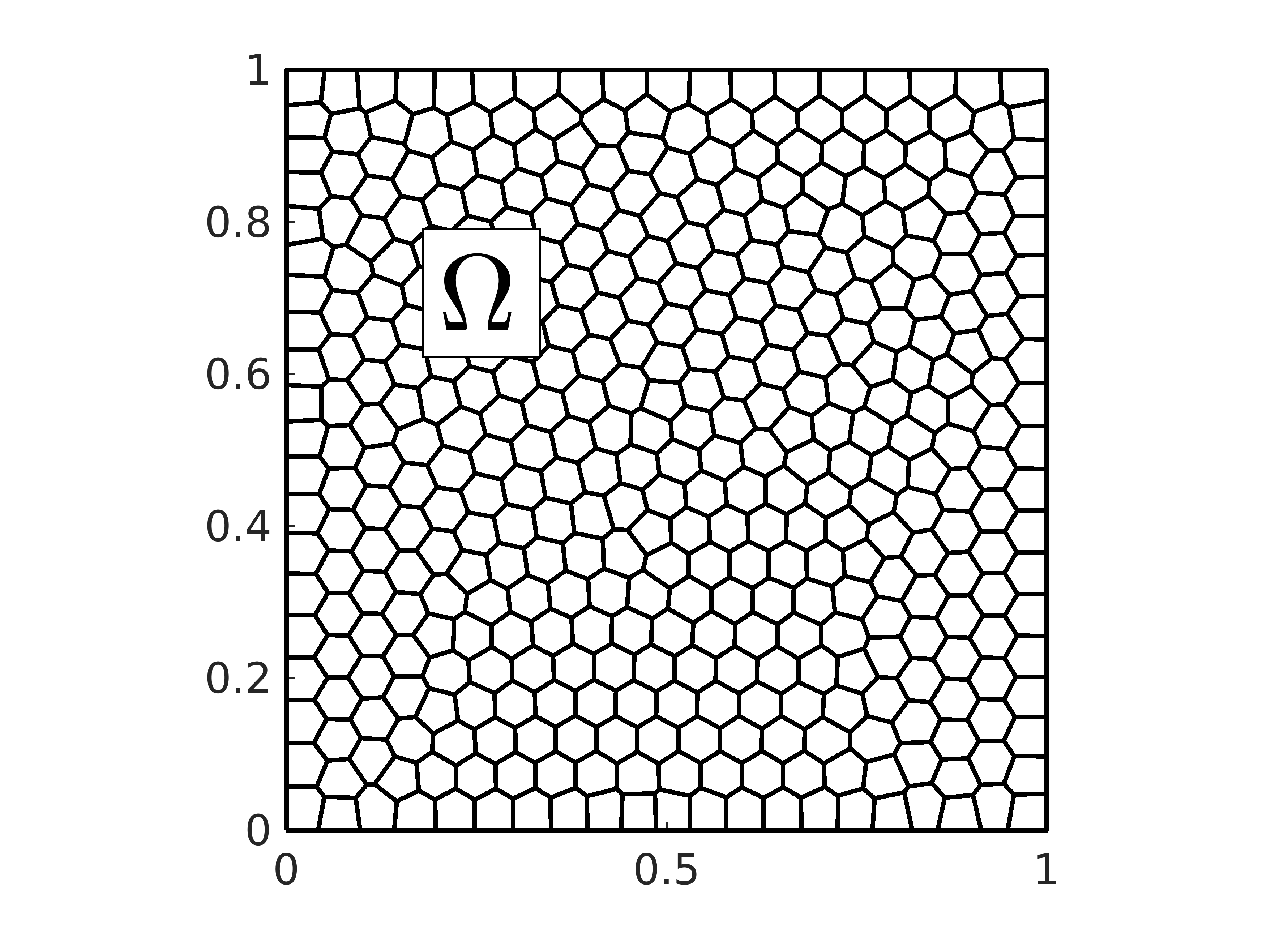}
    \includegraphics[width=0.45\textwidth]{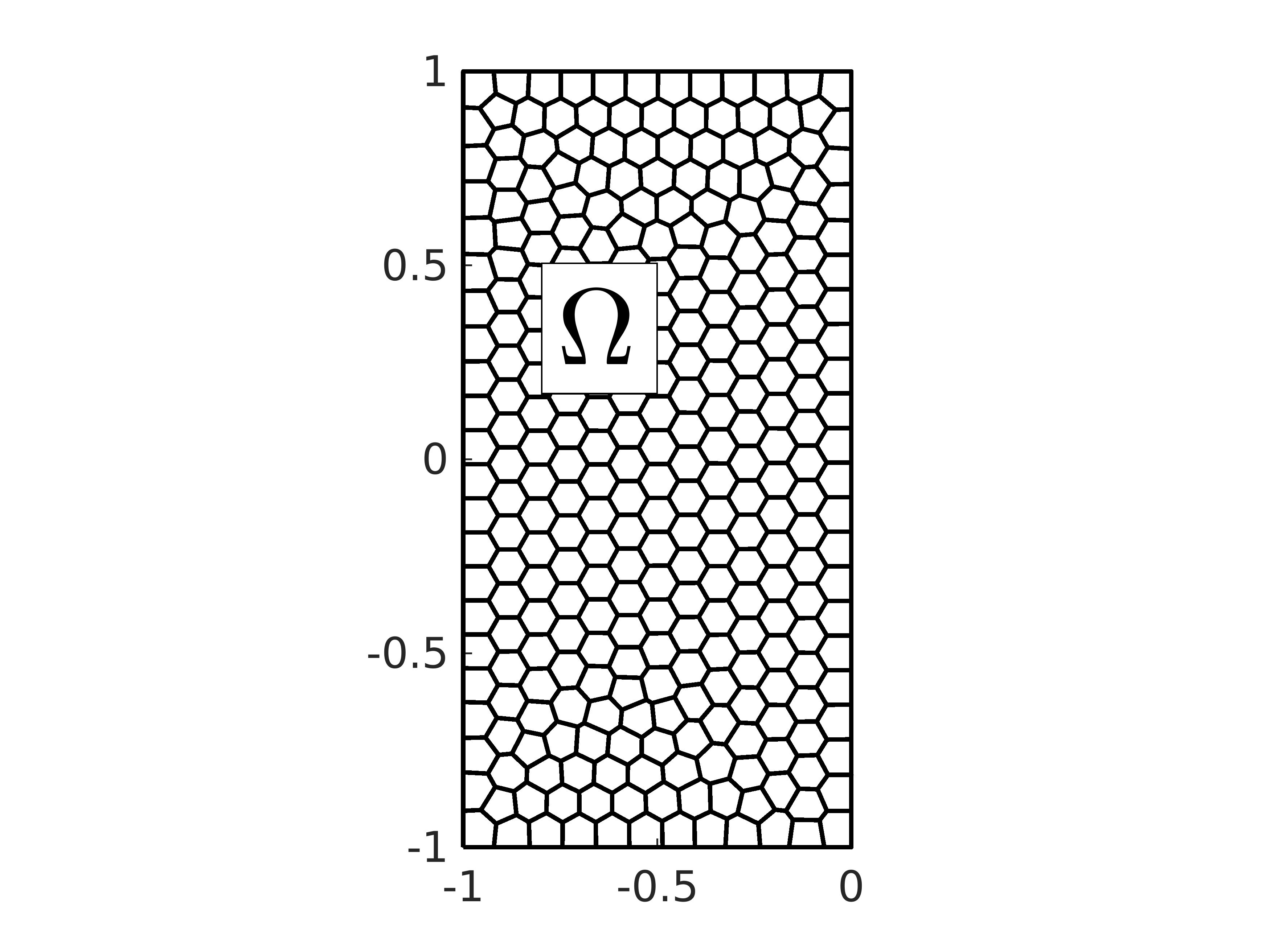}
    \caption{Polygonal meshes for the test case of Section~\ref{sec::acoustic_example} (left) and Section~\ref{sec::poro_example} (right).}
    \label{fig:acoustic-poro-domain}
\end{figure}
In a fluid domain $\Omega$ we consider the following model
\begin{equation}\label{eq::acoustic}
\begin{cases}
c^{-2}\ddot{\varphi}_a - \rho_a^{-1}  \nabla\cdot(\rho_a \nabla \varphi_a)=f_a,  & \text{in }\Omega\times(0,T],\\
  \varphi_a = g_a, & \text{on }\Gamma\times(0,T],\\  (\varphi_a, \dot{\varphi}_a)(0) = (\varphi_0, \psi_0), & \text{in }\Omega,
\end{cases}
\end{equation}
being $\varphi_a$ the acoustic potential, $\rho_a >0$ the medium density, and  $f_a, g_a, \varphi_0$ and $\psi_0$ regular enough data.
It is well known that, provided the penalty parameter is chosen large enough, the space discretization by PolydG leads to system \eqref{Eq:SecondOrderEquation} in which $M,A$ are positive definite and $D=0$, i.e., case a) of Assumption~\ref{ass:matrices}. We refer to \cite{AntoniettiBonaldi2020} for more details.  We take $\Omega=(0,1)^2$ and patition it into $400$ polygonal elements, cf. Figure~\ref{fig:acoustic-poro-domain} (left), and fix the space polynomial degree equal to $7$. Next, we consider the following manufactured solution
\begin{align*}
 \varphi_{ex}(x,t) & = \sin(\sqrt{2} \pi t) x^2\sin(\pi x)\sin(\pi y),\\
 \psi_{ex}(x,t)  & = \dot{\varphi}_{ex}(x,t), 
\end{align*}
 compute the external force $f_a$, the boundary data $g_a$, and initial conditions $\phi_0,\psi_0$ accordingly, and set $\rho_a=c_a=1$.    
For the time integration, we compare methods \textit{dG2} and \textit{dG1} described in Sections ~\ref{sec::dg_form_I} and \ref{sec::dg_form_II}, respectively. 
In Figure~\ref{fig:Energy_conv_test_dt} we report the computed energy errors $|e_\varphi|_{\calA} = |\varphi_{ex} - \varphi_{dG}|_{\calA}$, for \textit{dG2},
and $|(e_\varphi,e_\psi)|_{\calB} = |(\varphi_{ex},\psi_{ex}) - (\varphi_{dG},\psi_{dG})|_{\calB}$, for \textit{dG1}, as a function of the time step $\Delta t$. We choose a time polynomial degree $r=1,..,4$, and a final time $T=0.6$. 
It is possible to see that the results agree with the theoretical results of Section~\ref{sec:Convergence}. 
We remark that the order of convergence is verified also for higher-order polynomials ($r=3,4$), despite these cases are not explicitly addressed in our theoretical results of   Theorem~\ref{thm::conv_dgI} and Theorem~\ref{thm::conv_dgII}. 
 In Figure~\ref{fig:L2conv_test_dt} we also compare the accuracy of the solutions obtained with both \textit{dG2} and \textit{dG1} methods concerning the $L^2$-norm at final time $T$. We can observe 
an order of convergence of $\mathcal{O}(\Delta t^{r+\frac12})$ (resp. $\mathcal{O}(\Delta t^{r+\frac32})$) for the \textit{dG2} (resp. \textit{dG1}) method.
This is in agreement with results reported in \cite{CiaramellaGanderMazzieri2025}.
%%%%%%%%%%%%%%%%%%%%%%%%%%
%%%%%%%%%%%%%%%%%%%%%%%%%%
\begin{figure}[!htbp]
\centering
    % This file was created by matlab2tikz.
%
%The latest updates can be retrieved from
%  http://www.mathworks.com/matlabcentral/fileexchange/22022-matlab2tikz-matlab2tikz
%where you can also make suggestions and rate matlab2tikz.
%
\begin{tikzpicture}

\begin{axis}[%
width=0.33\textwidth,
height=0.33\textwidth,
scale only axis,
xmode=log,
xmin=0.01,
xmax=0.5,
xminorticks=true,
xlabel style={font=\color{black}},
xlabel={$\Delta t $},
ymode=log,
ymin=1e-09,
ymax=1,
yminorticks=true,
ylabel style={font=\color{black}},
ylabel={$| e_\varphi |_{\mathcal{A}}$},
axis background/.style={fill=white},
title ={$dG2$},
xmajorgrids,
xminorgrids,
ymajorgrids,
yminorgrids,
legend style={at={(0.65,0.05)}, anchor=south west, legend cell align=left, align=left, draw=black}
]
\addplot [color=blue, line width=2.0pt, mark=o, mark options={solid, blue}]
  table[row sep=crcr]{% 
     2.000000000000000e-01     2.628074134353042e-01 \\
     1.000000000000000e-01     3.296270132879754e-01 \\
     5.000000000000000e-02     2.782793473302495e-01 \\
     2.000000000000000e-02     1.910666900967379e-01 \\
};
\addlegendentry{$r=1$}

\addplot [color=red, line width=2.0pt, mark=asterisk, mark options={solid, red}]
  table[row sep=crcr]{%
     2.000000000000000e-01     6.063459234531784e-02\\
     1.000000000000000e-01     2.299624571395244e-02\\
     5.000000000000000e-02     8.607811509773690e-03\\
     2.000000000000000e-02     2.416657827371976e-03 \\
};
\addlegendentry{$r=2$}

\addplot [color=green, line width=2.0pt, mark=square, mark options={solid, green}]
  table[row sep=crcr]{%
     2.000000000000000e-01     6.461516801054583e-03\\
     1.000000000000000e-01     1.349965008894389e-03\\
     5.000000000000000e-02     2.490685147155186e-04\\
     2.000000000000000e-02     2.581392948773486e-05\\
};
\addlegendentry{$r=3$}

\addplot [color=cyan, line width=2.0pt, mark=diamond, mark options={solid, cyan}]
  table[row sep=crcr]{%
     2.000000000000000e-01     3.622737861344061e-04\\
     1.000000000000000e-01     3.415051607552126e-05\\
     5.000000000000000e-02     3.217612062356779e-06\\
     2.000000000000000e-02     1.479781315881082e-07\\
};
\addlegendentry{$r=4$}

\addplot [color=black, forget plot]
  table[row sep=crcr]{%
0.05	0.278\\
0.02	0.176\\
};

\addplot [color=black, forget plot]
  table[row sep=crcr]{%
0.05	0.176\\
0.02	0.176\\
};
\addplot [color=black, forget plot]
  table[row sep=crcr]{%
0.05	0.278\\
0.05	0.176\\
};

%% line 3
\addplot [color=black, forget plot]
  table[row sep=crcr]{%
0.05	8.607811509773690e-03 \\
0.02	2.177623203223807e-03\\
};
\addplot [color=black, forget plot]
  table[row sep=crcr]{%
0.05	2.177623203223807e-03\\
0.02	2.177623203223807e-03\\
};
\addplot [color=black, forget plot]
  table[row sep=crcr]{%
0.05	8.607811509773690e-03 \\
0.05	2.177623203223807e-03\\
};

\addplot [color=black, forget plot]
  table[row sep=crcr]{%
0.05	2.490685147155186e-04\\
0.02	2.520396159795851e-05\\
};
\addplot [color=black, forget plot]
  table[row sep=crcr]{%
0.05	2.520396159795851e-05\\
0.02	2.520396159795851e-05\\
};
\addplot [color=black, forget plot]
  table[row sep=crcr]{%
0.05	2.490685147155186e-04\\
0.05	2.520396159795851e-05\\
};

\addplot [color=black, forget plot]
  table[row sep=crcr]{%
0.05	3.217612062356779e-06 \\
0.02	1.302397791216530e-07\\
};
\addplot [color=black, forget plot]
  table[row sep=crcr]{%
0.05	1.302397791216530e-07\\
0.02	1.302397791216530e-07\\
};
\addplot [color=black, forget plot]
  table[row sep=crcr]{%
0.05	3.217612062356779e-06 \\
0.05	1.302397791216530e-07\\
};

\node[right, align=left, text=black, font=\normalsize]
at (axis cs:0.05,0.2) {$0.5$};

\node[right, align=left, text=black, font=\normalsize]
at (axis cs:0.05,0.005) {$1.5$};

\node[right, align=left, text=black, font=\normalsize]
at (axis cs:0.05,0.0001) {$2.5$};

\node[right, align=left, text=black, font=\normalsize]
at (axis cs:0.05,0.0000008) {$3.5$};

\end{axis}

\end{tikzpicture}% 
       % This file was created by matlab2tikz.
%
%The latest updates can be retrieved from
%  http://www.mathworks.com/matlabcentral/fileexchange/22022-matlab2tikz-matlab2tikz
%where you can also make suggestions and rate matlab2tikz.
%
\begin{tikzpicture}

\begin{axis}[%
width=0.33\textwidth,
height=0.33\textwidth,
scale only axis,
xmode=log,
xmin=0.01,
xmax=0.5,
xminorticks=true,
xlabel style={font=\color{black}},
xlabel={$\Delta t $},
ymode=log,
ymin=1e-09,
ymax=1,
yminorticks=true,
ylabel style={font=\color{black}},
ylabel={$|(e_\varphi,e_\psi)|_{\calB}$},
axis background/.style={fill=white},
title ={$dG1$},
xmajorgrids,
xminorgrids,
ymajorgrids,
yminorgrids,
legend style={at={(0.65,0.05)}, anchor=south west, legend cell align=left, align=left, draw=black}
]
\addplot [color=blue, line width=2.0pt, mark=o, mark options={solid, blue}]
  table[row sep=crcr]{% 
     2.000000000000000e-01     2.342658529176124e-01 \\
     1.000000000000000e-01     1.249082459270910e-01 \\
     5.000000000000000e-02     4.917641987742438e-02 \\
     2.000000000000000e-02     1.308057504395111e-02 \\
};
\addlegendentry{$r=1$}

\addplot [color=red, line width=2.0pt, mark=asterisk, mark options={solid, red}]
  table[row sep=crcr]{%
     2.000000000000000e-01     1.823047354888938e-02\\
     1.000000000000000e-01     4.267262215448603e-03\\
     5.000000000000000e-02     8.370799345739045e-04\\
     2.000000000000000e-02     9.173791081965823e-05\\
};
\addlegendentry{$r=2$}

\addplot [color=green, line width=2.0pt, mark=square, mark options={solid, green}]
  table[row sep=crcr]{%
     2.000000000000000e-01     9.428013038118890e-04\\
     1.000000000000000e-01     1.294019804122741e-04\\
     5.000000000000000e-02     1.295639162176570e-05\\
     2.000000000000000e-02     5.659586736462439e-07\\
};
\addlegendentry{$r=3$}

\addplot [color=cyan, line width=2.0pt, mark=diamond, mark options={solid, cyan}]
  table[row sep=crcr]{%
     2.000000000000000e-01     6.735485189797528e-05\\
     1.000000000000000e-01     3.328985044423858e-06\\
     5.000000000000000e-02     1.550458527983530e-07\\
     2.000000000000000e-02     2.767015673443082e-09\\
};
\addlegendentry{$r=4$}

\addplot [color=black, forget plot]
  table[row sep=crcr]{%
0.05	4.917641987742438e-02\\
0.02	1.244075951883515e-02\\
};

\addplot [color=black, forget plot]
  table[row sep=crcr]{%
0.05	1.244075951883515e-02\\
0.02	1.244075951883515e-02\\
};
\addplot [color=black, forget plot]
  table[row sep=crcr]{%
0.05	4.917641987742438e-02\\
0.05	1.244075951883515e-02\\
};

%% line 3
\addplot [color=black, forget plot]
  table[row sep=crcr]{%
0.05	8.370799345739045e-04 \\
0.02	8.470653366010451e-05\\
};
\addplot [color=black, forget plot]
  table[row sep=crcr]{%
0.05	8.470653366010451e-05\\
0.02	8.470653366010451e-05\\
};
\addplot [color=black, forget plot]
  table[row sep=crcr]{%
0.05	8.470653366010451e-05 \\
0.05	8.370799345739045e-04\\
};

\addplot [color=black, forget plot]
  table[row sep=crcr]{%
0.05	1.295639162176570e-05\\
0.02	5.244378596083509e-07\\
};
\addplot [color=black, forget plot]
  table[row sep=crcr]{%
0.05	5.244378596083509e-07\\
0.02	5.244378596083509e-07\\
};
\addplot [color=black, forget plot]
  table[row sep=crcr]{%
0.05	5.244378596083509e-07\\
0.05	1.295639162176570e-05\\
};

\addplot [color=black, forget plot]
  table[row sep=crcr]{%
0.05	1.550458527983530e-07\\
0.02	2.510325947422651e-09\\
};
\addplot [color=black, forget plot]
  table[row sep=crcr]{%
0.05	2.510325947422651e-09\\
0.02	2.510325947422651e-09\\
};
\addplot [color=black, forget plot]
  table[row sep=crcr]{%
0.05	2.510325947422651e-09\\
0.05	1.550458527983530e-07\\
};

\node[right, align=left, text=black, font=\normalsize]
at (axis cs:0.05,0.02) {$1.5$};

\node[right, align=left, text=black, font=\normalsize]
at (axis cs:0.05,0.0003) {$2.5$};

\node[right, align=left, text=black, font=\normalsize]
at (axis cs:0.05,0.000003) {$3.5$};

\node[right, align=left, text=black, font=\normalsize]
at (axis cs:0.05,0.00000003) {$4.5$};

\end{axis}

\end{tikzpicture}% 
\caption{Test case of Section~\ref{sec::acoustic_example}. 
Computed errors $|e_\varphi|_{\calA}$ for the \textit{dG2} (left), and  $|(e_\varphi,e_\psi)|_{\calB}$ for the \textit{dG1} (right) in logarithmic scale as a function of the time step $\Delta t$ for different polynomial degrees $r=1,2,3,4$. We set $N_{el}=400$ polygonal elements and a space polynomial degree equal to $7$.}
\label{fig:Energy_conv_test_dt}
\end{figure}
%%%%%%%%%%%%%
%%%%%%%%%%%%%
\begin{figure}[!htbp]
\centering
    % This file was created by matlab2tikz.
%
%The latest updates can be retrieved from
%  http://www.mathworks.com/matlabcentral/fileexchange/22022-matlab2tikz-matlab2tikz
%where you can also make suggestions and rate matlab2tikz.
%
\begin{tikzpicture}

\begin{axis}[%
width=0.31\textwidth,
height=0.31\textwidth,
scale only axis,
xmode=log,
xmin=0.01,
xmax=0.5,
xminorticks=true,
xlabel style={font=\color{black}},
xlabel={$\Delta t $},
ymode=log,
ymin=1e-16,
ymax=1,
yminorticks=true,
ylabel style={font=\color{black}},
ylabel={$\| e_\varphi(T) \|_0 $},
axis background/.style={fill=white},
title ={$dG2$},
xmajorgrids,
xminorgrids,
ymajorgrids,
yminorgrids,
legend style={at={(0.65,0.05)}, anchor=south west, legend cell align=left, align=left, draw=black}
]
\addplot [color=blue, line width=2.0pt, mark=o, mark options={solid, blue}]
  table[row sep=crcr]{% 
     2.000000000000000e-01     3.610161928807072e-02\\
     1.000000000000000e-01     2.001775336144223e-02\\
     5.000000000000000e-02     1.001685286449612e-02\\
     2.000000000000000e-02     3.603545092321021e-03\\
};
\addlegendentry{$r=1$}

\addplot [color=red, line width=2.0pt, mark=asterisk, mark options={solid, red}]
  table[row sep=crcr]{%
     2.000000000000000e-01     1.378156398327358e-03 \\
     1.000000000000000e-01     1.494969159903137e-04 \\
     5.000000000000000e-02     1.695570687628642e-05 \\
     2.000000000000000e-02     1.165037673849501e-06 \\
};
\addlegendentry{$r=2$}

\addplot [color=green, line width=2.0pt, mark=square, mark options={solid, green}]
  table[row sep=crcr]{%
     2.000000000000000e-01     8.465537531723704e-06\\
     1.000000000000000e-01     2.755916842454628e-07\\
     5.000000000000000e-02     1.207318415062316e-08\\
     2.000000000000000e-02     2.856729581048892e-10\\
};
\addlegendentry{$r=3$}

\addplot [color=cyan, line width=2.0pt, mark=diamond, mark options={solid, cyan}]
  table[row sep=crcr]{%
     2.000000000000000e-01     1.109343089955558e-07\\
     1.000000000000000e-01     8.126602350010218e-10\\
     5.000000000000000e-02     6.430321140445857e-12\\
     2.000000000000000e-02     1.784370010700440e-12\\
};
\addlegendentry{$r=4$}

\addplot [color=black, forget plot]
  table[row sep=crcr]{%
0.05	1.001685286449612e-02\\
0.02	4.006741145798448e-03\\
};

\addplot [color=black, forget plot]
  table[row sep=crcr]{%
0.05	4.006741145798448e-03\\
0.02	4.006741145798448e-03\\
};
\addplot [color=black, forget plot]
  table[row sep=crcr]{%
0.05	1.001685286449612e-02\\
0.05	4.006741145798448e-03\\
};

%% line 3
\addplot [color=black, forget plot]
  table[row sep=crcr]{%
0.05	1.695570687628642e-05 \\
0.02	1.715796898151805e-06\\
};
\addplot [color=black, forget plot]
  table[row sep=crcr]{%
0.05	1.715796898151805e-06\\
0.02	1.715796898151805e-06\\
};
\addplot [color=black, forget plot]
  table[row sep=crcr]{%
0.05	1.695570687628642e-05 \\
0.05	1.715796898151805e-06\\
};

\addplot [color=black, forget plot]
  table[row sep=crcr]{%
0.05	1.207318415062316e-08\\
0.02	4.886881347406662e-10\\
};
\addplot [color=black, forget plot]
  table[row sep=crcr]{%
0.05	4.886881347406662e-10\\
0.02	4.886881347406662e-10\\
};
\addplot [color=black, forget plot]
  table[row sep=crcr]{%
0.05	1.207318415062316e-08\\
0.05	4.886881347406662e-10\\
};

\addplot [color=black, forget plot]
  table[row sep=crcr]{%
0.05	 6.430321140445857e-12\\
0.02	1.041124397575188e-13\\
};
\addplot [color=black, forget plot]
  table[row sep=crcr]{%
0.05	1.041124397575188e-13\\
0.02	1.041124397575188e-13\\
};
\addplot [color=black, forget plot]
  table[row sep=crcr]{%
0.05	 6.430321140445857e-12\\
0.05	1.041124397575188e-13\\
};

\node[right, align=left, text=black, font=\normalsize]
at (axis cs:0.05,0.005) {$1$};

\node[right, align=left, text=black, font=\normalsize]
at (axis cs:0.05,0.000007) {$2.5$};

\node[right, align=left, text=black, font=\normalsize]
at (axis cs:0.05,0.000000003) {$3.5$};

\node[right, align=left, text=black, font=\normalsize]
at (axis cs:0.05,1.e-12) {$4.5$};

\end{axis}

\end{tikzpicture}% 
       % This file was created by matlab2tikz.
%
%The latest updates can be retrieved from
%  http://www.mathworks.com/matlabcentral/fileexchange/22022-matlab2tikz-matlab2tikz
%where you can also make suggestions and rate matlab2tikz.
%
\begin{tikzpicture}

\begin{axis}[%
width=0.31\textwidth,
height=0.31\textwidth,
scale only axis,
xmode=log,
xmin=0.01,
xmax=0.5,
xminorticks=true,
xlabel style={font=\color{black}},
xlabel={$\Delta t $},
ymode=log,
ymin=1e-16,
ymax=1,
yminorticks=true,
ylabel style={font=\color{black}},
ylabel={$\|  e_\varphi(T) \|_0 $},
axis background/.style={fill=white},
title ={$dG1$},
xmajorgrids,
xminorgrids,
ymajorgrids,
yminorgrids,
legend style={at={(0.65,0.05)}, anchor=south west, legend cell align=left, align=left, draw=black}
]
\addplot [color=blue, line width=2.0pt, mark=o, mark options={solid, blue}]
  table[row sep=crcr]{% 
     2.000000000000000e-01     3.132871638013428e-02\\
     1.000000000000000e-01     1.189962884566634e-02\\
     5.000000000000000e-02     3.430657539125001e-03\\
     2.000000000000000e-02     6.008647594894234e-04\\
};
\addlegendentry{$r=1$}

\addplot [color=red, line width=2.0pt, mark=asterisk, mark options={solid, red}]
  table[row sep=crcr]{%
     2.000000000000000e-01     4.328106204533199e-04\\
     1.000000000000000e-01     4.099823072578736e-05\\
     5.000000000000000e-02     2.913434535096544e-06\\
     2.000000000000000e-02     8.073412490949890e-08\\
};
\addlegendentry{$r=2$}

\addplot [color=green, line width=2.0pt, mark=square, mark options={solid, green}]
  table[row sep=crcr]{%
     2.000000000000000e-01     4.745643831193803e-06\\
     1.000000000000000e-01     8.264836643450768e-08\\
     5.000000000000000e-02     1.341952400937620e-09\\
     2.000000000000000e-02     5.666921342022546e-12\\
};
\addlegendentry{$r=3$}

\addplot [color=cyan, line width=2.0pt, mark=diamond, mark options={solid, cyan}]
  table[row sep=crcr]{%
     2.000000000000000e-01     3.688271785860382e-08\\
     1.000000000000000e-01     2.119324600468031e-10\\
     5.000000000000000e-02     1.180672814229561e-12\\
     2.000000000000000e-02     1.991036109957030e-13\\
};
\addlegendentry{$r=4$}

\addplot [color=black, forget plot]
  table[row sep=crcr]{%
0.05	3.430657539125001e-03\\
0.02	3.471581342612229e-04\\
};

\addplot [color=black, forget plot]
  table[row sep=crcr]{%
0.05	3.471581342612229e-04\\
0.02	3.471581342612229e-04\\
};
\addplot [color=black, forget plot]
  table[row sep=crcr]{%
0.05	3.430657539125001e-03\\
0.05	3.471581342612229e-04\\
};

%% line 3
\addplot [color=black, forget plot]
  table[row sep=crcr]{%
0.05	2.913434535096544e-06  \\
0.02	1.179275384921452e-07\\
};
\addplot [color=black, forget plot]
  table[row sep=crcr]{%
0.05	1.179275384921452e-07\\
0.02	1.179275384921452e-07\\
};
\addplot [color=black, forget plot]
  table[row sep=crcr]{%
0.05	2.913434535096544e-06  \\
0.05	1.179275384921452e-07\\
};

\addplot [color=black, forget plot]
  table[row sep=crcr]{%
0.05	1.379113777810375e-09\\
0.02	2.232904033484885e-11\\
};
\addplot [color=black, forget plot]
  table[row sep=crcr]{%
0.05	2.232904033484885e-11\\
0.02	2.232904033484885e-11\\
};
\addplot [color=black, forget plot]
  table[row sep=crcr]{%
0.05	1.379113777810375e-09\\
0.05	2.232904033484885e-11\\
};

\addplot [color=black, forget plot]
  table[row sep=crcr]{%
0.05	1.180672814229561e-12\\
0.02	7.646444061504041e-15\\
};
\addplot [color=black, forget plot]
  table[row sep=crcr]{%
0.05	7.646444061504041e-15\\
0.02	7.646444061504041e-15\\
};
\addplot [color=black, forget plot]
  table[row sep=crcr]{%
0.05	1.180672814229561e-12\\
0.05	7.646444061504041e-15\\
};

\node[right, align=left, text=black, font=\normalsize]
at (axis cs:0.05,0.001) {$2.5$};

\node[right, align=left, text=black, font=\normalsize]
at (axis cs:0.05,0.0000007) {$3.5$};

\node[right, align=left, text=black, font=\normalsize]
at (axis cs:0.05,0.0000000003) {$4.5$};

\node[right, align=left, text=black, font=\normalsize]
at (axis cs:0.05,1.e-13) {$5.5$};

\end{axis}

\end{tikzpicture}% 
\caption{Test case of Section~\ref{sec::acoustic_example}. 
Computed errors $\|e_\varphi\|_{0}$ for the \textit{dG2} (left) and \textit{dG1} (right) method in logarithmic scale as a function of the time step $\Delta t$ for different polynomial degrees $r=1,2,3,4$. We set $N_{el}=400$ polygonal elements and a space polynomial degree equal to $7$.}\label{fig:L2conv_test_dt}
\end{figure}
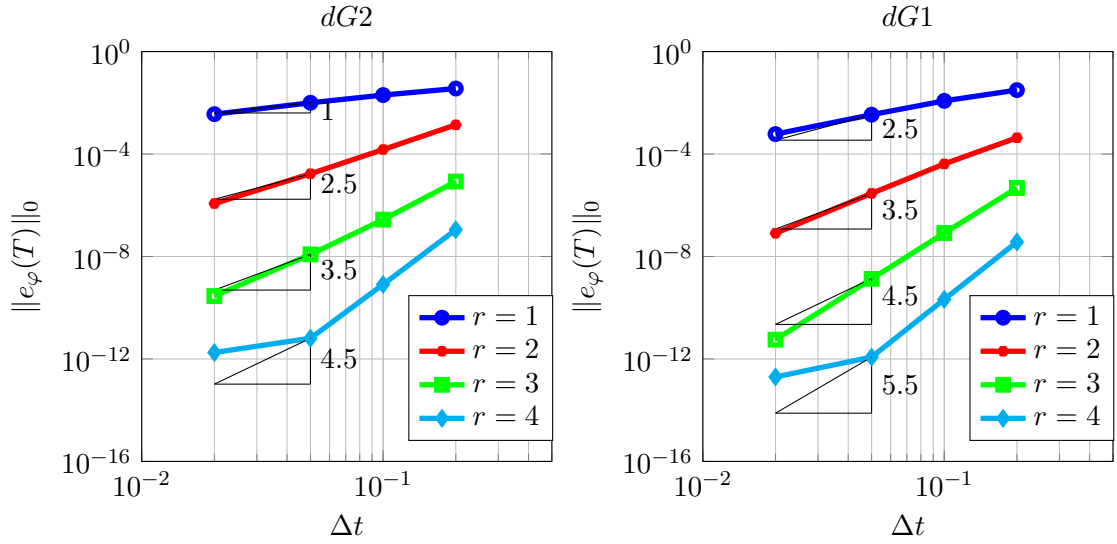
%%%%%%%%%%%%%
%%%%%%%%%%%%%
Moreover, in Figure~\ref{fig:acoustic_conv_r}, we present the computed errors of both time-stepping dG schemes versus  the polynomial degree $r$, while maintaining a fixed time step of $\Delta t = 0.02$. Notably, both methods achieve comparable levels of accuracy, with a slightly superior performance observed for the \textit{dG1} scheme.
%%%%%%%%%%%%
%%%%%%%%%%%%%
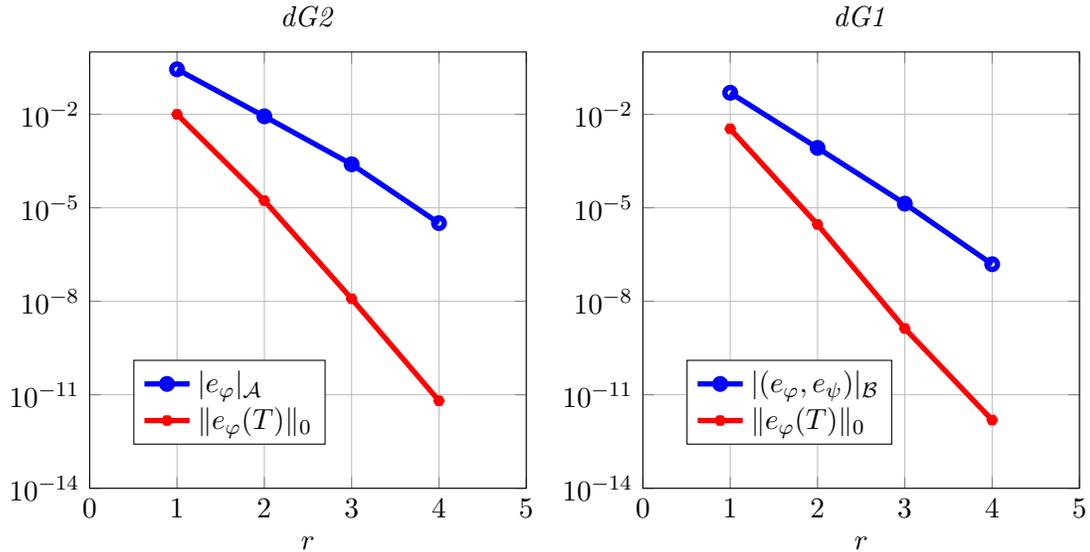
\begin{figure}[!htbp]
\centering
    \begin{tikzpicture}

\begin{axis}[%
width=0.33\textwidth,
height=0.33\textwidth,
scale only axis,
scale only axis,
xmin=0,
xmax=5,
xlabel style={font=\color{black}},
xlabel={$r$},
ymode=log,
ymin=1.e-14,
ymax=1,
yminorticks=true,
ylabel style={font=\color{black}},
title={\textit{dG2}},
axis background/.style={fill=white},
xmajorgrids,
ymajorgrids,
yminorgrids,
legend style={at={(0.1,0.1)}, anchor=south west, legend cell align=left, align=left, draw=black}
]
\addplot [color=blue, line width=2.0pt, mark=o, mark options={solid, blue}]
  table[row sep=crcr]{% 
1	2.782793473302495e-01\\
2	8.607811509773690e-03\\
3	2.490685147155186e-04\\
4	3.217088020075614e-06\\
};
\addlegendentry{$|e_\varphi|_{\calA} $}

\addplot [color=red, line width=2.0pt, mark=asterisk, mark options={solid, red}]
  table[row sep=crcr]{%
1	1.001685286449612e-02\\
2	1.695570687628642e-05\\
3	1.207318415062316e-08\\
4	6.430321140445857e-12\\
};
\addlegendentry{$\|e_\varphi(T)\|_{0}$}

\end{axis}

\end{tikzpicture}% 
       \begin{tikzpicture}

\begin{axis}[%
width=0.33\textwidth,
height=0.33\textwidth,
scale only axis,
scale only axis,
xmin=0,
xmax=5,
xlabel style={font=\color{black}},
xlabel={$r$},
ymode=log,
ymin=1.e-14,
ymax=1,
yminorticks=true,
ylabel style={font=\color{black}},
title={\textit{dG1}},
axis background/.style={fill=white},
xmajorgrids,
ymajorgrids,
yminorgrids,
legend style={at={(0.1,0.1)}, anchor=south west, legend cell align=left, align=left, draw=black}
]
\addplot [color=blue, line width=2.0pt, mark=o, mark options={solid, blue}]
  table[row sep=crcr]{% 
1	4.917641987742438e-02\\
2	8.370799345739045e-04\\
3	1.356810995003859e-05\\
4	1.550458527983530e-07\\
};
\addlegendentry{$|(e_\varphi,e_{\psi})|_{\calB} $}

\addplot [color=red, line width=2.0pt, mark=asterisk, mark options={solid, red}]
  table[row sep=crcr]{%
1	3.430657539125001e-03\\
2	2.913434535096544e-06\\
3	1.341952400937620e-09\\
4	1.543110124039874e-12\\
};
\addlegendentry{$\|e_\varphi(T)\|_{0}$}

\end{axis}

\end{tikzpicture}% 
\caption{Test case of Section~\ref{sec::acoustic_example}. 
Computed errors for the \textit{dG2} (left) and \textit{dG1} (right) method in semilogarithmic scale as a function of the polynomial degree $r$ for $\Delta t = 0.02$. We use $N_{el}=400$ polygonal elements and a space polynomial degree equal to 
$7$.}\label{fig:acoustic_conv_r}
\end{figure}
%%%%%%%%%%%%
%%%%%%%%%%%%%
%%%%%%%%%%%%
%%%%%%%%%%%%%

%%%%%
\begin{figure}[!htbp]
\centering
    \input{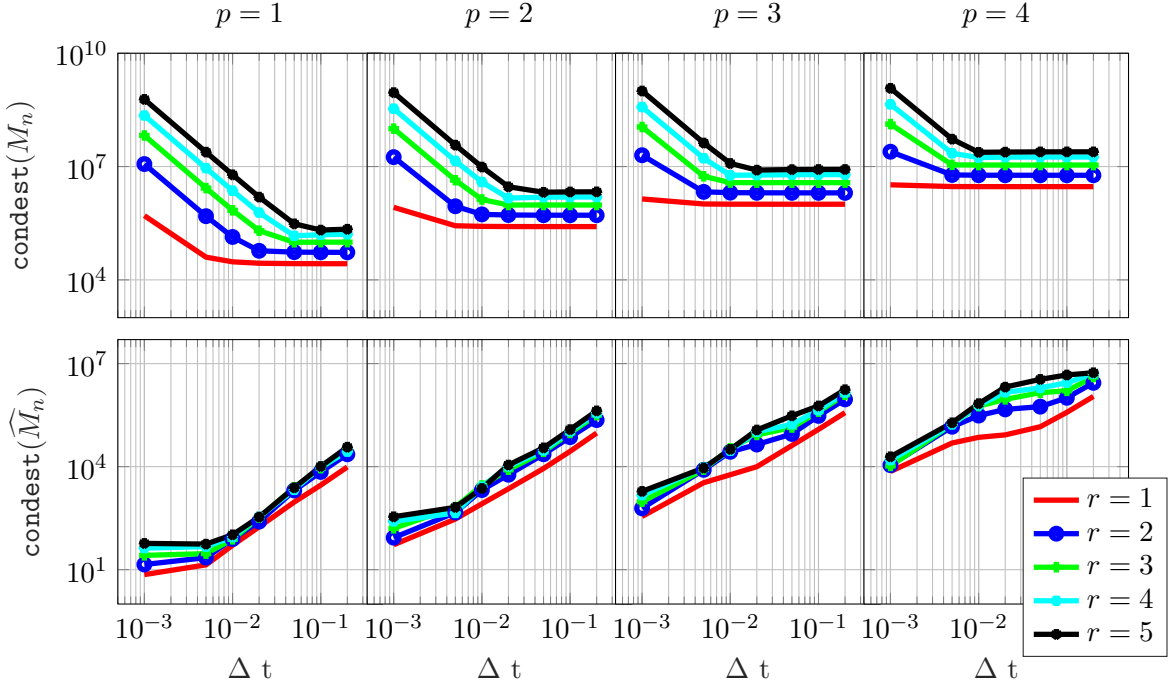} 
\caption{Test case of Section~\ref{sec::acoustic_example}. 
Computed condition number for the \textit{dG2} (top row) and \textit{dG1} (bottom row) discretization in semi-logarithmic scale as a function of the time step $\Delta t$ for different time polynomial degree $r$ and different space polynomial degree $p$. 
}\label{fig:acoustic_cond}
\end{figure}
Finally, we compare the condition number of the system matrix $M_n$ in \eqref{Eq:LinearSystem_I} (resp. $\widehat{M}_n$ in \eqref{eq::linear_system_dGII}) for the \textit{dG2} (resp. \textit{dG1}). 
In Figure~\ref{fig:acoustic_cond} we report the estimated condition numbers (Matlab function \texttt{condest}) by fixing $400$ mesh elements and varying the space polynomial degree $p$, the time step $\Delta t$ and the time polynomial degree $r$. 
%\todo{Condest è molto inaccurato, in teoria bisogna guardare davvero lo spettro, ma amen}
For a fixed space polynomial degree 
$p$, we observe a contrasting trend in the condition numbers of the matrices as the time step decreases. While on the one hand, the condition number of $M_n$ increases, on the other hand, the {condition number}  of $\widehat{M}_n$ decreases. 
Specifically, we note that the {condition number} of matrix $\widehat{M}_n$ consistently remains lower than that of $M_n$, especially noticeable for smaller valued of $\Delta t$. Additionally, we observe a correlation between the increase in {condition number} and the polynomial degree $r$.
%%%
\textcolor{black}{The results suggest that \textit{dG1} method outperforms} the \textit{dG2} one, at least for the considered test case. Consequently, in the next sections, we will exclusively consider the \textit{dG1} method.
%\FloatBarrier

\subsection{Wave propagation in a poroelastic medium}\label{sec::poro_example}
In a poroelastic domain $\Omega$, we consider the low-frequency Biot's equations \cite{biot1}:
%%%%%%%
\begin{equation}\label{eq::poroelasticity}
\begin{cases}
\rho_p\ddot{\bm{u}}_p + 
\rho_f\ddot{\bm{u}}_f
+ 2\rho_p \zeta \dot{\bm{u}}_p  + \rho_p\zeta^2 \bm{u}_p
-\nabla\cdot\bm{\sigma}_p(\bm{u}_p,\bm{u}_f)=\bm{f}_p,  &\text{in }\Omega\times(0,T],\\
\rho_f\ddot{\bm{u}}_p + 
\rho_w\ddot{\bm{u}}_f + 
\frac{\eta}{k}\dot{\bm{u}}_f
+\nabla p_p(\bm{u}_p,\bm{u}_f)=\bm{f}_f,  &\text{in }\Omega\times(0,T],\\
\bm{\sigma}_p(\bm{u}_p,\bm{u}_f) = \bm{\sigma}_e(\bm{u}_p) 
-\beta\, p_p(\bm{u}_p,\bm{u}_f) \bm{I}, &\text{in }\Omega\times(0,T],\\
 p_p(\bm{u}_p,\bm{u}_f) = -m(\beta \nabla\cdot\bm{u}_p+\nabla\cdot \bm{u}_f),
&\text{in }\Omega\times(0,T],\\
(\bm u_p,\bm u_f)  = (\bm g_p, \bm g_f), & \text{on }\Gamma\times(0,T],\\
  (\bm u_p, \dot{\bm{u}}_p)(0) = (\bm u_{p0}, \bm v_{p0}), & \text{in }\Omega, \\
  (\bm u_f, \dot{\bm{u}}_f)(0) = (\bm u_{f0}, \bm v_{f0}), & \text{in }\Omega.
\end{cases}
\end{equation}
In system \eqref{eq::poroelasticity}, $\bm{u}_p$ and $\bm{u}_f$ represent the solid and filtration displacements, respectively, $\rho_p$ is the average density given by $\rho_p=\phi\rho_f+(1-\phi)\rho_s$, where $\rho_s>0$ is the solid density, $\rho_f>0$ is the saturating fluid density,  $\rho_w$ is defined as $\rho_w=\frac{a}{\phi}\rho_f$, being $\phi$ the porosity satisfying $0<\phi_0\leq\phi\leq\phi_1<1$, and being $a>1$ the tortuosity measuring the deviation of the fluid paths from straight streamlines. The dynamic viscosity of the fluid is given by $\eta>0$, the absolute permeability by $k>0$, while the Biot--Willis's coefficient $\beta$ and the  Biot's modulus  $m$  are such that $\phi<\beta\le1$ and $m\ge m_0>0$. In  \eqref{eq::poroelasticity}, $\bm{f}_p,\bm{f}_f,\bm g_p,\bm g_f, \bm u_{p0}, \bm v_{p0}, \bm u_{f0}$, and $\bm v_{f0}$ are given (regular enough) data.\\

By applying a PolydG space discretization as in \cite{AntoniettiMazzieriNatipoltri2021}, we obtain the system \eqref{Eq:SecondOrderEquation} in which $M,D$ are positive definite and $A$ is positive semidefinite, cf. case b) of Assumption~\ref{ass:matrices}. 
%\Alberto{ Non ho capito: sbaglio o per questo secondo test non c'è la formulazione dG2?} \\
%To compare the behavior of the \textit{dG2} and \textit{dG1} methods described in Sections ~\ref{sec::dg_form_I} and \ref{sec::dg_form_II}, respectively, 
We consider $\Omega=(-1,0) \times (-1,1)$ partitioned into $300$ polygonal elements and fix the space polynomial degree equal to $6$. We consider a problem having the following exact solution
\begin{align*}
 \bm u_{p,ex}(x,t) & = \cos(\sqrt{2}\pi t)(x^2\cos(\pi x/2)\sin(\pi x),x^2\cos(\pi x/2)\sin(\pi x))^T, \\
 \bm u_{f,ex}(x,t) & = -\bm u_{p,ex}(x,t), \\
 \bm v_{p,ex}(x,t) & = \dot{\bm u}_{p,ex}(x,t), \\
 \bm v_{f,ex}(x,t) & = \dot{\bm u}_{f,ex}(x,t), 
\end{align*}
and compute external forces, boundary data, and initial conditions accordingly. 
The material parameters for this example are listed in Table  \ref{tab:mate_param_poro}.
%%%%%%%%%%%%%%%%%
\begin{table}
    \centering
    \begin{tabular}{c|c|c|c|c|c|c|c|c|c|c}
 \textbf{Field} & $\rho_f$ & $\rho$   &  $\lambda$ & $\mu$ & $a$ & $\phi$ & $\eta$ & $\rho_w$ & $\beta$ & $m$\\
 \hline
  \textbf{Value} &  1 &  1& 1& 1& 1& 0.5 & 1 & 2 & 1 & 1
    \end{tabular}
    \caption{Test case of Section~\ref{sec::poro_example}: physical parameters.
 %   \Alberto{Arricchirei la tabella facendola più simile al caso poroelasto acustico.}
    }
    \label{tab:mate_param_poro}
\end{table}
%%%%%%%%%%%%%%%%%%%%%%%%%%%%%%%%%%%%%%%%%%%%%%%%%%%

In Figure~\ref{fig:Energy_poro_conv_test_dt} we plot the {computed} energy errors $|(\bm e_{ u}, \bm e_{v})|_{\calB} = |(\bm u_{ex},\bm v_{ex}) - (\bm u_{dG},\bm v_{dG})|_{\calB}$ for the  \textit{dG1} method as a function of the time step $\Delta t$, for polynomial degrees of time $r=1,..,4$ and final time $T=0.6$. 
{The results reported in Figure~\ref{fig:Energy_poro_conv_test_dt} have been obtained with the \textit{dG1} formulation described in Section~\ref{sec::dg_form_I}.}
It is possible to {observe} that the results agree with the theoretical findings in Section~\ref{sec:Convergence}, cf. Theorem ~\ref{thm::conv_dgII}.
\begin{figure}[!htbp]
\centering
    % This file was created by matlab2tikz.
%
%The latest updates can be retrieved from
%  http://www.mathworks.com/matlabcentral/fileexchange/22022-matlab2tikz-matlab2tikz
%where you can also make suggestions and rate matlab2tikz.
%
\begin{tikzpicture}

\begin{axis}[%
width=0.33\textwidth,
height=0.33\textwidth,
scale only axis,
xmode=log,
xmin=0.01,
xmax=0.5,
xminorticks=true,
xlabel style={font=\color{black}},
xlabel={$\Delta t $},
ymode=log,
ymin=1e-13,
ymax=1,
yminorticks=true,
ylabel style={font=\color{black}},
ylabel={$|(\bm e_{u}, \bm e_{v})|_{\calB}$},
axis background/.style={fill=white},
title ={$dG1$},
xmajorgrids,
xminorgrids,
ymajorgrids,
yminorgrids,
legend style={at={(0.65,0.05)}, anchor=south west, legend cell align=left, align=left, draw=black}
]
\addplot [color=blue, line width=2.0pt, mark=o, mark options={solid, blue}]
  table[row sep=crcr]{% 
     2.000000000000000e-01     3.582368442756189e-01\\
     1.000000000000000e-01     1.822532812825217e-01\\
     5.000000000000000e-02     8.418101062298784e-02\\
     2.000000000000000e-02     3.009151849749764e-02\\
};
\addlegendentry{$r=1$}

\addplot [color=red, line width=2.0pt, mark=asterisk, mark options={solid, red}]
  table[row sep=crcr]{%
     2.000000000000000e-01     3.766972051966913e-02\\
     1.000000000000000e-01     9.593043635578887e-03\\
     5.000000000000000e-02     2.213110206130020e-03\\
     2.000000000000000e-02     3.106172609821840e-04\\
};
\addlegendentry{$r=2$}

\addplot [color=green, line width=2.0pt, mark=square, mark options={solid, green}]
  table[row sep=crcr]{%
     2.000000000000000e-01     2.371968668634173e-03\\
     1.000000000000000e-01     2.784217353134650e-04\\
     5.000000000000000e-02     3.367611648059894e-05\\
     2.000000000000000e-02     2.036425937856025e-06\\
};
\addlegendentry{$r=3$}

\addplot [color=cyan, line width=2.0pt, mark=diamond, mark options={solid, cyan}]
  table[row sep=crcr]{%
     2.000000000000000e-01     1.311732403275686e-04\\
     1.000000000000000e-01     8.354456062754148e-06\\
     5.000000000000000e-02     5.273759745327901e-07\\
     2.000000000000000e-02     1.931122255433458e-07 \\
};
\addlegendentry{$r=4$}

\addplot [color=black, forget plot]
  table[row sep=crcr]{%
0.05	8.418101062298784e-02\\
0.02	2.129629834427772e-02\\
};

\addplot [color=black, forget plot]
  table[row sep=crcr]{%
0.05	2.129629834427772e-02\\
0.02	2.129629834427772e-02\\
};
\addplot [color=black, forget plot]
  table[row sep=crcr]{%
0.05	8.418101062298784e-02\\
0.05	2.129629834427772e-02\\
};

%% line 3
\addplot [color=black, forget plot]
  table[row sep=crcr]{%
0.05	2.213110206130020e-03 \\
0.02	2.239510068587391e-04\\
};
\addplot [color=black, forget plot]
  table[row sep=crcr]{%
0.05	2.239510068587391e-04\\
0.02	2.239510068587391e-04\\
};
\addplot [color=black, forget plot]
  table[row sep=crcr]{%
0.05	2.239510068587391e-04 \\
0.05	2.213110206130020e-03\\
};

\addplot [color=black, forget plot]
  table[row sep=crcr]{%
0.05	3.367611648059894e-05 \\
0.02	1.363113354596175e-06\\
};
\addplot [color=black, forget plot]
  table[row sep=crcr]{%
0.05	1.363113354596175e-06\\
0.02	1.363113354596175e-06\\
};
\addplot [color=black, forget plot]
  table[row sep=crcr]{%
0.05	1.363113354596175e-06\\
0.05	3.367611648059894e-05 \\
};

\addplot [color=black, forget plot]
  table[row sep=crcr]{%
0.05	5.273759745327901e-07\\
0.02	8.538671425405796e-09\\
};
\addplot [color=black, forget plot]
  table[row sep=crcr]{%
0.05	8.538671425405796e-09\\
0.02	8.538671425405796e-09\\
};
\addplot [color=black, forget plot]
  table[row sep=crcr]{%
0.05	8.538671425405796e-09\\
0.05	5.273759745327901e-07\\
};

\node[right, align=left, text=black, font=\normalsize]
at (axis cs:0.05,0.04) {$1.5$};

\node[right, align=left, text=black, font=\normalsize]
at (axis cs:0.05,0.0006) {$2.5$};

\node[right, align=left, text=black, font=\normalsize]
at (axis cs:0.05,0.000006) {$3.5$};

\node[right, align=left, text=black, font=\normalsize]
at (axis cs:0.05,0.00000007) {$4.5$};

\end{axis}

\end{tikzpicture}% 
    % This file was created by matlab2tikz.
%
%The latest updates can be retrieved from
%  http://www.mathworks.com/matlabcentral/fileexchange/22022-matlab2tikz-matlab2tikz
%where you can also make suggestions and rate matlab2tikz.
%
\begin{tikzpicture}

\begin{axis}[%
width=0.33\textwidth,
height=0.33\textwidth,
scale only axis,
xmode=log,
xmin=0.01,
xmax=0.5,
xminorticks=true,
xlabel style={font=\color{black}},
xlabel={$\Delta t $},
ymode=log,
ymin=1e-13,
ymax=1,
yminorticks=true,
ylabel style={font=\color{black}},
ylabel={$\|  \bm e_{u}(T) \|_0 $},
axis background/.style={fill=white},
title ={$dG1$},
xmajorgrids,
xminorgrids,
ymajorgrids,
yminorgrids,
legend style={at={(0.65,0.05)}, anchor=south west, legend cell align=left, align=left, draw=black}
]
\addplot [color=blue, line width=2.0pt, mark=o, mark options={solid, blue}]
  table[row sep=crcr]{% 
     2.000000000000000e-01     3.012172992468890e-02\\
     1.000000000000000e-01     9.578014669782664e-03\\
     5.000000000000000e-02     2.524502166449470e-03\\
     2.000000000000000e-02     3.918299428200966e-04\\
};
\addlegendentry{$r=1$}

\addplot [color=red, line width=2.0pt, mark=asterisk, mark options={solid, red}]
  table[row sep=crcr]{%
     2.000000000000000e-01     8.044944783835343e-04\\
     1.000000000000000e-01     5.678615712352405e-05\\
     5.000000000000000e-02     3.812874191386652e-06\\
     2.000000000000000e-02     1.133258797351962e-07\\
};
\addlegendentry{$r=2$}

\addplot [color=green, line width=2.0pt, mark=square, mark options={solid, green}]
  table[row sep=crcr]{%
     2.000000000000000e-01     2.147508067834767e-05\\
     1.000000000000000e-01     9.765976687311475e-07\\
     5.000000000000000e-02     4.700002506462467e-08\\
     2.000000000000000e-02     7.142513187908134e-10\\
};
\addlegendentry{$r=3$}

\addplot [color=cyan, line width=2.0pt, mark=diamond, mark options={solid, cyan}]
  table[row sep=crcr]{%
     2.000000000000000e-01     9.754264937192897e-07 \\
     1.000000000000000e-01     2.358522580459352e-08 \\
     5.000000000000000e-02     5.791214771070619e-10 \\
     2.000000000000000e-02     1.416316385045191e-10 \\
};
\addlegendentry{$r=4$}

\addplot [color=black, forget plot]
  table[row sep=crcr]{%
0.05	2.524502166449470e-03\\
0.02	2.554616577283146e-04\\
};

\addplot [color=black, forget plot]
  table[row sep=crcr]{%
0.05	2.554616577283146e-04\\
0.02	2.554616577283146e-04\\
};
\addplot [color=black, forget plot]
  table[row sep=crcr]{%
0.05	2.524502166449470e-03\\
0.05	2.554616577283146e-04\\
};

%% line 3
\addplot [color=black, forget plot]
  table[row sep=crcr]{%
0.05	3.812874191386652e-06  \\
0.02	1.543342960186186e-07\\
};
\addplot [color=black, forget plot]
  table[row sep=crcr]{%
0.05	1.543342960186186e-07\\
0.02	1.543342960186186e-07\\
};
\addplot [color=black, forget plot]
  table[row sep=crcr]{%
0.05	3.812874191386652e-06  \\
0.05	1.543342960186186e-07\\
};

\addplot [color=black, forget plot]
  table[row sep=crcr]{%
0.05	 4.700002506462467e-08\\
0.02	7.609709019607881e-10\\
};
\addplot [color=black, forget plot]
  table[row sep=crcr]{%
0.05	7.609709019607881e-10\\
0.02	7.609709019607881e-10\\
};
\addplot [color=black, forget plot]
  table[row sep=crcr]{%
0.05	 4.700002506462467e-08\\
0.05	7.609709019607881e-10\\
};

\addplot [color=black, forget plot]
  table[row sep=crcr]{%
0.05	5.791214771070619e-10\\
0.02	3.750590278818559e-12\\
};
\addplot [color=black, forget plot]
  table[row sep=crcr]{%
0.05	3.750590278818559e-12\\
0.02	3.750590278818559e-12\\
};
\addplot [color=black, forget plot]
  table[row sep=crcr]{%
0.05	5.791214771070619e-10\\
0.05	3.750590278818559e-12\\
};

\node[right, align=left, text=black, font=\normalsize]
at (axis cs:0.05,0.001) {$2.5$};

\node[right, align=left, text=black, font=\normalsize]
at (axis cs:0.05,0.000001) {$3.5$};

\node[right, align=left, text=black, font=\normalsize]
at (axis cs:0.05,0.00000001) {$4.5$};

\node[right, align=left, text=black, font=\normalsize]
at (axis cs:0.05,5.e-11) {$5.5$};

\end{axis}

\end{tikzpicture}% 
\caption{Test case of Section~\ref{sec::poro_example}. 
Computed errors $|(\bm e_{u}, \bm e_{ v})|_{\calB}$ (left) and $\|\bm e_{u}\|_{0}$ (right) for the \textit{dG1} as a function of the time step $\Delta t$ for different polynomial degrees $r=1,2,3,4$ {(semi-log scale)}. {The results have been obtained on a space mesh with} $N_{el}=300$ polygonal elements and a space polynomial degree equal to $6$.}
\label{fig:Energy_poro_conv_test_dt}
\end{figure}
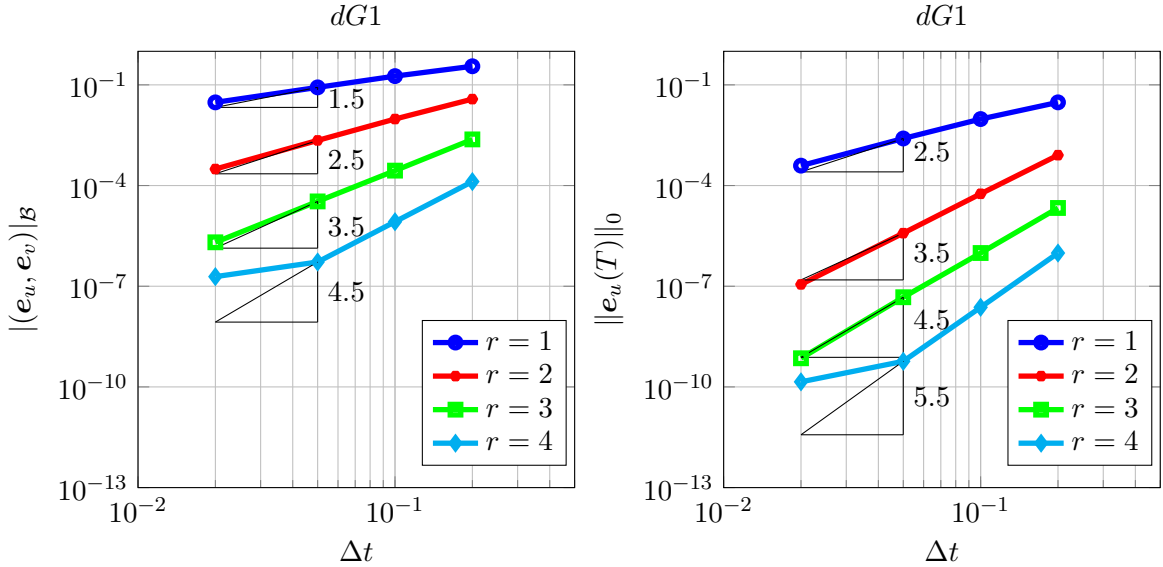
%%%%%%%%%%%%%%%%%%%%%%
%%%%%%%%%%%%%%%%%%%%%%
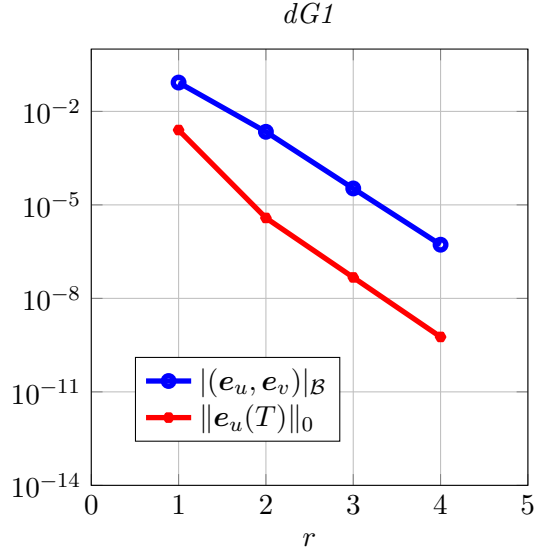
\begin{figure}[!htbp]
\centering
    \begin{tikzpicture}

\begin{axis}[%
width=0.33\textwidth,
height=0.33\textwidth,
scale only axis,
scale only axis,
xmin=0,
xmax=5,
xlabel style={font=\color{black}},
xlabel={$r$},
ymode=log,
ymin=1.e-14,
ymax=1,
yminorticks=true,
ylabel style={font=\color{black}},
title={\textit{dG1}},
axis background/.style={fill=white},
xmajorgrids,
ymajorgrids,
yminorgrids,
legend style={at={(0.1,0.1)}, anchor=south west, legend cell align=left, align=left, draw=black}
]
\addplot [color=blue, line width=2.0pt, mark=o, mark options={solid, blue}]
  table[row sep=crcr]{% 
1	8.418101062298784e-02\\
2	2.213110206130020e-03\\
3	3.367611648059894e-05\\
4   5.273759745327901e-07 \\
};
\addlegendentry{$|(\bm e_{u}, \bm e_{ v})|_{\calB} $}

\addplot [color=red, line width=2.0pt, mark=asterisk, mark options={solid, red}]
  table[row sep=crcr]{%
1	2.524502166449470e-03\\
2	3.812874191386652e-06\\
3	4.700002506462467e-08\\
4 5.791214771070619e-10 \\
};
\addlegendentry{$\|\bm e_{u}(T)\|_{0}$}

\end{axis}

\end{tikzpicture}% 
\caption{Test case of Section~\ref{sec::poro_example}. 
{Computed errors for the \textit{dG1}  method  as a function of the polynomial degree $r$ for $\Delta t = 0.05$ (semi-log scale)}. {The results have been obtained on a space mesh with} $N_{el}=300$ polygonal elements and a space polynomial degree equal to 
$6$.}\label{fig:poro_conv_r}
\end{figure}
%%%%%%%%%%%%%%%%%%%%%%
%%%%%%%%%%%%%%%%%%%%%%
%\FloatBarrier

\subsection{Wave propagation in a coupled poroelastic-acoustic domain}\label{sec:coupled-poro-acoustic} 
In our final applicative example, we consider a domain $\Omega$ composed of a poroelastic medium $\Omega_p$ and an acoustic medium $\Omega_a$, defined as $\Omega = \Omega_p \cup \Omega_a$. These domains share an interface boundary denoted as $\Gamma_I$.
The circular porous cylinder $\Omega_p$, is surrounded by the acoustic medium $\Omega_a$. We consider a circular interface $\Gamma_I$ of radius $100~m$ centered at $(0, 0)$ in the domain $\Omega = (-600,600)~m^2$, see Figure~\ref{fig:AcousticCavity-domain}, cf. \cite{chiavassa_lombard_2013}.
In $\Omega_a$ (resp. $\Omega_p$) we consider
\eqref{eq::acoustic} (resp. \eqref{eq::poroelasticity}) while on $\Gamma_I$
the following coupling conditions are imposed  
\begin{equation}\begin{cases} \label{eq:interface}
-\bm{\sigma}_p(\bm u_p,\bm u_f) \bm{n}_p  = \rho_a\dot{\varphi}_a\bm{n}_p & \textrm{ on }\Gamma_{I}  \times (0,T],    \\
p_p(\bm u_p , \bm u_f ) = \rho_a \dot{\varphi}_a
 & \textrm{ on }\Gamma_{I}  \times (0,T],  \\
-(\dot{\bm{u}_f}+\dot{\bm{u}}_f)\cdot\bm{n}_p = \nabla\varphi_a\cdot\bm{n}_p& \textrm{ on }\Gamma_{I}  \times (0,T],
\end{cases}\end{equation}
expressing the continuity of normal stresses, continuity of pressure, and conservation of mass, respectively. We refer the reader to \cite{ABM_Vietnam} for a detailed problem description. As explained in \cite{AntoniettiMazzieriNatipoltri2021}, space discretization of problem \eqref{eq::acoustic}-\eqref{eq::poroelasticity}-\eqref{eq:interface} with a PolydG method leads to a system of ordinary differential equations of the form \eqref{Eq:SecondOrderEquation} which we integrate in time using the \textit{dG1} method in Section~\ref{sec::dg_form_II}. 
%%%%%%%%%
%%%%%%%%%%%
\begin{figure}[!htbp]
    \centering
    \includegraphics[width=0.52\textwidth]{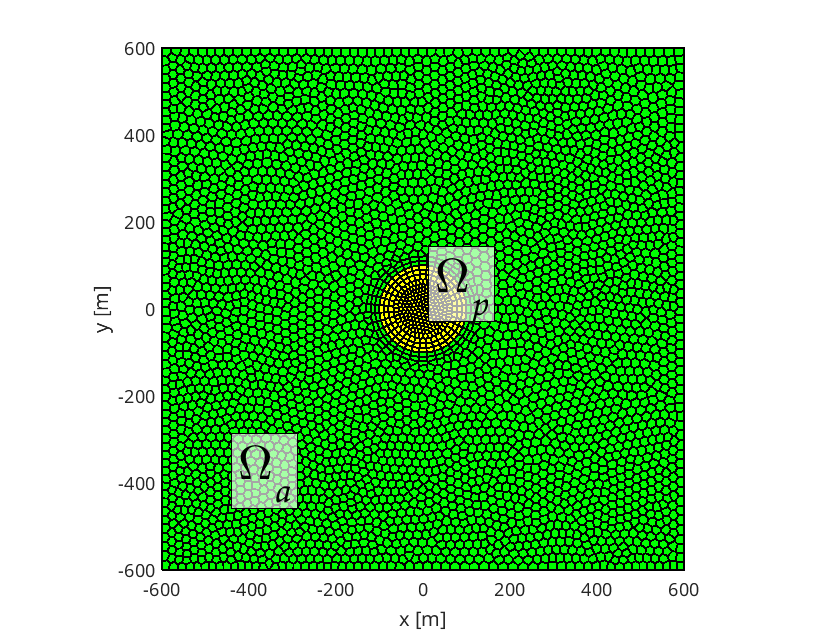}
    \includegraphics[width=0.47\textwidth]{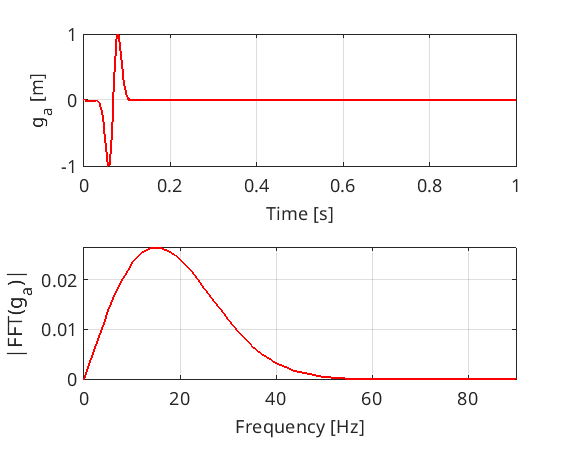}
    \caption{Test case of Section~\ref{sec:coupled-poro-acoustic}. Left: circular porous domain $\Omega_p$ (yellow) surrounded by an acoustic medium $\Omega_a$ (green). The mesh is made by $4287$ polygonal elements divided into $3979$ for $\Omega_a$ ($h\approx 36.5~m$) and $308$ for $\Omega_p$ ($h\approx 11.8~m$). Right: Dirichlet boundary condition applied on the domain's bottom edge (top), and the absolute value of its Fourier transform (bottom).}
    \label{fig:AcousticCavity-domain}
\end{figure}
%%%%%%
%%%%%%

The computational domain $\Omega$ is discretized by 4287
polygonal elements having a characteristic size $h\approx 36~m$ and a space polynomial degree $p=4$ is set. A locally refined mesh is employed around the interface to accurately compute the  wave conversions, cf. Figure~\ref{fig:AcousticCavity-domain}.
The source is an acoustic plane wave given as a Dirichlet condition on the bottom boundary $(-600,600)\times \{-600\}$, cf. Figure ~\ref{fig:AcousticCavity-domain} (top-right).
The pulse is a Gaussian wavelet 
\begin{equation}\label{eq:gaussian_pulse}
g_a(t) = 2\pi f_p \sqrt{e}(t - 1/f_p)e^{-2(\pi f_p)^2 (t - 1/f_p)^2},
\end{equation}
having a peak frequency $f_p = 15~Hz$, \textcolor{black}{cf. Figure~\ref{fig:AcousticCavity-domain} (bottom-right)}. 
A sound soft condition is enforced on the remaining boundaries, i.e., $\nabla \varphi_a \cdot \bm n = 0$. The physical parameters for the test case are listed in Table \ref{tab::table_poroacoustic}
%%%%%%%%%%%%%%%%%%
%%%%%%%%%%%%%%%%%
%\begin{figure}[!htbp]
%    \centering
%    \includegraphics[width=0.35\textwidth]{AcousticCavity400.png}
%    \includegraphics[width=0.35\textwidth]{AcousticCavity500.png}
%    \includegraphics[width=0.35\textwidth]{AcousticCavity600.png}
%    \includegraphics[width=0.35\textwidth]{AcousticCavity700.png}    
%    \caption{Test case of Section ~\ref{sec:coupled-poro-acoustic}. Snapshots of the computed pressure field at different time instants: $t=0.4~s$ (top-left), $t=0.5~s$ (top-right), $t=0.6~s$ (bottom-left), and $t=0.7~s$ (bottom-right).}
%    \label{fig:snapshot_acousticcavity}
%\end{figure}
%%%%%%%%
%%%%%%%%

In Figure~\ref{fig:snapshot_acousticcavity_15Hz} we plot the computed pressure field ($p_a = \rho_a \dot{\varphi}_a$ in $\Omega_a$ and $p_p = -m(\beta \nabla\cdot\bm{u}_p+\nabla\cdot \bm{u}_f)$ in $\Omega_p$) at different time instants, considering a final time $T=1~s$, a time step $\Delta t = 0.001~s$,
and a time polynomial degree equal to $2$, resulting in a system matrix $\widehat{M}_n$  of dimension 234495, cf. \eqref{eq::linear_system_dGII}. 
It is possible to see the wave moves from the bottom to the top of the domain, impacting the porous cylinder, and producing a scattered circular wavefield directed backward. This agrees with the results presented in \cite{chiavassa_lombard_2013}. 
%Finally, we repeat the same numerical test by selecting a Gaussian pulse \eqref{eq:gaussian_pulse} having peak frequency $f_p = 15~Hz$. In Figure~\ref{fig:snapshot_acousticcavity_15Hz}, a significant interaction between the impacting wave and the circular cylinder is clearly evident, as indicated by the shorter wavelength of the latter.
%%%%%%%%%%%%%%%%%%%%
\begin{table}[htbp]
\centering
\begin{tabular}{llllll}
\cline{1-6}
\textbf{Fluid}  & Fluid density      & $\rho_f$    & 1000    & $\rm kg/m^3$    &  \\
                &                    & $\rho_a$    & 1000   & $\rm kg/m^3$    &  \\
                & Wave velocity      & $c$         & 1500   & $\rm m/s$       &  \\
                & Dynamic viscosity  & $\eta$      & $1.05\cdot 10^{-3}$      & $\rm Pa\cdot s$ &  \\ \cline{1-6}
\textbf{Grain}  & Solid density      & $\rho_s$    & 2690   & $\rm kg/m^3$    &  \\
                & Shear modulus      & $\mu$       & 1.86 $\cdot 10^9$      & $\rm Pa$   &  \\ \cline{1-6}
\textbf{Matrix} & Porosity           & $\phi$      & 0.38    &             &  \\
                & Tortuosity         & $a$         & 1.8      &             &  \\
& Permeability  & $k$                & $2.79\cdot 10^{-11}$    & $\rm m^2$       &  \\
& Lam\'e coefficient   & $\lambda$ & 1.20$\cdot 10^8$ & $\rm Pa$        &  \\
& Biot's coefficient & $m$         & 5.34$\cdot 10^9$ & $\rm Pa$        &  \\
& Biot's coefficient & $\beta$     & 0.95             &             &  \\  \cline{1-6}
\end{tabular}
\caption{Test case of Section~\ref{sec:coupled-poro-acoustic}. Physical parameters for poro-elastic media.}
\label{tab::table_poroacoustic}
\end{table}
%%%%%%%%%%%%
%%%%%%%%%%%%
\begin{figure}[!htbp]
    \centering
    \includegraphics[width=0.35\textwidth]{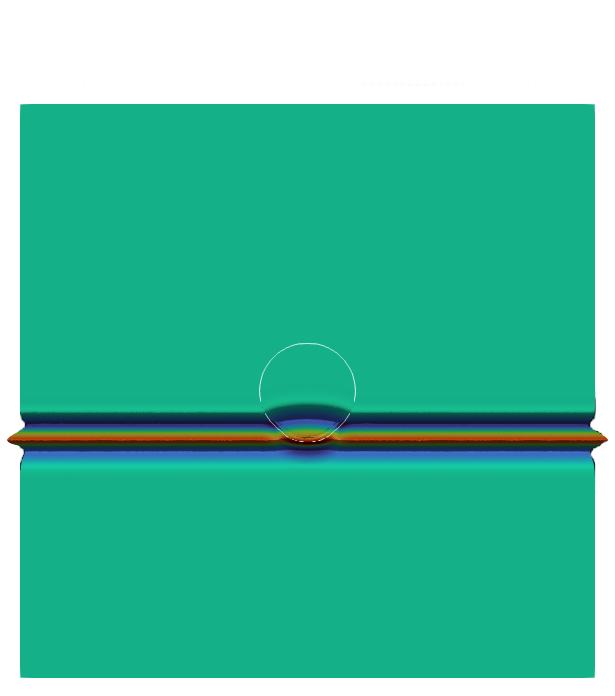}
    \includegraphics[width=0.35\textwidth]{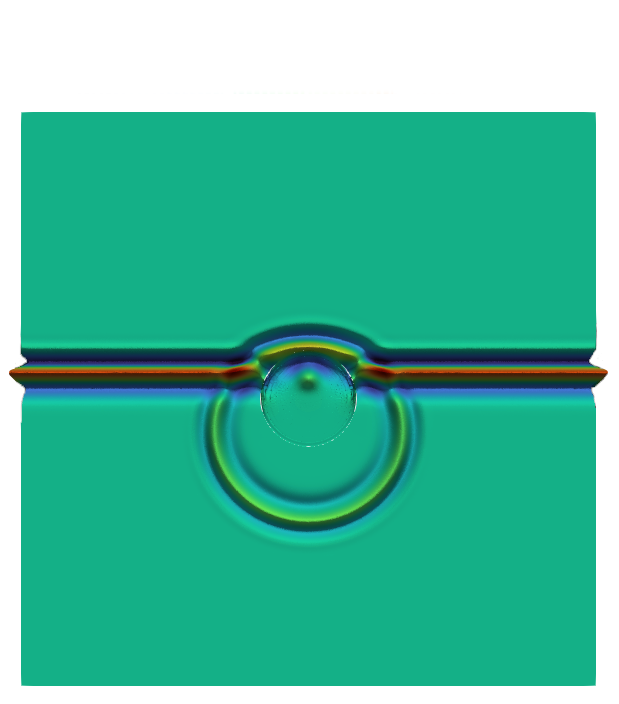}
    \includegraphics[width=0.35\textwidth]{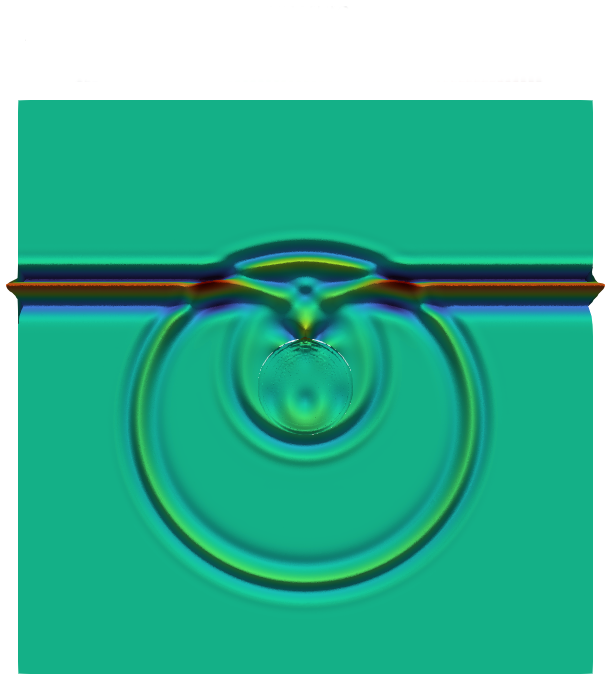}
    \includegraphics[width=0.35\textwidth]{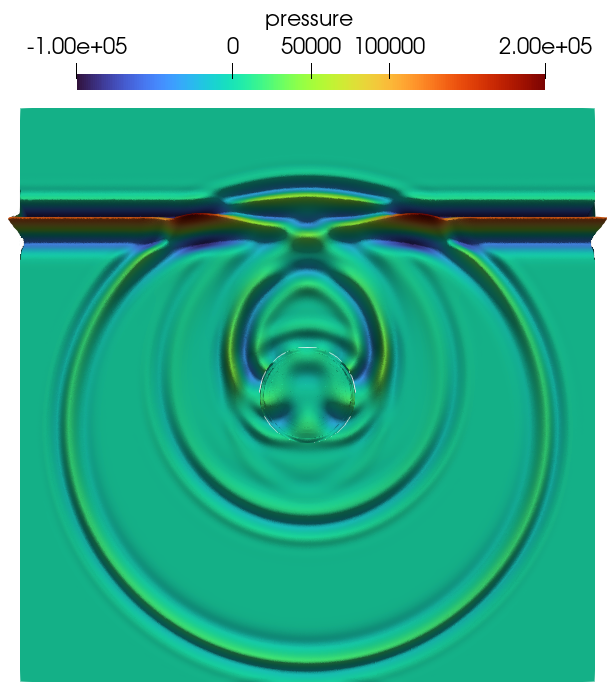}    
    \caption{Test case of Section ~\ref{sec:coupled-poro-acoustic}. Snapshots of the computed pressure field at different time instants: $t=0.4~s$ (top-left), $t=0.5~s$ (top-right), $t=0.6~s$ (bottom-left), and $t=0.7~s$ (bottom-right).}
    \label{fig:snapshot_acousticcavity_15Hz}
\end{figure}
%\FloatBarrier

\section{Conclusions}
In this chapter, we reviewed and numerically compared two discontinuous Galerkin time-integration methods for the approximate solution of second-order differential systems stemming from space discretization of wave-type problems. We presented the two formulations within a generalized framework and discussed their stability, accuracy, and computational efficiency. Our numerical study suggested that the formulation proposed for the first-order system outperforms that for the second-order system in terms of both accuracy and computational efficiency. Notably, regarding the latter, it was demonstrated that the matrix of the linear system resulting from the discretization seems to exhibit a better condition number. Additionally, employing an appropriate solution strategy helps to reduce the computational expenses associated with incorporating a new variable into the model. To support our conclusions, we presented a wide set of numerical examples, including wave propagation problems in acoustic, poroelastic, and coupled acoustic-poroelastic media.

\section*{Aknowledgement}
PFA and IM have been partially supported by ICSC--Centro Nazionale di Ricerca in High Performance Computing, Big Data, and Quantum Computing funded by European Union--NextGenerationEU.
The work of GC, and IM has been partially supported by the PRIN2022 grant ASTICE - CUP:
D53D23005710006. The present research is  part of the activities of "Dipartimento di Eccellenza 2023-2027". PFA,  GC, and IM are members of INdAM-GNCS. 
This work is partially funded by the European Union (ERC SyG, NEMESIS, project number 101115663). Views and opinions expressed are however those of the authors only and do not necessarily reflect those of the European Union or the European Research Council Executive Agency.
\textcolor{black}{AA is currently at Leonardo S.p.A., but the research presented here was conducted while AA was at Politecnico di Milano.
}


\begin{thebibliography}{10}

\bibitem{AdamsFournier2003}
R.~A. Adams and J.~J.~F. Fournier.
\newblock {\em Sobolev spaces}, volume 140 of {\em Pure and Applied
  Mathematics}.
\newblock Elsevier, Amsterdam, second edition, 2003.

\bibitem{ADJERID2011}
S.~Adjerid and H.~Temimi.
\newblock A discontinuous {G}alerkin method for the wave equation.
\newblock {\em Comput. Methods Appl. Mech. Eng.}, 200(5):837 -- 849, 2011.

\bibitem{AntoniettiBonaldi2020}
P.~F. Antonietti, F.~Bonaldi, and I.~Mazzieri.
\newblock A high-order discontinuous {G}alerkin approach to the elasto-acoustic
  problem.
\newblock {\em Comput. Methods Appl. Mech. Eng.}, 358, 2020.

\bibitem{antonietti2024lymph}
P.~F. Antonietti, S.~Bonetti, M.~Botti, M.~Corti, I.~Fumagalli, and
  I.~Mazzieri.
\newblock lymph: discontinuous po{LY}topal methods for {M}ulti-{PH}ysics
  differential problems.
\newblock {\em ACM Trans. Math. Softw.}, 2025.

\bibitem{ABM_Vietnam}
P.~F. Antonietti, M.~Botti, and I.~Mazzieri.
\newblock On mathematical and numerical modelling of multiphysics wave
  propagation with polytopal discontinuous {G}alerkin methods: a review.
\newblock {\em Vietnam J. Math.}, 50:997--1028, 2022.

\bibitem{AntoniettiMazzieriNatipoltri2021}
P.~F. Antonietti, M.~Botti, I.~Mazzieri, and S.~N. Poltri.
\newblock A high-order discontinuous {G}alerkin method for the
  poro-elasto-acoustic problem on polygonal and polyhedral grids.
\newblock {\em SIAM J. Sci. Comput.}, 44(1):B1--B28, 2022.

\bibitem{DalSanto2018}
P.~F. Antonietti, N.~Dal~Santo, I.~Mazzieri, and A.~Quarteroni.
\newblock A high-order discontinuous {G}alerkin approximation to ordinary
  differential equations with applications to elastodynamics.
\newblock {\em IMA J. Numer. Anal.}, 38(4):1709--1734, 2018.

\bibitem{AnMaMi20}
P.~F. Antonietti, I.~Mazzieri, and F.~Migliorini.
\newblock A space-time discontinuous {G}alerkin method for the elastic wave
  equation.
\newblock {\em J. Comput. Phys.}, 419:109685, 2020.

\bibitem{AntoniettiMiglioriniMazzieri2021}
P.~F. Antonietti, I.~Mazzieri, and F.~Migliorini.
\newblock A discontinuous {G}alerkin time integration scheme for second order
  differential equations with applications to seismic wave propagation
  problems.
\newblock {\em Comput. Math. Appl.}, 134:87--100, 2023.

\bibitem{arnold1982interior}
D.~N. Arnold.
\newblock An interior penalty finite element method with discontinuous
  elements.
\newblock {\em SIAM {J}. Numer. Anal.}, 19(4):742--760, 1982.

\bibitem{BaGeLi17}
L.~Banjai, E.~H. Georgoulis, and O.~Lijoka.
\newblock A {T}refftz polynomial space-time discontinuous {G}alerkin method for
  the second order wave equation.
\newblock {\em SIAM J. Numer. Anal.}, 55:63--86, 2017.

\bibitem{2021_BansalMoiolaPerugiaSchwab}
P.~Bansal, A.~Moiola, I.~Perugia, and C.~Schwab.
\newblock Space-time discontinuous {G}alerkin approximation of acoustic waves
  with point singularities.
\newblock {\em IMA J. Numer. Anal.}, 41(3):2056 – 2109, 2021.

\bibitem{BaCaDiSh18}
H.~Barucq, H.~Calandra, J.~Diaz, and E.~Shishenina.
\newblock Space–time {T}refftz-dg approximation for elasto-acoustics.
\newblock {\em Appl. Anal.J. Sci. Comput.}, 99:747--760, 2018.

\bibitem{biot1}
M.~A. Biot.
\newblock Theory of {P}ropagation of {E}lastic {W}aves in a
  {F}luid‐{S}aturated {P}orous {S}olid. {I}. {L}ow‐{F}requency {R}ange.
\newblock {\em J. Acoust. Soc. Am.}, 28:168--178, 1956.

\bibitem{Bu08}
J.~C. Butcher.
\newblock {\em Numerical Methods for Ordinary Differential Equations}.
\newblock Wiley, 2008.

\bibitem{2017_CangianiDongGeorgoulis}
A.~Cangiani, Z.~Dong, and E.~H. Georgoulis.
\newblock Hp-version space-time discontinuous {G}alerkin methods for parabolic
  problems on prismatic meshes.
\newblock {\em SIAM J. Sci. Comput.}, 39(4):A1251 – A1279, 2017.

\bibitem{2020_CangianiGeorgoulisSabawi}
A.~Cangiani, E.~H. Georgoulis, and M.~Sabawi.
\newblock A posteriori error analysis for implicit–explicit hp-discontinuous
  {G}alerkin timestepping methods for semilinear parabolic problems.
\newblock {\em J. Sci. Comput.}, 82(2), 2020.

\bibitem{chiavassa_lombard_2013}
G.~Chiavassa and B.~Lombard.
\newblock Wave propagation across acoustic/{B}iot's media: A finite-difference
  method.
\newblock {\em Commun. Comput. Phys.}, 13(4):985--1012, 2013.

\bibitem{CiaramellaGanderMazzieri2025}
G.~Ciaramella, J.~M. Gander, and I.~Mazzieri.
\newblock Discontinuous {G}alerkin time integration for second-order
  differential problems: Formulations, analysis, and analogies.
\newblock {\em submitted}, 2025.

\bibitem{CockKarnShu00}
B.~Cockburn, G.~E. Karniadakis, and C.-W. Shu, editors.
\newblock {\em Discontinuous {G}alerkin Methods. Theory, computation and
  applications.}
\newblock Lecture Notes in Computational Science and Engineering (Book 11).
  Springer, Berlin, 2000.

\bibitem{DiPiEr}
D.~A. {Di Pietro} and A.~Ern.
\newblock {\em Mathematical aspects of {D}iscontinuous {G}alerkin methods}.
\newblock Springer, 2011.

\bibitem{DiGr09}
J.~Diaz and M.~J. Grote.
\newblock Energy conserving explicit local time stepping for second-order wave
  equations.
\newblock {\em SIAM J. Sci. Comput.}, 31(3):1985--2014, 2009.

\bibitem{2024_GomezMascottoMoiolaPerugia}
S.~Gomez, L.~Mascotto, A.~Moiola, and I.~Perugia.
\newblock Space-time virtual elements for the heat equation.
\newblock {\em SIAM J. Numer. Anal.}, 62(1):199 – 228, 2024.

\bibitem{GoScWi17}
J.~Gopalakrishnan, J.~Sch\"{o}berl, and C.~Wintersteiger.
\newblock Mapped tent pitching schemes for hyperbolic systems.
\newblock {\em Comp. Met. Appl. Sci. Eng}, 39:B1043--B1063, 2017.

\bibitem{GrMi13}
M.~J. Grote and T.~Mitkova.
\newblock High-order explicit local time-stepping methods for damped wave
  equations.
\newblock {\em J. Comput. Appl. Math.}, 239:270--289, 2013.

\bibitem{2024_GomezMasscottoPerugia}
S.~Gómez, L.~Mascotto, and I.~Perugia.
\newblock Design and performance of a space–time virtual element method for
  the heat equation on prismatic meshes.
\newblock {\em Comput. Methods Appl. Mech. Eng.}, 418, 2024.

\bibitem{ThHe2005}
D.~He and L.~L. Thompson.
\newblock {A}daptive space–time finite element methods for the wave equation
  on unbounded domains.
\newblock {\em Comput. Methods Appl. Mech. Eng.}, 194:1947–2000, 2005.

\bibitem{HestWar}
J.~S. Hesthaven and T.~Warburton.
\newblock {\em Nodal Discontinuous {G}alerkin Methods: Algorithms, Analysis,
  and Applications}.
\newblock Springer Publishing Company, Incorporated, 1st edition, 2007.

\bibitem{Hochbruck2016}
M.~Hochbruck and A.~Sturm.
\newblock Error analysis of a second-order locally implicit method for linear
  {M}axwell's equations.
\newblock {\em SIAM J. Numer. Anal.}, 54(5):3167--3191, 2016.

\bibitem{HoScSu00}
P.~Houston, C.~Schwab, and E.~Suli.
\newblock Stabilized $hp$-finite element methods for first-order hyperbolic
  problems.
\newblock {\em SIAM J. Numer. Anal.}, 37(5):1618--1643, 2000.

\bibitem{Hughes88}
T.~Hughes and G.~Hulbert.
\newblock Space-time finite element methods for elastodynamics: formulation and
  error estimates.
\newblock {\em Comput. Methods Appl. Mech. Eng.}, 66:339--363, 1988.

\bibitem{HULBERT1990}
T.~J. Hughes and G.~M. Hulbert.
\newblock Space-time finite element methods for second-order hyperbolic
  equations.
\newblock {\em Comput. Methods Appl. Mech. Eng.}, 84(3):327 -- 348, 1990.

\bibitem{Idesman07}
A.~Idesman.
\newblock Solution of linear elastodynamics problems with space–time finite
  elements on structured and unstructured meshes.
\newblock {\em Comput. Methods Appl. Mech. Eng.}, 196:1787--1815, 2007.

\bibitem{2023_ImbertMoiolaStocker}
L.-M. Imbert-Gérard, A.~Moiola, and P.~Stocker.
\newblock A space–time quasi-{T}refftz d{G} method for the wave equation with
  piecewise-smooth coefficients.
\newblock {\em Math. Comput.}, 92(341):1211 – 1249, 2023.

\bibitem{JOHNSON1993}
C.~Johnson.
\newblock Discontinuous {G}alerkin finite element methods for second order
  hyperbolic problems.
\newblock {\em Comput. Methods Appl. Mech. Eng.}, 107(1):117 -- 129, 1993.

\bibitem{2016_KretzschmarMoiolaPerugiaSchnepp}
F.~Kretzschmar, A.~Moiola, I.~Perugia, and S.~M. Schnepp.
\newblock A priori error analysis of space-time trefftz discontinuous
  {G}alerkin methods for wave problems.
\newblock {\em IMA J. Numer. Anal.}, 36(4):1599 – 1635, 2016.

\bibitem{kroopnick}
A.~Kroopnick.
\newblock Bounded and ${L}^2$-solutions to a second order nonlinear
  differential equation with a square integrable forcing term.
\newblock {\em Int. J. Math. Math. Sci.}, 22(3):569--571, 1999.

\bibitem{Ve07}
R.~J. Le~Veque.
\newblock {\em Finite Difference Methods for Ordinary and Partial Differential
  Equations}.
\newblock SIAM - Society for Industrial and Applied Mathematics, 2007.

\bibitem{Lesaint74}
P.~Lesaint and P.~Raviart.
\newblock On a finite element method for solving the neutron transport
  equation.
\newblock {\em Publications des s\'{e}minaires de math\'{e}matiques et
  informatique de Rennes}, 1974.

\bibitem{2018_MoiolaPerugia}
A.~Moiola and I.~Perugia.
\newblock A space–time trefftz discontinuous {G}alerkin method for the
  acoustic wave equation in first-order formulation.
\newblock {\em Numer. Math.}, 138(2):389 – 435, 2018.

\bibitem{MoRi05}
P.~Monk and G.~R. Richter.
\newblock A discontinuous {G}alerkin method for linear symmetric hyperbolic
  systems in inhomogeneous media.
\newblock {\em J. Sci. Comput.}, 22:443--447, 2005.

\bibitem{2020_PerugiaSchoberlStockerWintersteiger}
I.~Perugia, J.~Schöberl, P.~Stocker, and C.~Wintersteiger.
\newblock Tent pitching and trefftz-dg method for the acoustic wave equation.
\newblock {\em Comput. Math. Appl.}, 79(10):2987 – 3000, 2020.

\bibitem{2009_PetersenFarhatTezaur}
S.~Petersen, C.~Farhat, and R.~Tezaur.
\newblock A space-time discontinuous {G}alerkin method for the solution of the
  wave equation in the time domain.
\newblock {\em IInt. J. Numer. Methods Eng.}, 78(3):275 – 295, 2009.

\bibitem{QuSaSa07}
A.~Quarteroni, R.~Sacco, and F.~Saleri.
\newblock {\em {N}umerical {M}athematics}.
\newblock Springer, 2007.

\bibitem{ReedHill73}
W.~Reed and T.~Hill.
\newblock Triangular mesh methods for the neutron transport equation.
\newblock Technical Report LA-UR-73-479, Los Alamos Scientific Laboratory,
  1973.

\bibitem{rezaei2023discontinuous}
N.~Rezaei and F.~Saedpanah.
\newblock Discontinuous {G}alerkin for the wave equation: a simplified a priori
  error analysis.
\newblock {\em Int. J. Comput. Math.}, 100(3):546--571, 2023.

\bibitem{riviere2008discontinuous}
B.~Rivi{\`e}re.
\newblock {\em {D}iscontinuous {G}alerkin methods for solving elliptic and
  parabolic equations: theory and implementation}.
\newblock Society for Industrial and Applied Mathematics, 2008.

\bibitem{ScSc2000}
D.~Sch{\"o}tzau and C.~Schwab.
\newblock Time discretization of parabolic problems by the $hp$-version of the
  discontinuous {G}alerkin finite element method.
\newblock {\em SIAM J. Numer. Anal.}, 38(3):837--875, 2000.

\bibitem{2022_ShuklaVanderVegt}
P.~Shukla and J.~van~der Vegt.
\newblock A space-time interior penalty discontinuous {G}alerkin method for the
  wave equation.
\newblock {\em Commun. Appl. Math.}, 4(3):904 – 944, 2022.

\bibitem{Tezduyar06}
T.~Tezduyar, S.~Sathe, R.~Keedy, and K.~Stein.
\newblock Space–time finite element techniques for computation of
  fluid–structure interactions.
\newblock {\em Comput. Methods Appl. Mech. Eng.}, 195:2002--2027, 2006.

\bibitem{wheeler1978elliptic}
M.~F. Wheeler.
\newblock {A}n elliptic collocation--finite element method with interior
  penalties.
\newblock {\em SIAM J. Numer. Anal.}, 15(1):152--161, 1978.

\end{thebibliography}
\end{document}